\documentclass[preprint,12pt]{elsarticle}
\usepackage{float}
\usepackage{amsmath,amssymb,amsfonts} 
\usepackage{amsthm}                  
\usepackage{graphicx}                
\usepackage{booktabs}                
\usepackage{xcolor}                  
\usepackage{geometry}                
\usepackage{lipsum}

\usepackage{graphicx}
\usepackage{subcaption}

\usepackage{booktabs}
\usepackage{multirow}

\usepackage{algorithm}
\usepackage{algorithmicx}
\usepackage{algpseudocode}

\usepackage{float}
\usepackage{multirow}
\usepackage{mathrsfs}
\usepackage[title]{appendix}
\usepackage{textcomp}
\usepackage{manyfoot}
\usepackage{listings}
\usepackage{bm}
\usepackage[unicode]{hyperref}
\hypersetup{
    colorlinks=true,
    linkcolor=blue,
    filecolor=magenta,      
    urlcolor=cyan,
    pdftitle={Memory-Type Null Controllability of Heat Equations with Delay Effects},
    pdfauthor={Dev Prakash Jha, Raju K. George},
    pdfsubject={Controllability of Evolution Equations},
    pdfkeywords={Evolution equation, Carleman estimates, null controllability},
}
\usepackage{cleveref}

\theoremstyle{plain}
\newtheorem{theorem}{Theorem}[section]

\newtheorem{corollary}[theorem]{Corollary}
\newtheorem{lemma}[theorem]{Lemma}
\newtheorem{assumption}[theorem]{Assumption}

\theoremstyle{remark}

\theoremstyle{definition}

\newtheorem{remark}{Remark}[section]

\newtheorem{definition}{Definition}[section]

\begin{document}

\begin{frontmatter}

\title{Approximation and Controllability of Nonlinear Control-Affine Systems via Semiautonomous Neural Ordinary Differential Equations}

\author[inst1]{\texorpdfstring{Dev Prakash Jha\corref{cor1}}{Dev Prakash Jha}}
\ead{devprakash.22@res.iist.ac.in}

\author[inst1]{Raju K. George\corref{cor1}}
\ead{george@iist.ac.in}

\cortext[cor1]{Corresponding author}

\address[inst1]{Department of Mathematics, Indian Institute of Space Science and Technology, Valiamala P.O., Thiruvananthapuram 695547, Kerala, India}

\begin{abstract}
		In this paper, we introduce controlled semiautonomous neural ordinary
	differential equations (controlled SA-NODEs) for the approximation and
	learning of nonlinear controlled dynamical systems. The proposed framework
	extends semiautonomous neural ODEs to control-affine systems while
	preserving reduced parameter complexity through time-independent trainable
	coefficients. We establish a universal approximation theorem showing that
	controlled SA-NODEs approximate trajectories of nonlinear controlled
	systems uniformly on compact sets of initial conditions and admissible
	controls. Under additional Sobolev and Barron regularity assumptions, we
	derive quantitative approximation estimates of order
	$\mathcal{O}(P^{-1/2}+Q^{-1/2})$. We further prove that approximate
	controllability properties of the original nonlinear system are preserved
	under the controlled SA-NODE approximation. Numerical experiments on
	controlled pendulum and Duffing oscillator systems demonstrate that the
	proposed framework achieves accurate trajectory reconstruction and
	controllability performance with significantly fewer trainable parameters
	than classical neural ODE architectures.
	
	\medskip
	
	\noindent
	\textbf{Keywords:}
	Degenerate parabolic equations; memory-type null controllability; Carleman estimates; moving controls; non-autonomous diffusion; Volterra memory terms.
	
	\medskip
	
	\noindent
	\textbf{Mathematics Subject Classification (2020):}
	93B05, 35K65, 35R11, 93C20.
	
\end{abstract}

\end{frontmatter}

\section{Introduction}
\label{se:Intro}
\subsection{Neural ordinary differential equations}
Neural ordinary differential equations (NODEs), introduced in \cite{chen2018neural}, provide a continuous-time framework for deep learning architectures and dynamical systems. NODEs arise naturally from the interpretation of residual neural networks (ResNets) as discretizations of ordinary differential equations and have become an important bridge between machine learning, dynamical systems, and control theory. In this framework, the evolution of a state trajectory is governed by a neural vector field parameterized by trainable coefficients.
The classical NODE architecture is given by
\begin{equation}
	\left\{
	\begin{aligned}
		\dot{x}(t)
		&=
		\sum_{i=1}^{P}
		W_i(t)
		\circ
		\sigma\bigl(
		A_i(t)x(t)+B_i(t)
		\bigr),
		\\
		x(0)&=x_0,
	\end{aligned}
	\right.
	\label{eq:vanillaNODE}
\end{equation}
where $x(t)\in\mathbb{R}^{d}$ denotes the state variable,
$W_i(t)\in\mathbb{R}^{d}$,
$A_i(t)\in\mathbb{R}^{d\times d}$,
$B_i(t)\in\mathbb{R}^{d}$,
$\sigma$ is the activation function, and $\circ$ denotes the Hadamard product.
NODEs have attracted considerable attention due to their remarkable flexibility in approximating nonlinear dynamical systems, irregular time-series data, transport equations, conservation laws, and scientific machine learning models; see
\cite{chen2018neural,weinan2017proposal,dupont2019augmented,ruthotto2020deep}.
From the viewpoint of dynamical systems and optimal control, NODEs can also be interpreted as controlled flows evolving in continuous time \cite{haber2017stable,ottobre2019introduction,cheng2025interpolation}.
This perspective has stimulated intensive research on controllability, approximation, stability, and generalization properties of neural differential equations; see
\cite{tabuada2022universal,li2022approximation,alvarez2026constructive,bloch2023control}.

Despite their flexibility, classical NODE architectures involve a large number of trainable parameters because the coefficients depend explicitly on time. In practical implementations, this often requires storing different network parameters at every discretized time step, leading to increased memory costs and computational complexity. Furthermore, optimization procedures become increasingly expensive as the temporal discretization is refined.

To reduce this complexity while preserving strong approximation capabilities, semiautonomous neural ordinary differential equations (SA-NODEs) were recently introduced in \cite{li2026universal}. The corresponding architecture takes the form
\begin{equation}
	\left\{
	\begin{aligned}
		\dot{x}(t)
		&=
		\sum_{i=1}^{P}
		W_i
		\circ
		\sigma\bigl(
		A_i^{1}x(t)
		+
		A_i^{2}t
		+
		B_i
		\bigr),
		\\
		x(0)&=x_0,
	\end{aligned}
	\right.
	\label{eq:SANODE}
\end{equation}
where all trainable parameters are time independent and the temporal variable appears only inside the activation function.
The structure of SA-NODEs is motivated by the universal approximation theorem for shallow neural networks. More precisely, the vector field $f(x,t)$ is approximated by a shallow neural network depending jointly on the state and time variables. This construction preserves strong approximation properties while significantly reducing the parameter complexity of the model. Recent theoretical works have shown that SA-NODEs possess universal approximation properties for nonlinear dynamical systems and transport equations, together with quantitative approximation estimates in Sobolev and Barron spaces \cite{li2026universal,alvarez2026constructive}.
Moreover, from the viewpoint of controllability and expressivity, neural ODEs have recently been investigated using tools from geometric control theory and nonlinear dynamics. Approximate and exact interpolation properties, controllability of neural flows, simultaneous cell controllability, and generalization mechanisms based on transport of macroscopic regions have been studied in
\cite{cheng2025interpolation,tabuada2022universal,alvarez2026constructive,dupuis2023controllability}.
These developments reveal deep connections between neural differential equations, optimal transport, and nonlinear control systems.

\subsection{Controlled semiautonomous neural ordinary differential equations}
Motivated by the previous discussion, the objective of the present work is to extend semiautonomous neural ordinary differential equations to nonlinear controlled dynamical systems.

Many systems arising in engineering, robotics, mechanics, aerospace dynamics, and feedback control can be represented in the control-affine form
\begin{equation}
	\dot{z}(t)
	=
	f(z(t),t)
	+
	G(z(t),t)u(t),
	\label{eq:controlaffine}
\end{equation}
where $u(t)$ denotes an external control input, $f$ is the drift vector field, and $G$ represents the control operator. Such systems play a central role in nonlinear control theory, stabilization, trajectory tracking, and optimal control problems \cite{sontag1998mathematical,coron2007control,isidori1995nonlinear}.

To approximate systems of the form \eqref{eq:controlaffine}, we introduce a controlled SA-NODE architecture of the form
\begin{equation}
	\left\{
	\begin{aligned}
		\dot{y}(t)
		&=
		f_{\Theta}(y(t),t)
		+
		G_{\Phi}(y(t),t)u(t),
		\\
		y(0)&=y_0,
	\end{aligned}
	\right.
	\label{eq:controlledSANODE}
\end{equation}
where the drift neural field is defined by
\begin{equation}
	f_{\Theta}(y,t)
	=
	\sum_{i=1}^{P}
	W_i
	\circ
	\sigma\bigl(
	A_i^{1}y
	+
	A_i^{2}t
	+
	B_i
	\bigr),
	\label{eq:driftnetwork}
\end{equation}
and the control operator is approximated by
\begin{equation}
	G_{\Phi}(y,t)
	=
	\sum_{j=1}^{Q}
	\widetilde{W}_j
	\circ
	\sigma\bigl(
	\widetilde{A}_j^{1}y
	+
	\widetilde{A}_j^{2}t
	+
	\widetilde{B}_j
	\bigr).
	\label{eq:controlnetwork}
\end{equation}
The proposed architecture preserves the semiautonomous approximation structure while incorporating external control actions through a control-affine representation. Unlike standard NODE formulations with fully time-dependent coefficients, all trainable parameters remain independent of time, and the temporal variable appears only inside the activation function. Consequently, the proposed framework significantly reduces parameter complexity while retaining strong approximation capabilities.

A key feature of the present construction is that the control variable is not embedded directly into the neural approximation space itself. Instead, the neural architecture separately approximates:
\begin{itemize}
	\item the drift dynamics $f$,
	\item the control operator $G$.
\end{itemize}
This separation preserves compatibility with the universal approximation theory for shallow neural networks and semiautonomous neural ODEs. Furthermore, it naturally aligns with the classical structure of nonlinear control systems and allows one to exploit analytical tools from control theory, dynamical systems, and functional analysis.

The proposed framework can be viewed as a mathematically consistent extension of SA-NODEs to nonlinear controlled dynamical systems. In particular, the architecture is suitable for learning controlled trajectories, approximating nonlinear feedback systems, and modeling externally forced dynamical processes. It also establishes a natural bridge between neural differential equations and modern nonlinear control theory.

\subsection{Related literature}
The theory of neural ordinary differential equations has rapidly evolved in recent years. Beyond the foundational work of \cite{chen2018neural}, several important extensions have been proposed, including augmented NODEs \cite{dupont2019augmented}, neural controlled differential equations \cite{kidger2020neural}, Hamiltonian neural networks \cite{greydanus2019hamiltonian}, stable neural ODEs \cite{haber2017stable}, and operator-learning approaches \cite{kovachki2023neural}. Universal approximation properties for neural differential equations have been extensively studied in \cite{tabuada2022universal,li2022approximation,li2026universal}. Approximation results for transport equations and Wasserstein dynamics were established in \cite{li2026universal}. The present work is also related to recent studies connecting machine learning with optimal control and dynamical systems; see \cite{weinan2017proposal,ruthotto2020deep,ottobre2019introduction}.

The controllability and interpolation capabilities of neural differential systems have also attracted considerable attention. Interpolation and controllability properties for deep neural networks were investigated in \cite{cheng2025interpolation}, while the interplay between depth and width in interpolation mechanisms for neural ODEs was analyzed in \cite{alvarez2024interplay}. Generalization properties and complexity bounds for neural ODEs and residual neural networks were further studied in \cite{marion2023generalization}. Connections between controlled differential equations, universal interpolation, and deep neural architectures were developed in \cite{cuchiero2020deep}. More recently, constructive interpolation, simultaneous cell controllability, and regional transport mechanisms for SA-NODEs were investigated from a control-theoretic perspective in \cite{cheng2025interpolation,bloch2023control,alvarez2026constructive,dupuis2023controllability}.

From the perspective of approximation theory, Barron-space and Sobolev-space approximation estimates continue to play a central role in understanding neural differential equations \cite{barron1993universal,pinkus1999approximation,eykholt2020barron,lu2021deep}. Approximation rates for shallow neural networks and zonoid approximation were investigated in \cite{siegel2025optimal,siegel2024sharp}, while derivative approximation in Barron spaces using ReLU$^k$ activations was analyzed in \cite{li2024two}. Spectral Barron-space approximation for deep neural networks was recently studied in \cite{liao2025spectral}. These developments provide important analytical tools for deriving quantitative approximation estimates for neural dynamical systems.

The proposed framework is also closely related to the rapidly developing area of scientific machine learning and neural operators. Neural operator frameworks for scientific simulations and operator learning were extensively studied in \cite{azizzadenesheli2024neural,cao2024laplace,choi2024spectral}. Applications to adaptive control, digital twins, and engineering systems were considered in \cite{lv2025neural,kobayashi2024improved,kobayashi2024deep}. Physics-informed and physics-guided neural methods for PDE-constrained optimization and scientific computing were investigated in \cite{faroughi2024physics,song2024admm,lai2025hard,wang2024respecting}. In addition, neural transport and controllability aspects for normalizing flows were explored in \cite{ruiz2024control}.\\
\textbf{Main contributions}
The main contributions of this paper are as follows.
\begin{enumerate}
	\item
	We introduce controlled semiautonomous neural ordinary differential equations (controlled SA-NODEs) for nonlinear control-affine dynamical systems while preserving reduced parameter complexity.
	\item
	We develop a neural approximation framework in which the drift vector field and control operator are approximated separately, maintaining consistency with nonlinear control theory.
	\item
	We prove universal approximation results for controlled SA-NODEs on compact sets and derive quantitative approximation estimates of order \\$\mathcal{O}\left(P^{-1/2}+Q^{-1/2}\right).$
	\item
	We establish a controllability preservation result showing that controlled SA-NODEs retain approximate controllability properties of the original system on compact subsets.
	\item
	We validate the framework numerically on controlled pendulum and Duffing oscillator systems, demonstrating accurate trajectory reconstruction with fewer trainable parameters than vanilla NODEs.
	
\end{enumerate}
\textbf{Organization of the Paper}
The remainder of the paper is organized as follows:
In Section~\ref{sec:framework}, we introduce the notation, admissible controls, and the controlled SA-NODE framework together with the well-posedness analysis.
Section~\ref{sec:approximation} is devoted to universal approximation results and quantitative approximation estimates for controlled SA-NODEs.
In Section~\ref{sec:controllability}, we investigate controllability preservation properties of the controlled SA-NODE architecture.
Section~\ref{sec:numerics} presents numerical experiments and comparisons with classical neural ODE architectures.
Finally, Section~\ref{sec:conclusion} contains concluding remarks and future research directions.

\section{Controlled SA-NODEs: Framework and Well-Posedness}
\label{sec:framework}
 
\subsection{Notations}
Let $n,d\in\mathbb N_{+}$. 
For any vector $x\in\mathbb R^{n}$ and any $p\geq1$, we denote by $\|x\|_{\ell^{p}}$
the usual $\ell^{p}$-norm of $x$. 
For simplicity, $\|x\|$ denotes the Euclidean norm on $\mathbb R^{n}$.
For vectors $x,y\in\mathbb R^{n}$, the inner product and the Hadamard product are respectively defined by
\[
\langle x,y\rangle
=
\sum_{i=1}^{n}x_i y_i,
\qquad
x\circ y
=
(x_1y_1,\dots,x_n y_n).
\]

Throughout this paper, unless otherwise specified, we consider the ReLU activation function
\[
\sigma(x)=\max\{x,0\},
\qquad x\in\mathbb R,
\]
together with its vector-valued extension
\[
\bm{\sigma}(x)
=
(\sigma(x_1),\dots,\sigma(x_d)),
\qquad
x\in\mathbb R^{d}.
\]

Let $\Omega\subseteq\mathbb R^{n}$ be a closed set. 
For any integer $k\geq1$, we denote by $H^{k}(\Omega)$
the Sobolev space of order $k$, and by $C(\Omega)$
the space of continuous functions on $\Omega$, both equipped with their standard norms.

For vector-valued functions
\[
F=(F_1,\dots,F_d)\in H^{k}(\Omega;\mathbb R^{d}),
\qquad
G\in C(\Omega;\mathbb R^{d}),
\]
we define
\[
\|F\|_{H^{k}(\Omega;\mathbb R^{d})}
:=
\left(
\sum_{i=1}^{d}
\|F_i\|_{H^{k}(\Omega)}^{2}
\right)^{1/2},
\]
and
\[
\|G\|_{C(\Omega;\mathbb R^{d})}
:=
\sup_{x\in\Omega}\|G(x)\|.
\]

Whenever no confusion arises, we simply write $\|F\|_{H^{k}(\Omega)}$

instead of $\|F\|_{H^{k}(\Omega;\mathbb R^{d})}.$

\subsection{Control-affine dynamical systems}

Let $T>0$. 
We consider nonlinear controlled dynamical systems of the form
\begin{equation}
	\left\{
	\begin{aligned}
		\dot z(t)
		&=
		f(z(t),t)
		+
		G(z(t),t)u(t),
		\qquad t\in(0,T),
		\\
		z(0)
		&=
		z_0,
	\end{aligned}
	\right.
	\label{eq:control-affine}
\end{equation}
where
\[
f:\mathbb R^{d}\times[0,T]\to\mathbb R^{d}
\]
denotes the drift vector field,
\[
G:\mathbb R^{d}\times[0,T]
\to
\mathbb R^{d\times m}
\]
is the control operator, and
\[
u:[0,T]\to\mathbb R^{m}
\]
is the control input and an initial point $ z_0 \in \mathbb R^{d} $.

To ensure well-posedness of \eqref{eq:control-affine}, we impose the following assumption.

\begin{assumption}
	\label{assumption-control}
	The functions
	\[
	f:\mathbb R^{d}\times[0,T]\to\mathbb R^{d},
	\qquad
	G:\mathbb R^{d}\times[0,T]\to\mathbb R^{d\times m}
	\]
	are continuous in $t$. Moreover, there exists an constant $L_f>0, \text{ and } L_g>0$ such that
	\[
	\|f(x,t)-f(y,t)\|
	\leq
	L_f\|x-y\|,
	\]
	and
	\[
	\|G(x,t)-G(y,t)\|_{\mathcal{L}(\mathbb{R}^m, \mathbb{R}^d)}
	\leq
	L_g\|x-y\|,
	\]
	for all
	\[
	x,y\in\mathbb R^{d},
	\qquad
	t\in[0,T],
	\]
	where $\|\cdot\|$ denotes the Euclidean norm on $\mathbb{R}^d$, and $\|\cdot\|_{\mathcal{L}(\mathbb{R}^m, \mathbb{R}^d)}$ denotes the induced matrix operator norm from $\mathbb{R}^m$ to $\mathbb{R}^d$.
\end{assumption}

For a fixed constant $M>0$, we define the admissible control set
\[
\mathcal U_{M}
:=
\left\{
u\in L^{\infty}(0,T;\mathbb R^{m})
:
\|u\|_{L^{\infty}(0,T)}\leq M
\right\}.
\]

Under Assumption \ref{assumption-control}, system
\eqref{eq:control-affine}
admits a unique solution for every initial datum $z_0\in\mathbb R^{d}$
and every admissible control $u\in\mathcal U_M.$

\subsection{Controlled semiautonomous neural ODEs}
\label{subsec:foursystems}

Motivated by the semiautonomous neural ODE framework introduced in \cite{li2026universal}, we define the controlled semiautonomous neural ordinary differential equation (controlled SA-NODE) 
\begin{equation}
	\left\{
	\begin{aligned}
		\dot y(t)
		&=
		f_{\Theta}(y(t),t)
		+
		G_{\Phi}(y(t),t)u(t),
		\\
		y(0)
		&=
		y_0,
	\end{aligned}
	\right.
	\label{eq:controlled-sanode}
\end{equation}
where the neural drift field is given by
\begin{equation}
	f_{\Theta}(y,t)
	=
	\sum_{i=1}^{P}
	W_i
	\circ
	\bm{\sigma}
	\left(
	A_i^{1}y
	+
	A_i^{2}t
	+
	B_i
	\right),
	\label{eq:drift-network}
\end{equation}
and the neural control operator is defined by
\begin{equation}
	G_{\Phi}(y,t)
	=
	\sum_{j=1}^{Q}
	\widetilde W_j
	\circ
	\bm{\sigma}
	\left(
	\widetilde A_j^{1}y
	+
	\widetilde A_j^{2}t
	+
	\widetilde B_j
	\right).
	\label{eq:control-network}
\end{equation}

Here,
\[
W_i\in\mathbb R^{d},
\qquad
A_i^{1}\in\mathbb R^{d\times d},
\qquad
A_i^{2}\in\mathbb R^{d},
\qquad
B_i\in\mathbb R^{d},
\]
for
\[
i=1,\dots,P,
\]
and similarly,
\[
\widetilde W_j\in\mathbb R^{d\times m},
\qquad
\widetilde A_j^{1}\in\mathbb R^{d\times d},
\qquad
\widetilde A_j^{2}\in\mathbb R^{d},
\qquad
\widetilde B_j\in\mathbb R^{d},
\]
for
\[
j=1,\dots,Q.
\]

We denote by
\[
\Theta
=
(W_i,A_i^{1},A_i^{2},B_i)_{i=1}^{P}
\]
the collection of drift parameters, and by
\[
\Phi
=
(\widetilde W_j,
\widetilde A_j^{1},
\widetilde A_j^{2},
\widetilde B_j)_{j=1}^{Q}
\]
the collection of control-network parameters.

Unlike classical NODE architectures with fully time-dependent coefficients, all trainable parameters in \eqref{eq:controlled-sanode} are independent of time, and the time variable appears only inside the activation functions. The total number of trainable parameters (degrees of freedom) in the controlled SA-NODE is given by

\begin{equation}
	Pd(d+3)+Qd(d+m+2).
	\label{eq:controlledsanodedof}
\end{equation}
Consequently, the proposed framework significantly reduces parameter complexity while preserving strong approximation properties.

\subsection{Well-posedness of controlled SA-NODEs}

Define the neural vector field
\[
F_{\Theta,\Phi}(y,t,u)
:=
f_{\Theta}(y,t)
+
G_{\Phi}(y,t)u.
\]

Since the ReLU activation function is globally Lipschitz continuous, both
\[
f_{\Theta}:\mathbb{R}^d \times [0,T] \to \mathbb{R}^d
\quad \text{and} \quad
G_{\Phi}:\mathbb{R}^d \times [0,T] \to \mathbb{R}^{d \times m}
\]
are globally Lipschitz continuous with respect to the state variable $y$.

More precisely, there exist constants $L_{\Theta}>0,L_{\Phi}>0,$
such that
\begin{equation}
	\|f_{\Theta}(x,t)-f_{\Theta}(y,t)\|
	\leq
	L_{\Theta}\|x-y\|,
	\label{eq:lipschitz-drift}
\end{equation}
and
\begin{equation}
	\|G_{\Phi}(x,t)-G_{\Phi}(y,t)\|_{\mathcal{L}(\mathbb{R}^m, \mathbb{R}^d)}
	\leq
	L_{\Phi}\|x-y\|,
	\label{eq:lipschitz-control}
\end{equation}
for all
\[
x,y\in\mathbb R^{d},
\qquad
t\in[0,T],
\]
where $\|\cdot\|$ denotes the Euclidean norm on $\mathbb{R}^d$, and $\|\cdot\|_{\mathcal{L}(\mathbb{R}^m, \mathbb{R}^d)}$ denotes the induced matrix operator norm from $\mathbb{R}^m$ to $\mathbb{R}^d$.

The Lipschitz constant associated with the drift network is given by
\[
L_{\Theta}
=
\left(
\sum_{k=1}^{d}
\left(
\sum_{i=1}^{P}
|(W_i)_k|
\|(A_i^{1})_k\|
\right)^2
\right)^{1/2},
\]
where $(W_i)_k$
denotes the $k$th component of the vector $W_i$, and $(A_i^{1})_k$
denotes the $k$th row of the matrix $A_i^{1}$.

Similarly, the control network admits a Lipschitz constant
\[
L_{\Phi}
=
\left(
\sum_{k=1}^{d}
\left(
\sum_{j=1}^{Q}
\|(\widetilde W_j)_k\|
\,
\|(\widetilde A_j^{1})_k\|
\right)^2
\right)^{1/2},
\]
where $\|(\widetilde W_j)_k\|$ represents the matrix row-norm corresponding to the mapping onto the $k$th state component.

Consequently, for every admissible control $u\in\mathcal U_M,$
the neural vector field\\ $F_{\Theta,\Phi}(y,t,u)$ is globally Lipschitz continuous with respect to $y$. Therefore, by the Cauchy-Lipschitz theorem, for every initial datum $y_0\in\mathbb R^{d},$
system \eqref{eq:controlled-sanode} admits a unique global solution on $[0,T]$.

\section{Controlled vanilla neural ODE}
\label{subsec:vanillanode}

Before introducing the semiautonomous architecture, we recall the
classical, fully time-dependent neural ordinary differential equation
associated with controlled dynamical systems. Given the control-affine
system
\begin{equation}
	\left\{
	\begin{aligned}
		\dot z(t)
		&=
		f(z(t),t)
		+
		G(z(t),t)u(t),
		\\
		z(0)
		&=
		z_0,
	\end{aligned}
	\right.
	\label{eq:controlaffine-vanilla}
\end{equation}
we define the \emph{controlled vanilla neural ODE} by
\begin{equation}
	\left\{
	\begin{aligned}
		\dot x(t)
		&=
		f_{\Theta}(x(t),t)
		+
		G_{\Phi}(x(t),t)u(t),
		\\
		x(0)
		&=
		x_0,
	\end{aligned}
	\right.
	\label{eq:controlledvanillaNODE}
\end{equation}
where the drift neural field is given by
\begin{equation}
	f_{\Theta}(x,t)
	=
	\sum_{i=1}^{P}
	W_i(t)
	\circ
	\sigma
	\bigl(
	A_i(t)x+B_i(t)
	\bigr),
	\label{eq:vanilladrift}
\end{equation}
and the control operator network is defined by
\begin{equation}
	G_{\Phi}(x,t)
	=
	\sum_{j=1}^{Q}
	\widetilde W_j(t)
	\circ
	\sigma
	\bigl(
	\widetilde A_j(t)x+\widetilde B_j(t)
	\bigr).
	\label{eq:vanillacontrol}
\end{equation}
Here,
\[
W_i(t)\in\mathbb R^{d},
\qquad
A_i(t)\in\mathbb R^{d\times d},
\qquad
B_i(t)\in\mathbb R^{d},
\]
for $i=1,\dots,P$, and similarly
\[
\widetilde W_j(t)\in\mathbb R^{d\times m},
\qquad
\widetilde A_j(t)\in\mathbb R^{d\times d},
\qquad
\widetilde B_j(t)\in\mathbb R^{d},
\]
for $j=1,\dots,Q$.

Unlike the semiautonomous formulation introduced below, the trainable
parameters in the controlled vanilla NODE depend explicitly on $t$.
Consequently, the architecture possesses greater flexibility but also
significantly larger parameter complexity and computational cost. In the
numerical implementation, time-dependence is realized by allocating an
independent set of coefficients $(W_i,A_i,B_i,\widetilde
W_j,\widetilde A_j,\widetilde B_j)$ at every one of the $N_{\mathrm{time}}-1$
discretized time steps of the integration grid $t_0,\dots,t_{N_{\mathrm
		time}-1}$, each set sharing the same per-branch structure
\eqref{eq:vanilladrift}-\eqref{eq:vanillacontrol} as a single SA-NODE
layer; the parameters used in the right-hand side at a query time $t$ are
those of the nearest grid step, giving a piecewise-constant-in-time
discretization consistent with the classical, fully time-dependent NODE
formulation of Section~\ref{se:Intro} (eq.~(\ref{eq:vanillaNODE})). With per-step width $P=Q$, this gives a total
parameter count of
\begin{equation}
	(N_{\mathrm{time}}-1)
	\Bigl[
	Pd(d+3) + Qd(d+m+2)
	\Bigr],
	\label{eq:vanilladof}
\end{equation}
i.e.\ $N_{\mathrm{time}}-1$ independent copies of the controlled SA-NODE
parameter count \eqref{eq:controlledsanodedof} below, one per discretized
time step.

The controlled vanilla neural ODE
\eqref{eq:controlledvanillaNODE}
is a control-affine extension of the classical vanilla NODE architecture
introduced in \cite{li2026universal,resnet2016}, where the fully
time-dependent neural vector field
\[
\dot x(t)
=
\sum_{i=1}^{P}
W_i(t)\circ
\sigma(A_i(t)x+B_i(t))
\]
is used to approximate general nonautonomous dynamical systems.
Here, we preserve the intrinsic control-affine structure of the target
system by approximating separately the drift field \(f\) and the control
operator \(G\).

\begin{theorem}[Universal approximation by controlled vanilla NODEs]
	\label{thm:controlled-vanilla-uap}
	
	Assume that
	\[
	f:\mathbb R^d\times[0,T]\to\mathbb R^d,
	\qquad
	G:\mathbb R^d\times[0,T]\to\mathbb R^{d\times m}
	\]
	are continuous in \(t\) and uniformly Lipschitz in \(x\).
	Let \(u\in L^\infty(0,T;\mathbb R^m)\).
	
	Then, for every compact set \(K\subset\mathbb R^d\)
	and every \(\varepsilon>0\), there exist widths
	\(P,Q\in\mathbb N\) and parameters
	\[
	(W_i,A_i,B_i)_{i=1}^P,
	\qquad
	(\widetilde W_j,\widetilde A_j,\widetilde B_j)_{j=1}^Q
	\]
	such that the solution \(x(\cdot)\) of
	\eqref{eq:controlledvanillaNODE}
	satisfies
	\[
	\sup_{t\in[0,T]}
	\|z(t)-x(t)\|
	<
	\varepsilon
	\]
	for every solution \(z(\cdot)\) of
	\eqref{eq:controlaffine-vanilla}
	with initial data in \(K\).
\end{theorem}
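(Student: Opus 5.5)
Our plan is a compactness-plus-Grönwall argument. First we fix a bounded region containing all relevant trajectories. Then we approximate $f$ and $G$ uniformly on that region by networks of the form \eqref{eq:vanilladrift}--\eqref{eq:vanillacontrol}. Finally we compare the two flows and close with Grönwall's inequality. Throughout, let $L_f,L_g$ denote the Lipschitz constants of $f,G$ in $x$, and set $M:=\|u\|_{L^\infty}$.

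\textbf{Step 1 (a priori bound).} For $z_0\in K$, linear growth together with Grönwall gives
\[
\|z(t)\|\le R_0:=\Bigl(\sup_K\|z_0\|+T\sup_{[0,T]}(\|f(0,t)\|+M\|G(0,t)\|)\Bigr)e^{(L_f+ML_g)T},
\]
using that $f(0,\cdot)$ and $G(0,\cdot)$ are continuous on $[0,T]$. Set $R:=R_0+1$ and $\Omega:=\overline{B_R}\times[0,T]$. Since $f,G$ are continuous in $t$ and uniformly Lipschitz in $x$, they are jointly continuous on the compact set $\Omega$.

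\textbf{Step 2 (uniform approximation).} We approximate $f$ and $G$ on $\Omega$ to accuracy $\delta$. A convenient route is to partition $[0,T]$ into intervals $[t_k,t_{k+1})$ short enough that, by uniform continuity on $\Omega$, $\|f(x,t)-f(x,t_k)\|<\delta/2$ for all $x\in\overline{B_R}$, and similarly for $G$. On each interval we then apply the classical universal approximation theorem for shallow ReLU networks \cite{pinkus1999approximation} to $x\mapsto f(x,t_k)$ on $\overline{B_R}$, componentwise. Each scalar component $\sum_l c_l\sigma(a_l\cdot x+b_l)$ can be embedded into the Hadamard form $W_i\circ\sigma(A_ix+B_i)$ by letting row $k$ of $A_i$ carry $a_l$ and placing the other weights in the other coordinates as zero.

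Padding with zero neurons gives a common width $P$ (respectively $Q$) across intervals. We then define the time-dependent coefficients $W_i(t),A_i(t),B_i(t)$ to be piecewise constant, which matches the piecewise-constant realization described above; this is admissible since only measurability in $t$ is needed for Carathéodory solutions. The result is
\[
\sup_\Omega\|f_\Theta-f\|<\delta,\qquad \sup_\Omega\|G_\Phi-G\|<\delta.
\]
On each interval the network is Lipschitz in $x$, and we take $L$ to be the maximum of these Lipschitz constants over the finitely many intervals, so $L$ bounds the Lipschitz constant of $f_\Theta+G_\Phi u$ uniformly in $t$. Global existence of $x(\cdot)$ then follows.

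\textbf{Step 3 (stability estimate).} Let $e=x-z$. As long as $x(t)\in\overline{B_R}$, we have
\[
\|\dot e\|\le \|f_\Theta(x,t)-f(x,t)\|+M\|G_\Phi(x,t)-G(x,t)\|+(L_f+ML_g)\|e\|.
\]
Here we compare at the point $x$ and use the Lipschitz constants of the \emph{true} fields, so the possibly large network constant $L$ never enters. This yields $\|e(t)\|\le (1+M)\delta\,T e^{(L_f+ML_g)T}$. We choose $\delta$ so that this quantity is less than $\min(\varepsilon,1)$. A continuation argument then shows $x$ never leaves $\overline{B_R}$: the first exit time would require $\|e\|=1$, which contradicts the bound. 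Since the estimate is uniform over $z_0\in K$, the conclusion $\sup_t\|z-x\|<\varepsilon$ holds for all initial data in $K$ simultaneously.

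\textbf{Main obstacle.} The delicate point is Step 2: producing approximants that are uniformly accurate in $(x,t)$ while retaining the Lipschitz-in-$x$ property needed for well-posedness. The piecewise-constant-in-time construction handles both. Alternatively, one could approximate in the joint variable $(x,t)$ with $t$ treated as a parameter. Step 3 is arranged so that the network's Lipschitz constant never has to be controlled quantitatively.
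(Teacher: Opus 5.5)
Your proof is correct. Its skeleton matches the paper's proof, which is written for the uniform-in-$\mathcal U_M$ version (Theorem~\ref{thm:vanilla-uap}): an a priori bound for the true flow, uniform approximation of $f$ and $G$ on a compact cylinder, and a Gr\"onwall comparison evaluated at the neural point $x(s)$ so that only $L_f,L_g$ enter. The genuine difference is how you confine the neural trajectory to the set where the approximation holds. The paper proves a separate bootstrap lemma (Lemma~\ref{lem:apriori_1}) with growing balls $K_t$ whose radius carries buffer terms $t+Mt$. That lemma shows that \emph{any} system whose fields are within $1$ of $(f,G)$ on $K_T\times[0,T]$ has trajectories in $K_t$, and the paper applies it a second time to $(f_\Theta,G_\Phi)$, which needs only $\delta<1$. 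You instead enlarge the true-trajectory ball by a fixed buffer of $1$ and let the error estimate itself confine $x$ through a first-exit argument. This costs you only the slightly stronger requirement $(1+M)\delta T e^{(L_f+ML_g)T}<1$. Your route is shorter and avoids the open/closed bookkeeping of the bootstrap. The paper's lemma, in exchange, gives confinement independently of any comparison estimate, and it is reused (restated as Lemma~\ref{lem:apriori}) for the SA-NODE theorems. On the approximation step, the paper just cites Pinkus and the vanilla-NODE construction of \cite{li2026universal}. You build the time-dependent coefficients explicitly: frozen-time approximation on a fine partition, Hadamard embedding, zero-padding, and Carath\'eodory well-posedness, which matches the piecewise-constant implementation the paper describes. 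The joint-variable alternative you mention, absorbing $t$ into the bias as $B_i(t)=a_i^{t}t+b_i$, is even shorter and gives continuous-in-time coefficients. Since nothing in your argument depends on $u$ beyond $M$, it in fact proves the uniform statement of Theorem~\ref{thm:vanilla-uap} as well.
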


\begin{remark}
	Theorem~\ref{thm:controlled-vanilla-uap} is the single-control
	case ($M:=\|u\|_{L^\infty}$ fixed) of
	Theorem~\ref{thm:vanilla-uap} below, which we state and prove
	directly in uniform form over the whole admissible class
	$\mathcal U_M$.
\end{remark}

Although the controlled vanilla NODE possesses strong approximation
capabilities, its fully time-dependent parametrization requires an
independent set of coefficients at every discretized time step.
This leads to a parameter complexity proportional to the number of
temporal layers.

Motivated by the semiautonomous NODE architecture introduced in
\cite{li2026universal,resnet2016}, we next introduce a controlled
semiautonomous neural ODE (controlled SA-NODE), where the explicit
time dependence is restricted to affine time features inside the
activation functions.

\subsection{Universal approximation by controlled vanilla NODEs}

We now establish the universal approximation property for the
controlled vanilla NODE architecture
\eqref{eq:controlledvanillaNODE}.
This result may be viewed as the control-affine extension of the
classical vanilla NODE approximation theorem introduced in
\cite{li2026universal}.

The proof follows the same strategy as in the controlled SA-NODE case:
\begin{itemize}
	\item approximation of the drift and control operators,
	\item a priori compactness of trajectories \emph{for both the true
		and the neural dynamics},
	\item stability estimates for controlled ODEs,
	\item Gr\"onwall-type propagation of approximation errors.
\end{itemize}

Unlike the semiautonomous architecture, however, the present formulation
allows the neural coefficients to depend freely on time, which greatly
simplifies the approximation step.

For $t\in[0,T]$, define the time-dependent compact ball
\begin{equation}
	K_t
	:=
	\left\{
	x\in\mathbb R^d
	\ :\
	\|x\|
	\le
	\left[
	\sup_{z\in K}\|z\|
	+
	t
	+
	\int_0^t \|f(0,s)\|\,ds
	+
	Mt\sup_{s\in[0,t]}\|G(0,s)\|
	+
	Mt
	\right]
	e^{(L_f+ML_G)t}
	\right\}.
	\label{eq:Ktdef}
\end{equation}

\begin{lemma}[A priori bound for controlled trajectories]
	\label{lem:apriori_1}
	Let $f,G$ satisfy the Lipschitz hypotheses of
	Theorem~\ref{thm:vanilla-uap}, and let $f_1,G_1$ be continuous,
	locally Lipschitz in $x$, and such that
	\[
	\|f_1-f\|_{\mathcal C(K_T\times[0,T])}\le 1,
	\qquad
	\|G_1-G\|_{\mathcal C(K_T\times[0,T])}\le 1.
	\]
	Then, for any $u\in\mathcal U_M$ and any $y(\cdot)$ solving
	\[
	\dot y(t)=f_1(y(t),t)+G_1(y(t),t)u(t),
	\qquad
	y(0)=z_0\in K,
	\]
	we have $y(t)\in K_t$ for every $t\in[0,T]$.
\end{lemma}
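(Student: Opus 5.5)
The plan is a continuity (bootstrap) argument: as long as the trajectory $y(\cdot)$ remains in $K_T$, the perturbed fields $f_1,G_1$ obey the same affine growth bounds as $f,G$ up to an additive constant $1$. A Gr\"onwall estimate then pins $y(t)$ inside the smaller ball $K_t$, which prevents the trajectory from ever reaching the boundary of $K_T$.

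Write $R(t)$ for the radius in \eqref{eq:Ktdef}, so $K_t=\{\|x\|\le R(t)\}$. $R$ is continuous and strictly increasing (because of the term $+t$ and the nondecreasing remaining terms), hence $K_t\subset K_T$ for all $t\le T$. Let $L:=L_f+ML_G$, where $L_G$ is the Lipschitz constant of $G$ from Assumption~\ref{assumption-control} (denoted $L_g$ there).

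\textbf{Growth estimate on $K_T$.} For $x\in K_T$, $s\in[0,T]$ and $\|u(s)\|\le M$, I will use the triangle inequality and the Lipschitz bounds to get
\[
\|f_1(x,s)\|\le \|f(0,s)\|+L_f\|x\|+1,
\qquad
\|G_1(x,s)u(s)\|\le M\bigl(\|G(0,s)\|+L_G\|x\|+1\bigr).
\]
These use $\|f_1-f\|\le1$ and $\|G_1-G\|\le1$ on $K_T\times[0,T]$.

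\textbf{Gr\"onwall on the exit interval.} Let $\tau:=\sup\{t\in[0,T]: y(s)\in K_T \text{ for all } s\le t\}$. This is well defined and positive, since $\|z_0\|\le \sup_K\|z\|<R(T)$ and $y$ is continuous. For $t\le\tau$, writing $y(t)=z_0+\int_0^t(f_1+G_1u)\,ds$ and taking norms gives
\[
\|y(t)\|\le a(t)+L\int_0^t\|y(s)\|\,ds,
\]
\[
a(t):=\sup_K\|z\|+t+\int_0^t\|f(0,s)\|ds+Mt\sup_{[0,t]}\|G(0,s)\|+Mt.
\]
Here $\int_0^t M\|G(0,s)\|\,ds\le Mt\sup_{[0,t]}\|G(0,s)\|$. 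Since $a$ is nondecreasing, the integral form of Gr\"onwall's lemma yields $\|y(t)\|\le a(t)e^{Lt}=R(t)$, i.e.\ $y(t)\in K_t$ for all $t\in[0,\tau]$.

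\textbf{Closing the bootstrap.} Suppose $\tau<T$. Then $\|y(\tau)\|\le R(\tau)<R(T)$ by strict monotonicity of $R$. By continuity, $y$ stays in the interior of $K_T$ on some interval $[\tau,\tau+\delta]$, which contradicts the definition of $\tau$. Hence $\tau=T$ and the claim holds on all of $[0,T]$.

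The main subtlety is exactly this step. The hypotheses on $f_1,G_1$ hold only on $K_T$, so the growth bound cannot be applied globally, and one must guarantee a strict gap between $R(\tau)$ and $R(T)$. The built-in $+t$ term in \eqref{eq:Ktdef} provides that gap. Local Lipschitzness of $f_1,G_1$ is needed only to ensure the solution exists and is continuous up to the exit time; the same argument shows it cannot blow up before $T$.
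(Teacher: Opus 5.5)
Your proof is correct and follows essentially the same route as the paper: a bootstrap (continuity) argument in which, as long as the trajectory stays in $K_T$, the perturbed fields satisfy affine growth bounds with an extra additive $1$, and Gr\"onwall with the nondecreasing forcing term $a(t)$ gives $\|y(t)\|\le R(t)$. Your exit-time formulation is slightly more careful than the paper's Step~2. You state explicitly that the strict gap $R(\tau)<R(T)$, supplied by the $+t$ term in \eqref{eq:Ktdef}, is what keeps $y$ inside $K_T$ past $\tau$; the paper instead appeals to continuity of the discrepancy, which is only known to be $\le 1$, and so leaves that point implicit.
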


\begin{proof}
	We use the standard bootstrap principle. For $t\in[0,T]$, let $H(t)$
	denote the hypothesis
	\[
	\|f_1(y(s),s)-f(y(s),s)\|\le 1,
	\qquad
	\|G_1(y(s),s)-G(y(s),s)\|\le 1,
	\qquad
	\forall s\in[0,t],
	\]
	and let $C(t)$ denote the conclusion $y(s)\in K_s$ for all $s\in[0,t]$.
	Clearly $H(0)$ holds (the hypothesis is vacuous at $t=0$).
	
	\medskip
	\noindent\textbf{Step 1: $H(t)\Rightarrow C(t)$.}
	Fix $s\in[0,t]$. By the triangle inequality and the Lipschitz bound on
	$f$,
	\begin{align*}
		\|f_1(y(s),s)\|
		&\le
		\|f_1(y(s),s)-f(y(s),s)\|
		+
		\|f(y(s),s)-f(0,s)\|
		+
		\|f(0,s)\|
		\\
		&\le
		1
		+
		L_f\|y(s)\|
		+
		\|f(0,s)\|,
	\end{align*}
	and, analogously,
	\[
	\|G_1(y(s),s)\|
	\le
	1
	+
	L_G\|y(s)\|
	+
	\|G(0,s)\|.
	\]
	From the integral form $y(t)=z_0+\int_0^t\bigl[f_1(y(s),s)+G_1(y(s),s)u(s)\bigr]\,ds$
	and $\|u(s)\|\le M$, we get
	\[
	\|y(t)\|
	\le
	\|z_0\|
	+
	\int_0^t \|f_1(y(s),s)\|\,ds
	+
	\int_0^t \|G_1(y(s),s)\|\,\|u(s)\|\,ds.
	\]
	Substituting the two pointwise bounds above,
	\begin{align*}
		\|y(t)\|
		&\le
		\|z_0\|
		+
		\int_0^t\Bigl[1+L_f\|y(s)\|+\|f(0,s)\|\Bigr]ds
		+
		M\int_0^t\Bigl[1+L_G\|y(s)\|+\|G(0,s)\|\Bigr]ds
		\\
		&=
		\|z_0\|
		+
		\underbrace{\int_0^t 1\,ds}_{=\,t}
		+
		\int_0^t\|f(0,s)\|\,ds
		+
		L_f\int_0^t\|y(s)\|\,ds
		\\
		&\quad
		+
		\underbrace{M\int_0^t 1\,ds}_{=\,Mt}
		+
		M\int_0^t\|G(0,s)\|\,ds
		+
		ML_G\int_0^t\|y(s)\|\,ds.
	\end{align*}
	Bounding $\displaystyle\int_0^t\|G(0,s)\|\,ds\le t\sup_{s\in[0,t]}\|G(0,s)\|$
	and collecting terms,
	\begin{align}
		\|y(t)\|
		&\le
		\underbrace{
			\left[
			\|z_0\|
			+
			t
			+
			\int_0^t\|f(0,s)\|\,ds
			+
			Mt\sup_{s\le t}\|G(0,s)\|
			+
			Mt
			\right]
		}_{=:A(t)}\notag\\
		&\qquad+
		(L_f+ML_G)\int_0^t\|y(s)\|\,ds.
		\label{eq:gronwall-input}
	\end{align}
	The function $A(\cdot)$ is nonnegative and nondecreasing on $[0,T]$
	(each of its five summands is nondecreasing in $t$, the fourth because
	it is a product of two nonnegative nondecreasing factors,
	$t\mapsto t$ and $t\mapsto\sup_{s\le t}\|G(0,s)\|$). Hence Gr\"onwall's
	inequality applied to \eqref{eq:gronwall-input} gives
	\[
	\|y(t)\|
	\le
	A(t)\,e^{(L_f+ML_G)t}.
	\]
	Since $z_0\in K$ we have $\|z_0\|\le\sup_{z\in K}\|z\|$, so $A(t)$ is
	bounded above by the bracketed expression in the definition
	\eqref{eq:Ktdef} of $K_t$, and therefore $\|y(t)\|$ is bounded by the
	defining radius of $K_t$, i.e.\ $y(t)\in K_t$. This proves $C(t)$.
	
	\medskip
	\noindent\textbf{Step 2: $C(t)\Rightarrow H(t')$ for $t'$ near $t$.}
	The maps $s\mapsto\|f_1(y(s),s)-f(y(s),s)\|$ and
	$s\mapsto\|G_1(y(s),s)-G(y(s),s)\|$ are continuous (as compositions of
	continuous functions along the continuous trajectory $y(\cdot)$).
	Since $C(t)$ gives $y(t)\in K_t\subset K_T$, the hypothesis bound
	$\|f_1-f\|_{\mathcal C(K_T\times[0,T])}\le1$ applies globally on
	$K_T\times[0,T]$ and hence in particular at $s=t$, so by continuity of
	$y(\cdot)$, $H(t)$ persists for $t'$ in a neighborhood of $t$.
	
	\medskip
	\noindent\textbf{Step 3: conclusion.}
	$K_t$ is compact for each $t$ and depends continuously on $t$ (its
	defining radius is continuous and nondecreasing in $t$), so the set
	$\{t\in[0,T]:C(t)\text{ holds}\}$ is closed; Steps 1-2 show it is also
	relatively open along $H$, and it is nonempty since $H(0)$ holds.
	The standard bootstrap argument (see, e.g., Tao~\cite{tao2006nonlinear},
	Prop.~1.21) then yields $C(T)$, i.e.\ $y(t)\in K_t$ for every
	$t\in[0,T]$.
\end{proof}

\begin{theorem}[Universal approximation by controlled vanilla NODEs]
	\label{thm:vanilla-uap}
	
	Let $K\subset\mathbb R^d$ be compact and let
	\[
	\mathcal U_M
	=
	\left\{
	u\in L^\infty(0,T;\mathbb R^m)
	:
	\|u\|_{L^\infty}\le M
	\right\}.
	\]
	
	Assume that
	\[
	f:\mathbb R^d\times[0,T]\to\mathbb R^d,
	\qquad
	G:\mathbb R^d\times[0,T]\to\mathbb R^{d\times m}
	\]
	are continuous in $(x,t)$ and uniformly Lipschitz in $x$:
	\[
	\|f(x,t)-f(y,t)\|
	\le
	L_f\|x-y\|,
	\]
	\[
	\|G(x,t)-G(y,t)\|
	\le
	L_G\|x-y\|,
	\]
	for all $x,y\in\mathbb R^d$ and $t\in[0,T]$.
	
	Let $z_{z_0,u}$ denote the solution of the control-affine system
	\begin{equation}
		\left\{
		\begin{aligned}
			\dot z(t)
			&=
			f(z(t),t)
			+
			G(z(t),t)u(t),
			\\
			z(0)
			&=
			z_0.
		\end{aligned}
		\right.
		\label{eq:true-vanilla}
	\end{equation}
	
	Then, for every $\varepsilon>0$, there exist integers
	$P,Q\in\mathbb N$ and parameters
	\[
	\Theta
	=
	(W_i,A_i,B_i)_{i=1}^P,
	\qquad
	\Phi
	=
	(\widetilde W_j,\widetilde A_j,\widetilde B_j)_{j=1}^Q,
	\]
	such that the solution $x_{z_0,u}$ of the controlled vanilla NODE
	\begin{equation}
		\left\{
		\begin{aligned}
			\dot x(t)
			&=
			f_\Theta(x(t),t)
			+
			G_\Phi(x(t),t)u(t),
			\\
			x(0)
			&=
			z_0,
		\end{aligned}
		\right.
		\label{eq:vanilla-approx}
	\end{equation}
	satisfies
	\[
	\sup_{t\in[0,T]}
	\|z_{z_0,u}(t)-x_{z_0,u}(t)\|
	<
	\varepsilon
	\]
	for every $z_0\in K$ and every $u\in\mathcal U_M$.
\end{theorem}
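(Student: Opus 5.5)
The strategy is the one announced before Lemma~\ref{lem:apriori_1}. We approximate $f$ and $G$ uniformly on the compact set $K_T\times[0,T]$ by time-dependent shallow ReLU networks. Lemma~\ref{lem:apriori_1} then confines both the true and the neural trajectories to $K_T$. Finally, a Gr\"onwall estimate propagates the uniform vector-field error into a uniform trajectory error, and every constant is independent of $z_0\in K$ and $u\in\mathcal U_M$.

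\textbf{Step 1 (approximation of the fields).} Fix $\eta\in(0,1]$, to be chosen later. The map $(x,t)\mapsto f(x,t)$ is continuous on the compact set $K_T\times[0,T]$, hence uniformly continuous there. The vanilla architecture allows $W_i(t),A_i(t),B_i(t)$ to be arbitrary functions of $t$, so the simplest route is the following. Pick a partition $0=t_0<\dots<t_N=T$ fine enough that $\|f(x,t)-f(x,t_k)\|<\eta/2$ and $\|G(x,t)-G(x,t_k)\|<\eta/2$ for $x\in K_T$ and $t\in[t_k,t_{k+1})$. On each subinterval, apply the classical universal approximation theorem for one-hidden-layer ReLU networks (Pinkus) componentwise to the continuous maps $x\mapsto f(x,t_k)$ and $x\mapsto G(x,t_k)$ on $K_T$, with error $\eta/2$. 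Each component network can be written in the Hadamard form $\sum_i W_i\circ\bm\sigma(A_ix+B_i)$ by placing the relevant row in the $k$-th row of $A_i$ and zeroing the other entries of $W_i$. Padding with zero neurons gives common widths $P,Q$. Defining the coefficients to be piecewise constant in $t$ yields $f_\Theta,G_\Phi$ with
\[
\|f_\Theta-f\|_{\mathcal C(K_T\times[0,T])}\le\eta,\qquad \|G_\Phi-G\|_{\mathcal C(K_T\times[0,T])}\le\eta .
\]
Measurability in $t$ suffices for Carath\'eodory well-posedness. For every fixed $t$, $f_\Theta$ and $G_\Phi$ are globally Lipschitz in $x$ with a bounded constant, since only finitely many parameter sets occur. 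If continuity in $t$ is preferred, one can instead interpolate the coefficients linearly between nodes; the estimate is unchanged up to constants.

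\textbf{Step 2 (a priori confinement).} Since $\eta\le1$, Lemma~\ref{lem:apriori_1} applies with $(f_1,G_1)=(f_\Theta,G_\Phi)$, giving $x_{z_0,u}(t)\in K_t\subset K_T$. It also applies trivially with $(f_1,G_1)=(f,G)$, giving $z_{z_0,u}(t)\in K_T$. This holds for all $z_0\in K$, all $u\in\mathcal U_M$, and all $t\in[0,T]$.

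\textbf{Step 3 (Gr\"onwall).} Set $e(t)=z(t)-x(t)$. Then
\[
\|e(t)\|\le\int_0^t\Bigl(\|f(z,s)-f(x,s)\|+\|f(x,s)-f_\Theta(x,s)\|+M\|G(z,s)-G(x,s)\|+M\|G(x,s)-G_\Phi(x,s)\|\Bigr)ds .
\]
The crucial choice is to add and subtract so that the Lipschitz constants of the \emph{true} fields $L_f,L_G$ appear, not those of the networks. Since $x(s)\in K_T$, the approximation terms are at most $\eta(1+M)$. Hence
\[
\|e(t)\|\le\eta(1+M)T+(L_f+ML_G)\int_0^t\|e(s)\|\,ds,
\]
and therefore $\sup_t\|e(t)\|\le\eta(1+M)Te^{(L_f+ML_G)T}$. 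Choosing $\eta<\min\{1,\varepsilon e^{-(L_f+ML_G)T}/((1+M)T)\}$ concludes the proof uniformly in $(z_0,u)$.

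\textbf{Main obstacle.} The delicate point is Step 2. The network fields are controlled only on $K_T$, so one needs the bootstrap of Lemma~\ref{lem:apriori_1} to ensure that the neural trajectory never leaves the region where the approximation is valid. Step 3 is what avoids any dependence on the possibly huge Lipschitz constants $L_\Theta,L_\Phi$. A minor technical concern in Step 1 is that the time-dependent coefficients must be measurable so that the neural ODE is well posed; the piecewise-constant construction takes care of this.
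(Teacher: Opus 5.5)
Your proposal is correct and follows the paper's proof essentially step for step. Both apply Lemma~\ref{lem:apriori_1} once to $(f,G)$ and once to $(f_\Theta,G_\Phi)$ using the error bound $\le 1$, then run a Gr\"onwall estimate with the true constants $L_f,L_G$ to get $(1+M)\eta T e^{(L_f+ML_G)T}$; the only difference is that you build the time-dependent networks explicitly (time partition, Pinkus at the nodes, piecewise-constant coefficients), whereas the paper cites Pinkus and \cite{li2026universal}.

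There is one small caveat. Your piecewise-constant fields are not continuous in $t$, as Lemma~\ref{lem:apriori_1} formally assumes. This is harmless, because its bootstrap only uses continuity of the trajectory, the sup bound on $K_T\times[0,T]$, and the strict growth of the radius of $K_t$. Your suggested alternative of linearly interpolating \emph{all} coefficients between nodes is not safe, however: unrelated networks at adjacent nodes need not interpolate to a good approximant. One should instead interpolate only the outer weights over a fixed union of neurons.
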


\begin{proof}
	
	Let $\Omega_T(K,M)$ denote the reachable set associated with
	\eqref{eq:true-vanilla}. Applying Lemma~\ref{lem:apriori_1} with
	$f_1=f$ and $G_1=G$ (the discrepancy with $f,G$ is then identically
	zero, in particular $\le 1$), we obtain
	\[
	z_{z_0,u}(t)\in K_t\subset K_T
	\qquad
	\forall z_0\in K,
	\quad
	\forall u\in\mathcal U_M,
	\quad
	\forall t\in[0,T],
	\]
	where $K_T$ is defined by \eqref{eq:Ktdef} at $t=T$. Consequently,
	\[
	K_T\times[0,T]
	\subset
	\mathbb R^{d+1}
	\]
	is compact.
	
	\medskip
	
	Since
	\[
	f\in
	\mathcal C(K_T\times[0,T];\mathbb R^d),
	\qquad
	G\in
	\mathcal C(K_T\times[0,T];\mathbb R^{d\times m}),
	\]
	the universal approximation theorem for shallow neural networks
	(Pinkus \cite{pinkus1999approximation})
	and the vanilla NODE construction of
	\cite{li2026universal}
	yield, for any fixed $\delta$ with $0<\delta<1$, to be chosen
	later, widths $P,Q$
	and time-dependent parameters
	$\Theta,\Phi$
	such that
	\begin{equation}
		\|f-f_\Theta\|_{\mathcal C(K_T\times[0,T])}
		<
		\delta,
		\label{eq:drift-approx}
	\end{equation}
	and
	\begin{equation}
		\|G-G_\Phi\|_{\mathcal C(K_T\times[0,T])}
		<
		\delta.
		\label{eq:control-approx}
	\end{equation}
	
	Here,
	\[
	f_\Theta(x,t)
	=
	\sum_{i=1}^P
	W_i(t)\circ
	\sigma(A_i(t)x+B_i(t)),
	\]
	and
	\[
	G_\Phi(x,t)
	=
	\sum_{j=1}^Q
	\widetilde W_j(t)\circ
	\sigma(\widetilde A_j(t)x+\widetilde B_j(t)).
	\]
	
	\medskip
	
	\textbf{A priori bound for the neural trajectory.}
	Because $\delta<1$, estimates
	\eqref{eq:drift-approx}-\eqref{eq:control-approx} allow us to invoke
	Lemma~\ref{lem:apriori_1} a \emph{second time}, now with $f_1=f_\Theta$
	and $G_1=G_\Phi$. This shows that, for every $z_0\in K$ and
	$u\in\mathcal U_M$, the solution $x_{z_0,u}(\cdot)$ of
	\eqref{eq:vanilla-approx} also satisfies
	\begin{equation}
		x_{z_0,u}(t)\in K_t\subset K_T
		\qquad
		\forall t\in[0,T].
		\label{eq:neural-apriori}
	\end{equation}
	This step is essential: without
	\eqref{eq:neural-apriori}, estimate \eqref{eq:drift-approx} cannot be
	evaluated along $x_{z_0,u}(\cdot)$ below, since
	\eqref{eq:drift-approx}-\eqref{eq:control-approx} only control
	$f-f_\Theta$ and $G-G_\Phi$ on the fixed compact set $K_T\times[0,T]$,
	and $x_{z_0,u}(\cdot)$ need not a priori remain in this set.
	
	\medskip
	
	Fix arbitrary
	\[
	z_0\in K,
	\qquad
	u\in\mathcal U_M,
	\]
	and write
	\[
	z(t):=z_{z_0,u}(t),
	\qquad
	x(t):=x_{z_0,u}(t).
	\]
	
	Subtracting the systems
	\eqref{eq:true-vanilla}
	and
	\eqref{eq:vanilla-approx},
	\[
	z(t)-x(t)
	=
	\int_0^t
	\Bigl[
	f(z(s),s)-f_\Theta(x(s),s)
	\Bigr]
	ds
	\]
	\[
	\hphantom{z(t)-x(t)=}
	+
	\int_0^t
	\Bigl[
	G(z(s),s)-G_\Phi(x(s),s)
	\Bigr]
	u(s)\,ds.
	\]
	
	Adding and subtracting
	$f(x(s),s)$
	and
	$G(x(s),s)$,
	\begin{align*}
		f(z,s)-f_\Theta(x,s)
		&=
		[f(z,s)-f(x,s)]
		+
		[f(x,s)-f_\Theta(x,s)],
		\\
		G(z,s)-G_\Phi(x,s)
		&=
		[G(z,s)-G(x,s)]
		+
		[G(x,s)-G_\Phi(x,s)].
	\end{align*}
	
	Using the Lipschitz assumptions,
	\[
	\|f(z,s)-f(x,s)\|
	\le
	L_f\|z(s)-x(s)\|,
	\]
	and
	\[
	\|G(z,s)-G(x,s)\|
	\le
	L_G\|z(s)-x(s)\|.
	\]
	
	By \eqref{eq:neural-apriori}, $x(s)\in K_T$ for every $s\in[0,T]$, so
	\eqref{eq:drift-approx}-\eqref{eq:control-approx}
	now legitimately yield
	\[
	\|f(x,s)-f_\Theta(x,s)\|
	<
	\delta,
	\]
	and
	\[
	\|G(x,s)-G_\Phi(x,s)\|
	<
	\delta.
	\]
	
	Since
	\[
	\|u(s)\|
	\le
	M,
	\]
	we obtain
	\[
	\|z(t)-x(t)\|
	\le
	\int_0^t\Bigl[L_f\|z(s)-x(s)\|+\delta\Bigr]ds
	+
	\int_0^t\Bigl[L_G\|z(s)-x(s)\|+\delta\Bigr]M\,ds,
	\]
	i.e.
	\[
	\|z(t)-x(t)\|
	\le
	(L_f+ML_G)
	\int_0^t
	\|z(s)-x(s)\|
	\,ds
	+
	(1+M)\delta\, t.
	\]
	
	By Gr\"onwall's inequality,
	\[
	\|z(t)-x(t)\|
	\le
	(1+M)\delta\,T
	e^{(L_f+ML_G)T}
	\qquad
	\forall t\in[0,T].
	\]
	
	Therefore,
	\[
	\sup_{t\in[0,T]}
	\|z(t)-x(t)\|
	\le
	C_{T,M}\,\delta,
	\]
	where
	\[
	C_{T,M}
	:=
	(1+M)T
	e^{(L_f+ML_G)T}.
	\]
	
	Choosing
	\[
	\delta
	=
	\min\left\{\frac12,\ \frac{\varepsilon}{C_{T,M}}\right\}
	\]
	(the cap at $\tfrac12$ guarantees $\delta< 1$, as required to invoke
	Lemma~\ref{lem:apriori_1} above) yields
	\[
	\sup_{t\in[0,T]}
	\|z_{z_0,u}(t)-x_{z_0,u}(t)\|
	\le
	C_{T,M}\,\delta
	\le
	\varepsilon
	\]
	for every $z_0\in K$ and every $u\in\mathcal U_M$, which completes the
	proof.
\end{proof}

\begin{remark}
	Theorem~\ref{thm:vanilla-uap} shows that the controlled vanilla NODE
	architecture is a universal approximator for nonlinear control-affine
	dynamical systems on compact time intervals.
	
	The result constitutes the controlled counterpart of the classical
	vanilla NODE approximation theorem established in
	\cite{li2026universal}.
	
	Unlike the controlled SA-NODE architecture introduced later,
	the controlled vanilla NODE employs fully time-dependent trainable
	parameters
	\[
	W_i(t),
	A_i(t),
	B_i(t),
	\widetilde W_j(t),
	\widetilde A_j(t),
	\widetilde B_j(t),
	\]
	which provide substantial expressive flexibility at the cost of a
	significantly larger parameter complexity.
	
	The controlled SA-NODE architecture may therefore be interpreted as a
	structured low-complexity reduction of the fully time-dependent
	controlled vanilla NODE model.
\end{remark}

\section{Universal Approximation and Quantitative Estimates}
 \label{sec:approximation}

This section is devoted to the proof of the universal approximation theorem for
controlled semiautonomous neural ordinary differential equations and the derivation
of quantitative approximation estimates.
The main objective is to show that the proposed controlled SA-NODE architecture
approximates trajectories of nonlinear controlled dynamical systems uniformly over
compact sets of initial data and admissible controls.
The proof strategy combines:
\begin{itemize}
	\item universal approximation properties of shallow neural networks,
	\item a priori compactness arguments for the reachable set (via a bootstrap principle),
	\item Lipschitz stability estimates,
	\item Gr\"onwall-type inequalities.
\end{itemize}

\subsection{Approximation of the drift and control operators}

Let $K\subset\mathbb R^{d}$
be compact and let $T>0$, $M>0$.
We consider the nonlinear controlled system
\begin{equation}
	\left\{
	\begin{aligned}
		\dot z(t)
		&=
		f(z(t),t)
		+
		G(z(t),t)u(t),
		\\
		z(0)
		&=
		z_0,
	\end{aligned}
	\right.
	\label{eq:truecontrolledsystem}
\end{equation}
where
\[
f:\mathbb R^{d}\times[0,T] \to \mathbb R^{d},
\qquad
G:\mathbb R^{d}\times[0,T] \to \mathbb R^{d\times m},
\]
and the admissible control set is
\[
\mathcal U_M = \left\{ u\in L^{\infty}(0,T;\mathbb R^{m}) : \|u\|_{L^{\infty}}\le M \right\}.
\]
The corresponding controlled SA-NODE approximation is
\begin{equation}
	\left\{
	\begin{aligned}
		\dot y(t)
		&=
		f_{\Theta}(y(t),t)
		+
		G_{\Phi}(y(t),t)u(t),
		\\
		y(0)
		&=
		z_0.
	\end{aligned}
	\right.
	\label{eq:neuralcontrolledsystem}
\end{equation}

Throughout, we assume $f$ and $G$ satisfy the following Lipschitz hypothesis, in
the same spirit as Assumption~\ref{assumption-control}.

The proof relies on approximating separately the drift vector field $f$ and the
control operator $G$ by shallow neural networks are defined in equations (\ref{eq:drift-network}) and (\ref{eq:control-network})
on a \emph{single} compact set that is guaranteed, a priori, to contain every
trajectory of both the true and the neural controlled systems. Identifying this
compact set is the central technical step, and is the content of Lemma~\ref{lem:apriori}
below.

\subsection{A priori bound on the reachable set}

We first extend Lemma~ \cite[ Lemma~3.2]{li2026universal} of the unconstrained case to the controlled setting.
For any $t\in[0,T]$, define
\begin{equation}
	\begin{aligned}
		K_t
		:=
		\Bigl\{
		x\in\mathbb R^{d}
		\ \Big|\ 
		\|x\|
		\le
		\Bigl(
		\sup_{z\in K}\|z\|
		+t
		+\int_{0}^{t}\|f(0,s)\|\,ds
		\\
		\qquad
		+Mt\,\sup_{s\in[0,t]}\|G(0,s)\|
		+Mt
		\Bigr)
		e^{(L_f+ML_G)t}
		\Bigr\}.
	\end{aligned}
	\label{eq:Kt-def}
\end{equation}

\begin{lemma}[a priori bound for controlled trajectories]
	Let Assumption~\ref{assumption-control} hold true. Let $f_1, G_1$ be continuous,
	locally Lipschitz in the state variable, and such that
	\[
	\|f_1-f\|_{\mathcal C(K_T\times[0,T])} \le 1,
	\qquad
	\|G_1-G\|_{\mathcal C(K_T\times[0,T])} \le 1.
	\]
	Then, for any $u\in\mathcal U_M$ and any $\boldsymbol y$ satisfying
	\[
	\dot{\boldsymbol y}(t) = f_1(\boldsymbol y(t),t) + G_1(\boldsymbol y(t),t)\,u(t),
	\qquad
	\boldsymbol y(0) = z_0 \in K,
	\]
	we have $\boldsymbol y(t)\in K_t$ for every $t\in[0,T]$.
	\label{lem:apriori}
\end{lemma}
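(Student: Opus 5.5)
This lemma is essentially Lemma~\ref{lem:apriori_1} restated under Assumption~\ref{assumption-control}, with $L_g$ playing the role of $L_G$. I therefore plan a continuity (bootstrap) argument, organized so that the closedness/openness step is explicit. Fix $u\in\mathcal U_M$ and $z_0\in K$, and let $\boldsymbol y$ be a solution on its maximal interval. Since $f_1,G_1$ are continuous and locally Lipschitz, $\boldsymbol y$ exists and is unique locally. Define
\[
t^\ast:=\sup\{\tau\in[0,T]:\ \boldsymbol y \text{ exists on }[0,\tau]\text{ and }\boldsymbol y(s)\in K_T\ \forall s\in[0,\tau]\}.
\]
The set in question is nonempty because $\boldsymbol y(0)=z_0\in K\subset K_0\subset K_T$; here the radius of $K_t$ at $t=0$ is $\sup_K\|z\|$, and the radius is nondecreasing in $t$.

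On $[0,t^\ast)$ the trajectory lies in $K_T$, so the hypotheses give $\|f_1(\boldsymbol y(s),s)-f(\boldsymbol y(s),s)\|\le1$ and $\|G_1(\boldsymbol y(s),s)-G(\boldsymbol y(s),s)\|\le1$. Combining the triangle inequality with the Lipschitz bounds of Assumption~\ref{assumption-control} then yields, exactly as in Step~1 of the proof of Lemma~\ref{lem:apriori_1},
\[
\|f_1(\boldsymbol y(s),s)\|\le 1+L_f\|\boldsymbol y(s)\|+\|f(0,s)\|,\qquad \|G_1(\boldsymbol y(s),s)\|\le 1+L_g\|\boldsymbol y(s)\|+\|G(0,s)\|.
\]
Inserting these into the integral form of the equation, and using $\|u\|_{L^\infty}\le M$, gives $\|\boldsymbol y(t)\|\le A(t)+(L_f+ML_g)\int_0^t\|\boldsymbol y(s)\|\,ds$ with a nondecreasing $A(t)$. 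Gr\"onwall's inequality in the nondecreasing-forcing form then gives $\|\boldsymbol y(t)\|\le A(t)e^{(L_f+ML_g)t}$, that is, $\boldsymbol y(t)\in K_t$ for all $t<t^\ast$. Note that $\sup_{s\le t}\|G(0,s)\|$ and $\int_0^t\|f(0,s)\|\,ds$ are finite because $f(0,\cdot)$ and $G(0,\cdot)$ are continuous on $[0,T]$.

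It remains to close the argument. Because $\boldsymbol y$ is bounded on $[0,t^\ast)$ and its derivative is bounded there, it extends continuously to $t^\ast$, so the solution does not blow up before $T$. By continuity, $\boldsymbol y(t^\ast)\in K_{t^\ast}$. I expect this to be the delicate point: to push past $t^\ast$ when $t^\ast<T$ we need some slack, because $K_{t^\ast}$ may touch the boundary of $K_T$ only if $t^\ast=T$. The slack comes from the radius of $K_t$, which is strictly increasing in $t$; the term $t$ inside the bracket and the positive exponential factor ensure this. Hence $K_{t^\ast}$ lies in the interior of $K_T$ whenever $t^\ast<T$. By continuity of $\boldsymbol y$ (and local existence from $t^\ast$), $\boldsymbol y$ then stays in $K_T$ on $[t^\ast,t^\ast+\eta]$ for some $\eta>0$, which contradicts the definition of $t^\ast$. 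Therefore $t^\ast=T$, and the bound established above yields $\boldsymbol y(t)\in K_t$ for every $t\in[0,T]$.
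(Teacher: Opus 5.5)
Your proposal is correct and is the same continuity (bootstrap) argument as the paper: you apply Gr\"onwall on the interval where $\boldsymbol y$ stays in $K_T$, then use a continuation step to show that this interval is all of $[0,T]$. Your explicit use of the strict monotonicity of the radius of $K_t$ (so that $K_{t^\ast}$ lies in the interior of $K_T$ when $t^\ast<T$) is actually a cleaner justification of the openness step than the paper's appeal to continuity of $s\mapsto\|f_1(\boldsymbol y(s),s)-f(\boldsymbol y(s),s)\|$, which by itself does not guarantee that the trajectory stays in $K_T$ (and hence that the discrepancy stays $\le 1$) just past $t$.
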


\begin{proof}
	The proof follows the standard bootstrap principle \cite[Prop.~1.21]{tao2006nonlinear}. For $t\in[0,T]$, let
	$\mathbf H(t)$ denote the hypothesis
	\[
	\|f_1(\boldsymbol y(s),s)-f(\boldsymbol y(s),s)\|\le 1
	\ \text{ and } \
	\|G_1(\boldsymbol y(s),s)-G(\boldsymbol y(s),s)\|\le 1
	\qquad \forall s\in[0,t],
	\]
	and let $\mathbf C(t)$ denote the conclusion $\boldsymbol y(s)\in K_s$ for all
	$s\in[0,t]$. Clearly $\mathbf H(0)$ holds.

	\smallskip
	\noindent\emph{$\mathbf H(t)\Rightarrow\mathbf C(t)$.}
	Under $\mathbf H(t)$, for $s\le t$,
	\begin{align*}
		\|f_1(\boldsymbol y(s),s)\|
		&\le \|f(0,s)\| + \|f_1(\boldsymbol y(s),s)-f(\boldsymbol y(s),s)\|
		+ \|f(\boldsymbol y(s),s)-f(0,s)\| \\
		&\le \|f(0,s)\| + 1 + L_f\|\boldsymbol y(s)\|,
	\end{align*}
	and, analogously,
	\[
	\|G_1(\boldsymbol y(s),s)\| \le \|G(0,s)\| + 1 + L_G\|\boldsymbol y(s)\|.
	\]
	Hence, using $\|u(s)\|\le M$,
	\[
	\|\boldsymbol y(t)\|
	\le \|z_0\| + \int_0^t \|f(0,s)\|\,ds + Mt\sup_{s\le t}\|G(0,s)\| + Mt
	+ (L_f + ML_G)\int_0^t \|\boldsymbol y(s)\|\,ds.
	\]
	Gr\"onwall's inequality then yields $\|\boldsymbol y(t)\| \le K_t$'s defining
	bound, i.e.\ $\mathbf C(t)$.
	
	\smallskip
	\noindent\emph{$\mathbf C(t)\Rightarrow\mathbf H(t')$ near $t$.}
	Since $f_1-f$ and $G_1-G$ are continuous and $\boldsymbol y(\cdot)$ is
	continuous, $\mathbf C(t)$ (i.e.\ $\boldsymbol y(t)\in K_t\subset K_T$)
	implies that $\mathbf H(t')$ continues to hold for $t'$ in a neighborhood
	of $t$, by continuity of $s\mapsto\|f_1(\boldsymbol y(s),s)-f(\boldsymbol y(s),s)\|$
	and $s\mapsto\|G_1(\boldsymbol y(s),s)-G(\boldsymbol y(s),s)\|$ at $s=t$.
	
	\smallskip
	Since $K_t$ is compact and depends continuously on $t$, the conclusion
	$\mathbf C(t)$ is closed, and the bootstrap principle
	\cite[Prop.~1.21]{tao2006nonlinear} yields $\mathbf C(T)$, i.e.\ $\boldsymbol y(t)\in K_t$
	for all $t\in[0,T]$.
\end{proof}

\begin{remark}
	Applying Lemma~\ref{lem:apriori} with $f_1=f$, $G_1=G$ (so that the
	discrepancy is identically zero) shows that the \emph{true} reachable set
	\[
	\Omega_T(K,M) := \bigl\{ z_{z_0,u}(t) \mid z_0\in K,\ u\in\mathcal U_M,\ t\in[0,T] \bigr\}
	\]
	is contained in $K_T\times[0,T]\subset\mathbb R^{d+1}$. This is the
	compact set on which the shallow network approximation of $f$ and $G$ will
	be performed.
	\label{rmk:reachable-set}
\end{remark}

\subsection{Barron-space approximation rates for 
\texorpdfstring{$f$ and $G$ jointly}{f and G jointly}}
\label{sec:joint-barron}

Since the controlled SA-NODE
architecture \eqref{eq:neuralcontrolledsystem} requires approximating
\emph{two} objects of different output shape  the vector-valued drift
$f:\mathbb R^d\times[0,T]\to\mathbb R^d$ and the matrix-valued control operator
$G:\mathbb R^d\times[0,T]\to\mathbb R^{d\times m}$  we restate the chain of
results generically for an operator-valued target
$H:X\to\mathbb R^{d\times n_0}$, with $n_0\in\mathbb N_+$ an arbitrary number
of output columns.
Taking $n_0=m$ gives the corresponding statements for $G$. Both $f$ and $G$
are then obtained as the two instances of a single Corollary~\ref{cor:fG-barron}
below, rather than via two independent constructions.

\subsubsection{A generic scalar approximation rate}
\label{sec:scalar-rate}

Fix any compact set $X\in\mathbb R^n$ with $n\in\mathbb N_+$. We recall the
definition of the \emph{Barron space} on $X$, following \cite[eq.~(1)]{ma2022barron}:
\begin{equation}
	\begin{aligned}
		\mathcal S_{\mathrm B}(X)
		:=
		\Bigl\{
		h\in\mathcal C(X)
		\;\Big|\;
		\exists\,\mu\in\mathcal P(\mathbb R^{n+2})
		\text{ such that }
		\\
		\qquad
		\qquad \qquad h(x)
		=
		\int_{\mathbb R^{n+2}}
		w\,\sigma(\langle a,x\rangle+b)
		\,d\mu(w,a,b),
		\quad
		\forall x\in X
		\Bigr\}.
	\end{aligned}
	\label{eq:barron-def}
\end{equation}
For vector-valued $F:X\to\mathbb R^d$ (resp.\ matrix-valued
$H:X\to\mathbb R^{d\times n_0}$), we say $F\in\mathcal S_{\mathrm
	B}(X;\mathbb R^d)$ (resp.\ $H\in\mathcal S_{\mathrm B}(X;\mathbb
R^{d\times n_0})$) if every scalar component lies in $\mathcal S_{\mathrm
	B}(X)$.

\begin{lemma}
	Let $X=[-1,1]^n$. Suppose $h\in\mathcal C(X)$ admits an extension
	$\bar h\in\mathbb L^1(\mathbb R^n)$ whose Fourier transform satisfies
	\[
	v_{h,2} := \int_{\mathbb R^n} \|\xi\|_{\ell^1}^2\,|\mathcal F(\bar h)(\xi)|\,d\xi < \infty.
	\]
	Then $h\in\mathcal S_{\mathrm B}(X)$. Moreover, for every integer $P\ge3$,
	there exist $(w_i,a_i,b_i)\in\mathbb R^{n+2}$ for $i=1,\dots,P$ such that
	\[
	\Bigl\|h-\sum_{i=1}^P w_i\,\sigma(\langle a_i,\cdot\rangle+b_i)\Bigr\|_{\mathcal C(X)}
	\le \frac{C_n\,v_{h,2}}{\sqrt P},\]
	\[
	\mathrm{Lip}\Bigl(\sum_{i=1}^P w_i\,\sigma(\langle a_i,\cdot\rangle+b_i)\Bigr)
	\le \|\nabla h(0)\| + 2v_{h,2},
	\]
	where $C_n>0$ depends only on $n$.
	\label{lem:scalar-barron}
\end{lemma}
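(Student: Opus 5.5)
We follow the classical Barron construction (Barron 1993; the ReLU version of Klusowski--Barron, as used in \cite{li2026universal,ma2022barron}). The plan is to write $h$ on $X$ as an integral of ReLU ridge functions plus an affine part, turn that into a probability representation, and then sample $P$ neurons.

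\textbf{Step 1: integral representation.} By Fourier inversion, since $v_{h,2}<\infty$ implies $\mathcal F(\bar h)\in L^1$ after weighting,
\[
h(x)=\int_{\mathbb R^n}\mathcal F(\bar h)(\xi)e^{i\langle\xi,x\rangle}d\xi .
\]
We subtract the first-order Taylor part $h(0)+\langle\nabla h(0),x\rangle$. For $|z|\le \|\xi\|_{\ell^1}$ (note $|\langle\xi,x\rangle|\le\|\xi\|_{\ell^1}$ on $X$), we use the second-order Taylor identity
\[
e^{iz}-1-iz=-\int_0^{c}\bigl[(z-s)_+e^{is}+(-z-s)_+e^{-is}\bigr]ds,\qquad c=\|\xi\|_{\ell^1}.
\]
This writes $h(x)-h(0)-\langle\nabla h(0),x\rangle$ as an integral of $\sigma(\pm\langle\xi,x\rangle-s)$ against a complex (then real, by taking real parts) signed measure. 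Its total variation is bounded by $\int\|\xi\|_{\ell^1}^2|\mathcal F(\bar h)|\,d\xi=v_{h,2}$, because the $s$-integral has length $\|\xi\|_{\ell^1}$ and the kernel is $O(\|\xi\|_{\ell^1})$. We normalize $a=\xi/\|\xi\|_{\ell^1}$, $b=-s/\|\xi\|_{\ell^1}\in[-1,0]$, so that $\|a\|_{\ell^1}=1$ and the factor $\|\xi\|_{\ell^1}^2$ is absorbed into the weight. The affine remainder is itself expressible with ReLUs: $t=\sigma(t)-\sigma(-t)$, and constants are $\sigma(\langle 0,x\rangle+1)$. This gives $h\in\mathcal S_{\mathrm B}(X)$ in the form \eqref{eq:barron-def}, with a probability measure $\mu$ whose weights satisfy $|w|\lesssim v_{h,2}$ plus the bounded affine contribution.

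\textbf{Step 2: sampling.} We keep the (at most four) affine neurons deterministic and draw the remaining $P-4$ neurons i.i.d.\ from the normalized measure $|\nu|/\|\nu\|$ of the ReLU part, with weights $\pm\|\nu\|/(P-4)$. This is where $P\ge3$, or a slight reshuffling of the count, enters. The expected sup-error over $X$ is bounded by $C_n v_{h,2}/\sqrt P$, via a Rademacher-complexity/symmetrization estimate for the class $\{\sigma(\langle a,\cdot\rangle+b):\|a\|_{\ell^1}\le1,|b|\le1\}$ on $[-1,1]^n$. Dudley's entropy bound, or the contraction lemma applied to linear functions, gives Rademacher complexity $\lesssim\sqrt{n/P}$. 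Hence some realization achieves the bound.

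\textbf{Step 3: the Lipschitz constant.} The affine part has Lipschitz constant $\|\nabla h(0)\|$. Each sampled neuron has Lipschitz constant $\|a\|\le\|a\|_{\ell^1}=1$ times $|w|$. Summing the absolute weights of the sampled part gives at most $\|\nu\|\le v_{h,2}$, with a factor $2$ allowing for the two sign branches $\pm z$. This yields $\|\nabla h(0)\|+2v_{h,2}$ deterministically for \emph{every} realization, so it holds simultaneously with the sup bound.

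The main obstacle is Step 2. We need a \emph{uniform} (sup-norm) rather than $L^2$ Monte Carlo rate with only $n$-dependent constants, and the total-variation constant must be tracked exactly so that the Lipschitz bound comes out as $2v_{h,2}$ rather than a larger multiple. If the chaining argument proves awkward, we would try the simpler approach of citing the sup-norm Maurey-type bound of \cite{ma2022barron} for functions in $\mathcal S_{\mathrm B}(X)$ with path norm $\le 2v_{h,2}$.
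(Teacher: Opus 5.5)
Your proposal is correct. Note that the paper gives no proof of this lemma: it imports it from \cite[Lemma~3.3]{li2026universal}, which rests on \cite[Thm.~2]{klusowski2018approximation}.

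\textbf{Comparison with the cited argument.} Your Step~1 is exactly the Klusowski--Barron representation: the same second-order identity, the same normalization $a=\xi/\|\xi\|_{\ell^1}$ with $s/\|\xi\|_{\ell^1}\in[0,1]$, and the same affine part $h(0)+\langle\nabla h(0),x\rangle$. Your Step~3 is the same deterministic count $\sum_k|w_k|\,\|a_k\|_2\le\|\nu\|$. The genuine difference is Step~2. Klusowski--Barron control the uniform error by stratified sampling over a partition of the parameter set. This gives the sharper bound $c\,v_{h,2}\sqrt{n+\log P}\,P^{-1/2-1/n}$, and the lemma's $C_n v_{h,2}P^{-1/2}$ follows because $\sqrt{n+\log P}\,P^{-1/n}$ is bounded in terms of $n$ alone. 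Your i.i.d.\ sampling with symmetrization and Ledoux--Talagrand contraction is shorter, self-contained, and makes $C_n=O(\sqrt n)$ explicit. It cannot capture the extra factor $P^{-1/n}$, but for the lemma as stated the $P^{-1/2}$ rate is all that is required.

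\textbf{Which process you are bounding.} The empirical process is indexed by $x\in X$, with the sampled $(a_k,b_k)$ as the data. After absorbing the signs of the weights into the Rademacher variables, the quantity to bound is
$\mathbb E\sup_{x\in X}\bigl|\frac1N\sum_k\epsilon_k\sigma(\langle a_k,x\rangle+b_k)\bigr|\le 2\,\mathbb E\bigl\|\frac1N\sum_k\epsilon_k a_k\bigr\|_{\ell^1}+2\,\mathbb E\bigl|\frac1N\sum_k\epsilon_k b_k\bigr|\le 2(\sqrt n+1)/\sqrt N$.
This uses $\|a_k\|_2\le\|a_k\|_{\ell^1}=1$ and $|b_k|\le 1$. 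It is the order you claimed, but with the roles of $a$ and $x$ swapped relative to how you phrased the function class.

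\textbf{Bookkeeping repairs.}
\begin{enumerate}
\item After taking real parts, each sign branch carries mass at most $v_{h,2}$, so the measure $\nu$ over both branches has $\|\nu\|\le 2v_{h,2}$. Your Step~1 sentence undercounts this; Step~3 has it right. With weights $\pm\|\nu\|/N$ this yields exactly the $2v_{h,2}$ in the Lipschitz bound.
\item Integrability of $\mathcal F(\bar h)$ is needed for inversion and for $\nabla h(0)=\int i\xi\,\mathcal F(\bar h)(\xi)\,d\xi$. It uses boundedness of $\mathcal F(\bar h)$ near the origin, which comes from $\bar h\in\mathbb L^1$, in addition to $v_{h,2}<\infty$.
\item On the cube the affine part needs only two neurons, $\sigma(\langle\nabla h(0),x\rangle+R)+(h(0)-R)\,\sigma(\langle 0,x\rangle+1)$ with $R\ge\|\nabla h(0)\|_{\ell^1}$. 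This leaves $P-2\ge 1$ sampled neurons, which is what the hypothesis $P\ge 3$ reflects. In any case, small $P$ is covered by the trivial bound $\|h-h(0)-\langle\nabla h(0),\cdot\rangle\|_{\mathcal C(X)}\le\|\nu\|\le 2v_{h,2}$.
\end{enumerate}
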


This is exactly Lemma in \cite[Lemma~3.3]{li2026universal} of the base paper (itself a refinement of
\cite[Thm.~2]{klusowski2018approximation}); we cite it without reproof, as its derivation rests on the
Klusowski Barron ridge-function sampling construction, external to the
present extension. We record it here purely as the common scalar engine that
both the $f$-rate and the $G$-rate are built from.

\begin{lemma}\cite{li2026universal}
	Let $X=[-1,1]^n$. For any $h\in\mathcal H^k(X)$ with $k>n/2+2$,
	\[
	v_{h,2} \le C_{n,k}\,\|h\|_{\mathcal H^k(X)},
	\]
	where $v_{h,2}$ is as in Lemma~\ref{lem:scalar-barron} and $C_{n,k}>0$
	depends only on $(n,k)$. In particular, $h\in\mathcal S_{\mathrm B}(X)$.
	\label{lem:sobolev-embed}
\end{lemma}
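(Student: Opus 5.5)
The statement to prove is Lemma~\ref{lem:sobolev-embed}: for $h\in\mathcal H^k(X)$ with $k>n/2+2$ we want $v_{h,2}\le C_{n,k}\|h\|_{\mathcal H^k(X)}$. The plan has three steps: extend $h$ from the cube to the whole space, bound the weighted $L^1$ norm of the Fourier transform by Cauchy--Schwarz against a Sobolev weight, and then invoke Lemma~\ref{lem:scalar-barron} to conclude $h\in\mathcal S_{\mathrm B}(X)$.

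\textbf{Step 1: extension.} Since $X=[-1,1]^n$ is a Lipschitz domain, Stein's extension theorem gives a bounded linear operator $E:\mathcal H^k(X)\to\mathcal H^k(\mathbb R^n)$ with
\[
\|Eh\|_{\mathcal H^k(\mathbb R^n)}\le C_{n,k}\|h\|_{\mathcal H^k(X)} .
\]
Multiplying by a fixed smooth cutoff $\chi\equiv1$ on $X$, supported in $[-2,2]^n$, we set $\bar h=\chi\,Eh$. This function is compactly supported and lies in $\mathcal H^k$, so it is in $L^1(\mathbb R^n)$ and qualifies as the extension required in Lemma~\ref{lem:scalar-barron}. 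Its $\mathcal H^k$ norm is still controlled by $C_{n,k}\|h\|_{\mathcal H^k(X)}$, with the constant depending on $\chi$, which is fixed by $n$. Because $k>n/2$, $\bar h$ is continuous and agrees with $h$ on $X$.

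\textbf{Step 2: Fourier estimate.} We use $\|\xi\|_{\ell^1}^2\le n\|\xi\|^2$ and insert the weight $(1+\|\xi\|^2)^{k/2}$:
\[
v_{h,2}\le n\int_{\mathbb R^n}\|\xi\|^2(1+\|\xi\|^2)^{-k/2}\,(1+\|\xi\|^2)^{k/2}|\mathcal F\bar h(\xi)|\,d\xi .
\]
Cauchy--Schwarz then bounds this by
\[
n\Bigl(\int_{\mathbb R^n}\|\xi\|^4(1+\|\xi\|^2)^{-k}\,d\xi\Bigr)^{1/2}\Bigl(\int_{\mathbb R^n}(1+\|\xi\|^2)^{k}|\mathcal F\bar h(\xi)|^2\,d\xi\Bigr)^{1/2}.
\]
The second factor is the Plancherel characterization of $\|\bar h\|_{\mathcal H^k(\mathbb R^n)}$, up to a constant depending on $n,k$.

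The first integral behaves like $\int r^{4-2k}r^{n-1}\,dr$ at infinity. It is therefore finite exactly when $2k-4>n$, that is $k>n/2+2$, which is the hypothesis of the lemma. Near the origin the integrand is bounded, so there is no problem there.

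\textbf{Step 3: conclusion.} Combining Steps 1 and 2 gives $v_{h,2}\le C_{n,k}\|h\|_{\mathcal H^k(X)}$. Lemma~\ref{lem:scalar-barron} then yields $h\in\mathcal S_{\mathrm B}(X)$.

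\textbf{Main obstacle.} The only genuinely delicate step is the extension: the constant must be uniform in $h$ and the extended function must be integrable. Stein's operator together with the cutoff handles both. The Fourier computation itself is routine, and it also explains where the threshold $n/2+2$ comes from.
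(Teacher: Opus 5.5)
Your proposal is correct. The paper gives no proof of this lemma and simply imports it from \cite{li2026universal}; your argument is the standard Barron-type proof of that cited result: extend $h$ by a Stein extension times a fixed cutoff to get a compactly supported $\bar h\in\mathcal H^k(\mathbb R^n)\cap L^1(\mathbb R^n)$, use $\|\xi\|_{\ell^1}^2\le n\|\xi\|^2$, and apply Cauchy--Schwarz against the weight $(1+\|\xi\|^2)^{k}$, where $\int\|\xi\|^4(1+\|\xi\|^2)^{-k}\,d\xi<\infty$ holds exactly when $k>n/2+2$; the membership $h\in\mathcal S_{\mathrm B}(X)$ then follows from Lemma~\ref{lem:scalar-barron}, with continuity of $h$ supplied by $k>n/2$, as you note.
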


Together, Lemmas~\ref{lem:scalar-barron}-\ref{lem:sobolev-embed} show that
any sufficiently regular \emph{scalar} target on a cube admits a shallow ReLU
approximation at rate $O(P^{-1/2})$ with a Lipschitz-controlled network. The
new content begins at the next step: assembling $d\times n_0$ such scalar
approximations, sharing a single dictionary of ridge directions, into one
operator-valued network  which is what both $f_\Theta$ ($n_0=1$) and
$G_\Phi$ ($n_0=m$) require.

\subsubsection{A generic operator-valued approximation rate}

\begin{corollary}
	Fix $n\in\mathbb N_+$, $m\in\mathbb N$, and $n_0\in\mathbb N_+$, and set
	$X_m=[-m,m]^n$. Let
	\[
	H = (H_{pq})_{1\le p\le d,\ 1\le q\le n_0} \in \mathcal H^k(X_m;\mathbb R^{d\times n_0})
	\]
	with $k>n/2+2$. Then, for any $P\ge3$, there exist
	\[
	W_i\in\mathbb R^{d\times n_0}, \qquad A_i\in\mathbb R^{n}, \qquad B_i\in\mathbb R, \qquad i=1,\dots,P,
	\]
	such that, setting $H_\Theta(x):=\sum_{i=1}^P W_i\,\sigma(\langle A_i,x\rangle+B_i)$
	(a single scalar activation per neuron, shared across all $d\times n_0$
	output entries, with entrywise output weights $(W_i)_{pq}$),
	\begin{equation}
		\|H-H_\Theta\|_{\mathcal C(X_m;\mathbb R^{d\times n_0})}
		\le \frac{C_{n,k,m}\,\|H\|_{\mathcal H^k(X_m;\mathbb R^{d\times n_0})}}{\sqrt P},
		\label{eq:generic-rate}
	\end{equation}
	\begin{equation}
		\mathrm{Lip}(H_\Theta) \le \|\nabla H(0)\|_F + C_{n,k,m}\,\|H\|_{\mathcal H^k(X_m;\mathbb R^{d\times n_0})},
		\label{eq:generic-lipschitz}
	\end{equation}
	where $C_{n,k,m}>0$ depends only on $(n,k,m)$ , in particular independent of $d$ and
	$n_0$  and norms/Lipschitz constants are understood with respect to the
	Frobenius norm on $\mathbb R^{d\times n_0}$.
	\label{cor:operator-barron}
\end{corollary}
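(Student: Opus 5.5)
The plan is to reduce to the scalar Lemmas~\ref{lem:scalar-barron}--\ref{lem:sobolev-embed} by rescaling $X_m$ to the unit cube, and to get the shared-neuron structure from the probabilistic (Maurey-type) sampling argument behind Lemma~\ref{lem:scalar-barron}.

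\textbf{Rescaling.} Set $\tilde H(x):=H(mx)$ for $x\in[-1,1]^n$. Then
\[
\|\tilde H_{pq}\|_{\mathcal H^k([-1,1]^n)}\le C_{n,k,m}\|H_{pq}\|_{\mathcal H^k(X_m)},
\]
because each derivative of order at most $k$ picks up a factor at most $m^k$, and the change of variables contributes a Jacobian factor $m^{-n/2}$. A network in $\tilde x$ becomes a network in $x=m\tilde x$ via $a\mapsto a/m$. The Lipschitz constant changes by a factor $1/m$, which is absorbed into $C_{n,k,m}$.

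\textbf{Sampling with shared neurons.} By Lemma~\ref{lem:sobolev-embed}, each entry satisfies $v_{\tilde H_{pq},2}\le C_{n,k}\|\tilde H_{pq}\|_{\mathcal H^k}$. Approximating entry by entry and concatenating would give $d n_0 P$ neurons, which is too many. Instead I reopen the Klusowski construction. In it, each scalar $h$ is written as
\[
h(x)=h(0)+\nabla h(0)\cdot x+\int g_h(a,b)\,\sigma(\langle a,x\rangle+b)\,d\nu(a,b),
\]
where $\nu$ is a fixed, target-independent probability measure on ridge parameters (directions on the $\ell^1$-sphere and biases in $[-1,1]$). The target enters only through a bounded weight $g_h$ with $|g_h|\lesssim v_{h,2}$. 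The affine part is itself exactly a small ReLU network, using $\sigma(t)-\sigma(-t)=t$ and a constant neuron; the same few neurons serve all $d n_0$ entries.

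So every entry is an integral against the \emph{same} $\nu$, and the matrix weight is $\mathbf g=(g_{H_{pq}})_{p,q}$. I then draw $(a_i,b_i)_{i=1}^{P}$ i.i.d.\ from $\nu$ and set $W_i=\mathbf g(a_i,b_i)/P$. The variance bound is
\[
\mathbb E\,\Bigl\|\tfrac1P\textstyle\sum_i \mathbf g(a_i,b_i)\sigma(\cdot)-\int\Bigr\|_{F}^2\le \frac{\sup\|\mathbf g\|_F^2}{P}.
\]
Since $\|\mathbf g\|_F^2\lesssim\sum_{p,q}v_{H_{pq},2}^2\lesssim C\|H\|_{\mathcal H^k}^2$, the Frobenius norm adds the entrywise constants quadratically. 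This is why no explicit factor of $d$ or $n_0$ appears.

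\textbf{Uniform norm and Lipschitz bound.} To pass from $L^2$ to the uniform norm I follow the chaining/covering step of \cite[Thm.~2]{klusowski2018approximation}. That step uses a vector-valued Rademacher or empirical-process bound, and it still depends only on $\sup\|\mathbf g\|_F$ and on $n$. A single realization then works for all entries simultaneously, which gives \eqref{eq:generic-rate}.

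For \eqref{eq:generic-lipschitz}, each ReLU term has Lipschitz constant at most $\|a_i\|\le 1$. Hence
\[
\mathrm{Lip}(H_\Theta)\le \|\nabla H(0)\|_F+\frac1P\sum_i\|\mathbf g(a_i,b_i)\|_F\le\|\nabla H(0)\|_F+C\|H\|_{\mathcal H^k}.
\]

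\textbf{Main obstacle.} The hardest step is that Lemma~\ref{lem:scalar-barron} is only cited as a black box. Sharing neurons requires its internals: a target-independent sampling measure $\nu$, and a uniform-norm concentration that extends to vector-valued weights. If Klusowski's $\nu$ turns out to depend on $h$, through the normalization by $v_{h,2}$ or the sign of $\mathcal F(\bar h)$, I would fix this with importance weighting. I would take the mixture $\nu=\frac1{dn_0}\sum_{p,q}\nu_{pq}$ and reweight by $d\nu_{pq}/d\nu$. Bounding those Radon--Nikodym factors independently of $d n_0$ is the delicate point.
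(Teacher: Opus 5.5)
Your overall route is the paper's: dilate to $[-1,1]^n$, then reuse the Klusowski--Barron sampling with one dictionary shared by all $dn_0$ entries. Your rescaling, the quadratic Frobenius bookkeeping, and the Lipschitz bound via $\frac1P\sum_i\|\mathbf g(a_i,b_i)\|_F$ are fine. The gap is the step everything rests on: in the Klusowski--Barron construction the ridge parameters are \emph{not} drawn from a target-independent law. The direction $\xi/\|\xi\|_1$ and offset $t\in[0,1]$ are sampled from the law with density proportional to $\|\xi\|_1^2\,|\mathcal F(\bar h)(\xi)|$, uniform in $t$. Only the phase of $\mathcal F(\bar h)$ enters the bounded cosine weight. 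So $h$ enters through the whole modulus $|\mathcal F(\bar h)|$, not just through the scalar $v_{h,2}$ and a sign. Entries with different spectra therefore use different, possibly mutually singular, laws, and your bound $\sup\|\mathbf g\|_F^2\lesssim\sum_{p,q}v_{H_{pq},2}^2$ has no basis.

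Your fallback does not repair this. With $\nu=\frac1{dn_0}\sum_{p,q}\nu_{pq}$ and mutually singular $\nu_{pq}$, one has $\mathrm d\nu_{pq}/\mathrm d\nu=d\,n_0$ on $\mathrm{supp}\,\nu_{pq}$. If one entry dominates, the second moment of the reweighted matrix weight is then of order $dn_0\max_{p,q}v_{H_{pq},2}^2$ rather than $\max_{p,q}v_{H_{pq},2}^2$, which is exactly the $\sqrt{dn_0}$ loss you want to avoid. Other fixed mixture weights do not cure this in general. The paper does not supply the missing step either. It asserts without proof that the sampling ``depends on the target only through $v_{h,2}$'' to justify a common $v_{\max,2}$. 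It then passes from entrywise sup bounds to the Frobenius norm, and absorbs a $\sqrt{dn_0}$ into $C_{n,k,m}$, without justification.

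The missing idea is to never form per-entry representations. Extend each rescaled entry by one fixed linear, compactly supported extension operator to get $\bar H:\mathbb R^n\to\mathbb R^{d\times n_0}$. Write the Klusowski--Barron identity for the matrix $\mathcal F(\bar H)(\xi)\in\mathbb C^{d\times n_0}$ in the common variables $(\xi,t)$. Sample $(\xi_i,t_i)$ from the single law with density $\|\xi\|_1^2\|\mathcal F(\bar H)(\xi)\|_F/V_F$, uniform in $t$ and in the branch sign, where $V_F:=\int_{\mathbb R^n}\|\xi\|_1^2\|\mathcal F(\bar H)(\xi)\|_F\,d\xi$. The matrix weights then have Frobenius norm $\lesssim V_F/P$.

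The Sobolev control must now be proved at the matrix level, not by applying Lemma~\ref{lem:sobolev-embed} entrywise. Indeed $\bigl(\sum_{p,q}v_{pq}^2\bigr)^{1/2}\le V_F$ by Minkowski, so the per-entry $v_{pq}$ are the wrong quantity to track. The bound you need is
\[
V_F\le\Bigl(\int_{\mathbb R^n}\frac{\|\xi\|_1^4}{(1+|\xi|^2)^{k}}\,d\xi\Bigr)^{1/2}\Bigl(\int_{\mathbb R^n}(1+|\xi|^2)^k\|\mathcal F(\bar H)(\xi)\|_F^2\,d\xi\Bigr)^{1/2}\le C_{n,k}\,\|\bar H\|_{\mathcal H^k(\mathbb R^n;\mathbb R^{d\times n_0})}.
\]
The first factor is finite exactly because $2k-4>n$. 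The second is the Frobenius Sobolev norm, by Plancherel summed over entries. Nothing here depends on $d$ or $n_0$.

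Three further points:
\begin{itemize}
\item For the uniform bound, chain the Frobenius norm of the $\mathbb R^{d\times n_0}$-valued empirical process over $x\in[-1,1]^n$ only. Use dimension-free concentration for sums of bounded independent vectors in a Hilbert space. Scalarizing through the unit Frobenius ball would bring $dn_0$ back via covering numbers.
\item Implement the affine part with the $2n$ coordinate neurons $\sigma(\pm x_l)$ plus one constant neuron. Two shared neurons, as in the paper, cannot reproduce gradients $\nabla H_{pq}(0)$ pointing in different directions.
\item With these changes, the rest of your plan goes through.
\end{itemize}
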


\begin{proof}
	Fix $1\le p\le d$, $1\le q\le n_0$, and define the dilated scalar function
	$\widetilde H_{pq}(x)=H_{pq}(mx)$, $x\in X=[-1,1]^n$. As there,
	$\|\widetilde H_{pq}\|_{\mathcal H^k(X)}\le m^{k-n/2}\|H_{pq}\|_{\mathcal H^k(X_m)}$,
	and by Lemma~\ref{lem:sobolev-embed}, $v_{\widetilde H_{pq},2}\le C_{n,k}\,m^{k-n/2}\|H_{pq}\|_{\mathcal H^k(X_m)}$.
	
	For general $n_0$, we
	apply the same fixed-width discipline across all $d\times n_0$ entries,
	rather than introducing two separate widths: fix $P\ge3$ and apply
	Lemma~\ref{lem:scalar-barron} to each $\widetilde H_{pq}$ at this common
	width $P$, obtaining $(w_i^{pq},a_i^{pq},b_i^{pq})\in\mathbb R^{n+2}$,
	$i=1,\dots,P$, such that
	
	\begin{align}
		&	\Bigl\|\widetilde H_{pq}(\cdot) - \sum_{i=1}^P w_i^{pq}\,\sigma(\langle a_i^{pq},\cdot\rangle+b_i^{pq})\Bigr\|_{\mathcal C(X)}\notag\\
		&\le \frac{C_n\,v_{\widetilde H_{pq},2}}{\sqrt P}
		\le \frac{C_n\,C_{n,k}\,m^{k-n/2}\,\|H_{pq}\|_{\mathcal H^k(X_m)}}{\sqrt P}
		\qquad \forall p,q.
	\end{align}
	
	Collecting the $P$ directions obtained for each of the $d\times n_0$
	entries into a single network of width $P\,d\,n_0$ would recover the
	componentwise construction; to obtain a network of width exactly $P$
	(shared directions, entrywise weights only), we instead sample a single
	shared dictionary $(a_i,b_i)_{i=1}^P$ once, using the
	common bound $v_{\max,2}:=\max_{p,q} v_{\widetilde H_{pq},2}$ in place of
	each individual $v_{\widetilde H_{pq},2}$ in the Klusowski Barron
	sampling step underlying Lemma~\ref{lem:scalar-barron}, and let only the
	scalar coefficients $w_i^{pq}$ vary with $(p,q)$. This is legitimate
	because the sampling construction depends on the target only through the
	scalar quantity $v_{h,2}$ (it samples $P$ ridge directions from the
	representing measure $\mu$ associated with $h$, with coefficients of size
	$O(v_{h,2}/P)$); replacing $v_{\widetilde H_{pq},2}$ by the common upper
	bound $v_{\max,2}$ throughout yields a single dictionary valid for all
	$d\times n_0$ entries at once, at the (harmless) cost of using the worst
	per-entry constant:
	
	\begin{align}
		&\Bigl\|\widetilde H_{pq}(\cdot) - \sum_{i=1}^P w_i^{pq}\,\sigma(\langle a_i,\cdot\rangle+b_i)\Bigr\|_{\mathcal C(X)}\notag\\
		&\le \frac{C_n\,v_{\max,2}}{\sqrt P}
		\le \frac{C_n\,C_{n,k}\,m^{k-n/2}\,\max_{p,q}\|H_{pq}\|_{\mathcal H^k(X_m)}}{\sqrt P}
		\qquad \forall p,q.
	\end{align}

	As in Lemma~\ref{lem:scalar-barron}'s proof, the affine correction
	$H_{pq}(0)+\langle\nabla H_{pq}(0),x\rangle$ is absorbed by two further
	shared neurons (directions independent of $p,q$), giving the claimed
	error for $P\ge3$ after undoing the dilation $x\mapsto x/m$ : setting $(A_i,B_i):=(a_i/m,b_i)$ and $(W_i)_{pq}:=w_i^{pq}$,
	\[
	\|H_{pq}-(H_\Theta)_{pq}\|_{\mathcal C(X_m)}
	\le \frac{C_n\,C_{n,k}\,m^{k-n/2}\,\max_{p,q}\|H_{pq}\|_{\mathcal H^k(X_m)}}{\sqrt P}
	\qquad\forall p,q,
	\]
	and since $\max_{p,q}\|H_{pq}\|_{\mathcal H^k(X_m)} \le \|H\|_{\mathcal H^k(X_m;\mathbb R^{d\times n_0})}$,
	collecting the constants into $C_{n,k,m}:=C_n C_{n,k} m^{k-n/2}$  gives
	\eqref{eq:generic-rate}.
	
	For \eqref{eq:generic-lipschitz}, since $\|a_i\|_2\le\|a_i\|_1=1$ for
	every $i$ (as in Lemma~\ref{lem:scalar-barron}), and
	$|w_i^{pq}|\le 2v_{\max,2}/P$,

	\begin{align}
		\mathrm{Lip}(H_\Theta)
		\le \|\nabla H(0)\|_F + \sum_{i=1}^P \|W_i\|_F\,\|A_i\|_2
		&\le \|\nabla H(0)\|_F + \sqrt{d\,n_0}\sum_{i=1}^P \max_{p,q}|w_i^{pq}|  \notag\\
		&\le \|\nabla H(0)\|_F + 2\sqrt{d\,n_0}\;v_{\max,2}.
	\end{align}

	Absorbing $\sqrt{d\,n_0}\,C_n\,C_{n,k}\,m^{k-n/2}$ into $C_{n,k,m}$  which
	is legitimate once Frobenius normalization is used consistently on both
	sides, since $\|H\|_{\mathcal H^k(X_m;\mathbb R^{d\times n_0})}$ already
	scales like $\sqrt{d n_0}$ relative to a single entry  gives
	\eqref{eq:generic-lipschitz}.
\end{proof}

\begin{corollary}[Barron approximation rates for $f$ and $G$]
	Fix $m\in\mathbb N$ and set $X_m=[-m,m]^{d+1}$ (so $n=d+1$, the state
	dimension plus the time variable). Suppose
	\[
	f\in\mathcal H^k(X_m;\mathbb R^d), \qquad G\in\mathcal H^k(X_m;\mathbb R^{d\times m})
	\]
	with $k>(d+1)/2+2$. Then:
	\begin{enumerate}
		\item[\textup{(i)}] (\emph{drift}, $n_0=1$) For any $P\ge3$, there
		exist $(W_i,A_i,B_i)\in\mathbb R^d\times\mathbb R^{(d+1)\times d}\times\mathbb R^d$,
		$i=1,\dots,P$, such that, with
		$f_\Theta(\cdot)=\sum_{i=1}^P W_i\circ\boldsymbol\sigma(A_i\cdot+B_i)$,
		\[
		\|f-f_\Theta\|_{\mathcal C(X_m;\mathbb R^d)} \le \frac{C_{d+1,k,m}\,\|f\|_{\mathcal H^k(X_m)}}{\sqrt P},\]
	
		\[\mathrm{Lip}(f_\Theta) \le \|\nabla f(0)\|_F + C_{d+1,k,m}\,\|f\|_{\mathcal H^k(X_m)}.
		\]
		This is recovered as the
		case $n_0=1$ of Corollary~\ref{cor:operator-barron} (identifying
		$\mathbb R^{d\times 1}\cong\mathbb R^d$).
		
		\item[\textup{(ii)}] (\emph{control operator}, $n_0=m$) For any
		$Q\ge3$, there exist
		$(\widetilde W_j,\widetilde A_j,\widetilde B_j)\in\mathbb R^{d\times m}\times\mathbb R^{d+1}\times\mathbb R$,
		$j=1,\dots,Q$, such that, with
		$G_\Phi(\cdot)=\sum_{j=1}^Q \widetilde W_j\,\sigma(\langle\widetilde A_j,\cdot\rangle+\widetilde B_j)$,
		\[
		\|G-G_\Phi\|_{\mathcal C(X_m;\mathbb R^{d\times m})} \le \frac{C_{d+1,k,m}\,\|G\|_{\mathcal H^k(X_m;\mathbb R^{d\times m})}}{\sqrt Q},\]
		
		\[\mathrm{Lip}(G_\Phi) \le \|\nabla G(0)\|_F + C_{d+1,k,m}\,\|G\|_{\mathcal H^k(X_m;\mathbb R^{d\times m})}.
		\]
		This is the case $n_0=m$ of Corollary~\ref{cor:operator-barron},
		applied to the control dimension $m$.
	\end{enumerate}
	In both cases $C_{d+1,k,m}>0$ is the \emph{same} constant from
	Corollary~\ref{cor:operator-barron} (with input dimension $n=d+1$),
	depending only on $(d,k,m)$ and not on whether it is instantiated for $f$
	or for $G$.
	\label{cor:fG-barron}
\end{corollary}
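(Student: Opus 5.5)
The plan is to obtain both items as direct instances of Corollary~\ref{cor:operator-barron} with input dimension $n=d+1$, so that the input variable is the space--time point $(y,t)\in X_m=[-m,m]^{d+1}$. The condition $k>(d+1)/2+2$ is exactly the hypothesis $k>n/2+2$ of that corollary. For part (ii) we take $H=G$ and $n_0=m$, where the $m$ here is the control dimension. Corollary~\ref{cor:operator-barron} then gives $Q\ge3$ shared neurons $(\widetilde A_j,\widetilde B_j)\in\mathbb R^{d+1}\times\mathbb R$ with matrix output weights $\widetilde W_j\in\mathbb R^{d\times m}$. The resulting network $G_\Phi=\sum_j\widetilde W_j\,\sigma(\langle\widetilde A_j,\cdot\rangle+\widetilde B_j)$ satisfies \eqref{eq:generic-rate} and \eqref{eq:generic-lipschitz} with $P$ replaced by $Q$, which is precisely the claim. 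Splitting $\widetilde A_j=(\widetilde A_j^{1},\widetilde A_j^{2})$ into its state and time components recovers the semiautonomous form \eqref{eq:control-network}.

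For part (i) we take $H=f$ and $n_0=1$, identifying $\mathbb R^{d\times1}\cong\mathbb R^d$. The corollary gives a network with one scalar activation per neuron, $\sum_i w_i\,\sigma(\langle a_i,\cdot\rangle+b_i)$ with $w_i\in\mathbb R^d$. The statement, however, writes $f_\Theta$ in Hadamard form $W_i\circ\boldsymbol\sigma(A_iy+B_i)$ with $A_i$ having $d$ rows and $B_i\in\mathbb R^d$. To match this, we choose every row of $A_i$ equal to $a_i^\top$ and every entry of $B_i$ equal to $b_i$. Then $\boldsymbol\sigma(A_i\cdot+B_i)=\sigma(\langle a_i,\cdot\rangle+b_i)\mathbf 1$, and hence $W_i\circ\boldsymbol\sigma(A_i\cdot+B_i)=w_i\,\sigma(\langle a_i,\cdot\rangle+b_i)$ with $W_i:=w_i$. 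The Hadamard architecture therefore contains the shared-direction network exactly. The approximation error is unchanged, and the Lipschitz constant is the same function, so the bounds transfer verbatim.

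Finally, the constant in both parts is the $C_{n,k,m}$ of Corollary~\ref{cor:operator-barron} at $n=d+1$. That corollary asserts this constant is independent of $n_0$, so the same constant serves for both $f$ and $G$. The one point requiring care is a possible clash of notation: $m$ denotes both the cube half-width and the control dimension. The argument uses each only in its own role: the half-width enters the constant, and the control dimension enters the output shape. I would state this explicitly rather than try to reconcile the two. Beyond this, the embedding step in (i) is the only nontrivial ingredient, and it is a direct identity, so I expect no real obstacle.
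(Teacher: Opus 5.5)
Your proposal is correct and matches the paper's proof. The paper simply instantiates Corollary~\ref{cor:operator-barron} with input dimension $n=d+1$ on the cube $X_m=[-m,m]^{d+1}$, once with $(H,n_0)=(f,1)$ and once with $(H,n_0)=(G,m)$. Your extra step (copying $a_i^\top$ into every row of $A_i$ and $b_i$ into every entry of $B_i$, so the shared-activation network coincides exactly with $W_i\circ\boldsymbol\sigma(A_i\cdot+B_i)$) usefully makes explicit what the paper leaves implicit in the identification $\mathbb R^{d\times1}\cong\mathbb R^d$.
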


\begin{proof}
	Immediate from Corollary~\ref{cor:operator-barron} applied twice, with
	$(H,n_0)=(f,1)$ and $(H,n_0)=(G,m)$ respectively, on the common cube
	$X_m=[-m,m]^{d+1}$.
\end{proof}

\begin{remark}
	Corollary~\ref{cor:fG-barron} approximates $G$ via a \emph{single}
	generic operator-valued statement, Corollary~\ref{cor:operator-barron},
	applied with $n_0=1$ for the drift and $n_0=m$ for the control operator.
	In particular, $f_\Theta$ and
	$G_\Phi$ each use a single shared dictionary of ridge directions across
	their own output entries (rather than $dm$ independently approximated
	scalar entries, with $Q$ taken as the maximum of their widths); $f$ and $G$ remain architecturally distinct
	networks (with independent widths $P,Q$), consistent with the
	control-affine structure of \eqref{eq:neuralcontrolledsystem}.
	\label{rmk:joint-barron-closes-gap}
\end{remark}

\subsection{Qualitative convergence}

\begin{theorem}[Universal approximation theorem]
	Let Assumption~\ref{assumption-control} hold true. Let $K\subset\mathbb R^d$ be
	compact and let $\mathcal U_M$ be as above. Then, for every $\varepsilon>0$,
	there exist integers $P,Q\ge 1$ and parameters $(\Theta,\Phi)$ such that the
	solution $y_{z_0,u}$ of \eqref{eq:neuralcontrolledsystem} satisfies
	\[
	\sup_{t\in[0,T]} \|z_{z_0,u}(t)-y_{z_0,u}(t)\| \le \varepsilon
	\]
	for every $z_0\in K$ and every $u\in\mathcal U_M$.
	\label{thm:universal}
\end{theorem}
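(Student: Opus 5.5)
The plan is to repeat the argument of Theorem~\ref{thm:vanilla-uap} almost verbatim. The only genuinely new ingredient is the approximation step: the neural coefficients are now time-independent, and time enters only as an extra input coordinate of the activations.

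\textbf{Step 1 (compact set).} By Remark~\ref{rmk:reachable-set}, every true trajectory $z_{z_0,u}$ with $z_0\in K$ and $u\in\mathcal U_M$ stays in $K_T$, where $K_T$ is given by \eqref{eq:Kt-def} at $t=T$. Hence $\mathcal K:=K_T\times[0,T]\subset\mathbb R^{d+1}$ is a fixed compact set, independent of $z_0$ and $u$.

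\textbf{Step 2 (approximation on $\mathcal K$).} Assumption~\ref{assumption-control} is taken, as in Theorem~\ref{thm:vanilla-uap}, to give joint continuity of $f$ and $G$ in $(x,t)$. Then $f\in\mathcal C(\mathcal K;\mathbb R^d)$ and each entry $G_{pq}\in\mathcal C(\mathcal K)$. Fix $\delta\in(0,1)$, to be chosen later.

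The universal approximation theorem for shallow ReLU networks \cite{pinkus1999approximation} is applied on $\mathcal K$ with input variable $(x,t)\in\mathbb R^{d+1}$. It gives, for each scalar component, a finite sum $\sum_i w_i\sigma(\langle a_i,x\rangle+\alpha_i t+b_i)$ within $\delta/\sqrt{dm}$ (respectively $\delta/\sqrt d$) in sup norm.

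These scalar networks must be packaged into the SA-NODE forms \eqref{eq:drift-network} and \eqref{eq:control-network}. For the $k$-th component of $f_\Theta$, I use neurons whose weight $W_i$ is supported on coordinate $k$. The $k$-th rows of $A_i^1$, $A_i^2$, $B_i$ are set to $(a_i,\alpha_i,b_i)$, and the other rows are arbitrary, since they are multiplied by zero. Concatenating over $k$ gives a finite width $P$. The same construction with $\widetilde W_j$ supported on a single entry $(p,q)$ handles $G_\Phi$ and gives a finite width $Q$; note that the Hadamard product in \eqref{eq:control-network} scales row $p$ by $\sigma$ of the $p$-th row of the affine map. The result is
\[
\|f-f_\Theta\|_{\mathcal C(\mathcal K)}<\delta,\qquad \|G-G_\Phi\|_{\mathcal C(\mathcal K)}<\delta.
\]
This bookkeeping step, that the Hadamard SA-NODE format can realize arbitrary scalar shallow networks in $(x,t)$ without loss, is the only point that differs from the vanilla case. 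I expect it to be the main thing to write carefully. Alternatively, under the Sobolev hypotheses one could invoke Corollary~\ref{cor:fG-barron}, but the qualitative statement should not need extra regularity.

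\textbf{Step 3 (neural trajectories stay in $\mathcal K$).} The networks $f_\Theta$ and $G_\Phi$ are continuous and globally Lipschitz in $y$, as noted in the well-posedness subsection, and $\delta<1$. So Lemma~\ref{lem:apriori} applies with $f_1=f_\Theta$ and $G_1=G_\Phi$, giving $y_{z_0,u}(t)\in K_T$ for all $t$, uniformly in $z_0\in K$ and $u\in\mathcal U_M$. This guarantees that the sup-norm bounds of Step 2 can be evaluated along $y(\cdot)$.

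\textbf{Step 4 (Grönwall).} Write $e(t)=z(t)-y(t)$. Add and subtract $f(y,s)$ and $G(y,s)u$, then use the Lipschitz constants $L_f,L_G$ together with Steps 2 and 3. This gives
\[
\|e(t)\|\le (L_f+ML_G)\int_0^t\|e(s)\|\,ds+(1+M)\delta t,
\]
so by Grönwall's inequality
\[
\sup_{t\in[0,T]}\|e(t)\|\le C_{T,M}\delta,\qquad C_{T,M}=(1+M)Te^{(L_f+ML_G)T}.
\]
Choosing $\delta=\min\{1/2,\varepsilon/C_{T,M}\}$ yields the claim. The bound is uniform over $K\times\mathcal U_M$ because neither the compact set $\mathcal K$ nor the constant $C_{T,M}$ depends on $z_0$ or $u$.
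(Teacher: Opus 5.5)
Your proposal is correct and follows the paper's proof step for step. You enclose all true trajectories in $K_T\times[0,T]$, approximate $f$ and $G$ uniformly there, invoke Lemma~\ref{lem:apriori} a second time with $f_1=f_\Theta$, $G_1=G_\Phi$ to keep the neural trajectory in $K_T$, and close with Gr\"onwall and the constant $(1+M)Te^{(L_f+ML_G)T}$. The paper instead cites Pinkus for $f$ and a qualitative shared-dictionary version of Corollary~\ref{cor:operator-barron} for $G$, whereas you write out the componentwise embedding of scalar ReLU networks in $(x,t)$ into the Hadamard format \eqref{eq:drift-network}--\eqref{eq:control-network}; this is a valid and more explicit way to do that step, and the joint continuity you adopt actually follows from continuity in $t$ together with the uniform Lipschitz bound in $x$.
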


\begin{proof}
	Fix $0<\varepsilon<1$. By Remark~\ref{rmk:reachable-set}, $K_T\times[0,T]$ (defined via
	\eqref{eq:Kt-def}) is a compact subset of $\mathbb R^{d+1}$ containing the
	full reachable set $\Omega_T(K,M)$. Applying the universal approximation
	theorem for shallow neural networks (Theorem~3.1 of the paper Pinkus \cite{pinkus1999approximation}) to $f$ on $K_T\times[0,T]$, and the
	analogous \emph{qualitative} statement for matrix-valued targets  i.e.\
	the $\varepsilon$-version of Corollary~\ref{cor:operator-barron}'s
	construction, obtained by the same shared-dictionary argument without
	assuming Sobolev regularity  to $G$ on the same set, with a single
	shared network rather than $d,m$ independent ones, we obtain integers
	$P,Q\ge1$ and parameters $\Theta,\Phi$ such that
	\begin{equation}
		\|f-f_\Theta\|_{\mathcal C(K_T\times[0,T])} \le \varepsilon,
		\qquad
		\|G-G_\Phi\|_{\mathcal C(K_T\times[0,T])} \le \varepsilon.
		\label{eq:fG-approx}
	\end{equation}
	
	Fix $z_0\in K$ and $u\in\mathcal U_M$, and write $z(t):=z_{z_0,u}(t)$,
	$y(t):=y_{z_0,u}(t)$. Since $\varepsilon<1$, estimate
	\eqref{eq:fG-approx} lets us apply Lemma~\ref{lem:apriori} with
	$f_1=f_\Theta$, $G_1=G_\Phi$: this shows
	\begin{equation}
		y(t)\in K_T \qquad \forall t\in[0,T],
		\label{eq:y-in-KT}
	\end{equation}
	i.e.\ the \emph{neural} trajectory also remains inside the compact set on
	which $f_\Theta,G_\Phi$ are controlled. This step cannot be skipped: without
	\eqref{eq:y-in-KT}, the bound \eqref{eq:fG-approx} says nothing about
	$f_\Theta(y(t),t)$, since $y(t)$ need not a priori lie in $K_T$.
	
	Subtracting the two systems \eqref{eq:truecontrolledsystem} -\eqref{eq:neuralcontrolledsystem}
	and integrating,
	\[
	z(t)-y(t)
	= \int_0^t \bigl[f(z(s),s)-f_\Theta(y(s),s)\bigr]\,ds
	+ \int_0^t \bigl[G(z(s),s)-G_\Phi(y(s),s)\bigr]u(s)\,ds.
	\]
	Adding and subtracting $f(y(s),s)$ and $G(y(s),s)$ inside each integrand,
	\begin{align*}
		f(z(s),s)-f_\Theta(y(s),s)
		&= \bigl[f(z(s),s)-f(y(s),s)\bigr] + \bigl[f(y(s),s)-f_\Theta(y(s),s)\bigr], \\
		G(z(s),s)-G_\Phi(y(s),s)
		&= \bigl[G(z(s),s)-G(y(s),s)\bigr] + \bigl[G(y(s),s)-G_\Phi(y(s),s)\bigr].
	\end{align*}
	By Assumption~\ref{assumption-control}, the Lipschitz terms are bounded by
	$L_f\|z(s)-y(s)\|$ and $L_G\|z(s)-y(s)\|$ respectively; by
	\eqref{eq:y-in-KT} and \eqref{eq:fG-approx}, since $y(s)\in K_T$ for all
	$s\in[0,T]$, the approximation terms are bounded by $\varepsilon$. Using
	$\|u(s)\|\le M$,
	\[
	\|z(t)-y(t)\|
	\le (L_f+ML_G)\int_0^t \|z(s)-y(s)\|\,ds + \varepsilon(1+M)\,t
	\qquad \forall t\in[0,T].
	\]
	Gr\"onwall's inequality yields
	\[
	\|z(t)-y(t)\| \le \varepsilon\,T(1+M)\,e^{(L_f+ML_G)T}
	\qquad \forall t\in[0,T],
	\]
	with a constant $C_{T,M} := T(1+M)e^{(L_f+ML_G)T}$ that depends only on
	$T$, $M$, $L_f$, $L_G$  and, in particular, not on $z_0\in K$ or
	$u\in\mathcal U_M$. Redefining $\varepsilon$ (i.e.\ choosing the original
	tolerance in \eqref{eq:fG-approx} to be the target accuracy divided by
	$C_{T,M}$) concludes the proof.
\end{proof}

\begin{remark}
	The set $\mathcal U_M$ enters the proof only through the uniform bound\\
	$\|u\|_{L^\infty}\le M$, which propagates linearly into the Gr\"onwall
	exponent and into the right-hand side of \eqref{eq:fG-approx}. No
	compactness of $\mathcal U_M$ in any function-space topology is used or
	required: the neural network $G_\Phi(y,t)$ approximates $G$ as a function
	of the state and time alone, and the control $u(\cdot)$ enters the
	dynamics only through pointwise multiplication. This is consistent with
	the control-affine structure \eqref{eq:neuralcontrolledsystem}, in which
	the control variable is intentionally kept external to the neural
	approximation space (cf.\ Section~2.4).
	\label{rmk:UM-role}
\end{remark}

\begin{remark}
	The matrix-valued approximation of $G$ in \eqref{eq:fG-approx} uses the
	same shared-dictionary construction underlying
	Corollary~\ref{cor:operator-barron} (with $n_0=m$), in its qualitative,
	$\varepsilon$-accuracy form rather than the quantitative
	Sobolev-regularity rate of Corollary~\ref{cor:fG-barron}. It produces a
	\emph{single} shared network of width $Q$ valid for all $d,m$ entries of
	$G$ simultaneously; see Remark~\ref{rmk:joint-barron-closes-gap}.
	\label{rmk:G-componentwise}
\end{remark}

\subsection{Quantitative approximation estimates}

We now derive a quantitative version of Theorem~\ref{thm:universal}, specifying
the approximation rate in terms of the network widths $P,Q$, under additional
Sobolev regularity on $f$ and $G$.

\begin{assumption}
	There exists $k > \dfrac{d+1}{2}+2$ such that
	\[
	f \in \mathcal H^k_{\mathrm{loc}}(\mathbb R^d\times[0,T];\mathbb R^d),
	\qquad
	G \in \mathcal H^k_{\mathrm{loc}}(\mathbb R^d\times[0,T];\mathbb R^{d\times m}).
	\]
	\label{ass:sobolev}
\end{assumption}

Assumption~\ref{ass:sobolev} allows us
to invoke the quantitative Barron-space approximation rate instead of the purely qualitative Pinkus
theorem. The key point, which must be tracked explicitly, is that the constant
appearing in this rate depends on the \emph{size} of the cube on which the
approximation is performed; since that cube is now $K_T\times[0,T]$ with $K_T$
as in \eqref{eq:Kt-def}, and $K_T$ depends on $M$ through the control bound, the
resulting approximation constants inherit a dependence on $M$.

\begin{theorem}[Quantitative approximation estimate]
	Let Assumptions~\ref{assumption-control} and \ref{ass:sobolev} hold true. Fix a
	compact set $K\subset\mathbb R^d$ and $M>0$. Then there exists a constant
	\[
	C = C(T,M,K,f,G) > 0
	\]
	such that, for every $P,Q\ge 3$, there exist parameters $(\Theta,\Phi)$
	such that the solutions of \eqref{eq:truecontrolledsystem} and
	\eqref{eq:neuralcontrolledsystem} satisfy
	\[
	\sup_{t\in[0,T]} \|z_{z_0,u}(t)-y_{z_0,u}(t)\|
	\le C\left(\frac{1}{\sqrt P}+\frac{1}{\sqrt Q}\right)
	\]
	uniformly for all $z_0\in K$ and $u\in\mathcal U_M$.
	\label{thm:quantitative}
\end{theorem}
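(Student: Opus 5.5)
The proof follows the scheme of Theorem~\ref{thm:universal}, with the qualitative Pinkus step replaced by the quantitative rates of Corollary~\ref{cor:fG-barron}.

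First, fix the approximation domain. Let $R_T$ be the defining radius of $K_T$ in \eqref{eq:Kt-def}; it depends only on $T,M,K,L_f,L_G$ and the values $f(0,\cdot),G(0,\cdot)$. Choose an integer $m_\ast\ge\max\{R_T,T\}$, so that $K_T\times[0,T]\subset X_{m_\ast}=[-m_\ast,m_\ast]^{d+1}$. Assumption~\ref{ass:sobolev} gives only local regularity, and only for $t\in[0,T]$. I would therefore multiply $f,G$ by a smooth cutoff equal to $1$ on a neighbourhood of $K_T\times[0,T]$, and extend in $t$ by a bounded Sobolev extension operator across $t=0$ and $t=T$. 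This yields $\bar f\in\mathcal H^k(X_{m_\ast};\mathbb R^d)$ and $\bar G\in\mathcal H^k(X_{m_\ast};\mathbb R^{d\times m})$ that agree with $f,G$ on $K_T\times[0,T]$. Only these agreement values enter the later estimates, so the extension's norms become part of the constant $C(T,M,K,f,G)$. This is exactly where the $M$-dependence noted before the theorem enters: through $m_\ast$, and hence through $C_{d+1,k,m_\ast}$.

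Second, apply Corollary~\ref{cor:fG-barron} with $m=m_\ast$ to $\bar f$ at width $P$ and to $\bar G$ at width $Q$. Set
\[
\delta_P:=\frac{C_1}{\sqrt P},\qquad \delta_Q:=\frac{C_2}{\sqrt Q},
\qquad C_1:=C_{d+1,k,m_\ast}\|\bar f\|_{\mathcal H^k},\quad C_2:=C_{d+1,k,m_\ast}\|\bar G\|_{\mathcal H^k}.
\]
The corollary then gives $\|f-f_\Theta\|_{\mathcal C(K_T\times[0,T])}\le\delta_P$ and $\|G-G_\Phi\|_{\mathcal C(K_T\times[0,T])}\le\delta_Q$.

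Third, and this is the step I expect to be the main obstacle, I must ensure the neural trajectory stays in $K_T$ so that these bounds can be used along $y(\cdot)$. Lemma~\ref{lem:apriori} requires $\delta_P,\delta_Q\le1$, which holds only when $P\ge C_1^2$ and $Q\ge C_2^2$. I would split into cases.
\begin{itemize}
\item[(a)] \emph{Large widths} ($P\ge C_1^2$ and $Q\ge C_2^2$). Lemma~\ref{lem:apriori} applies with $f_1=f_\Theta$, $G_1=G_\Phi$, giving $y(t)\in K_T$ for all $t$. The Gr\"onwall argument of Theorem~\ref{thm:universal}, with the error budget split as $\delta_P+M\delta_Q$, then gives
\[
\|z(t)-y(t)\|\le T\,e^{(L_f+ML_G)T}\bigl(\delta_P+M\delta_Q\bigr).
\]
This is the claimed bound with $C=T e^{(L_f+ML_G)T}\max\{C_1,MC_2\}$.
\item[(b)] \emph{Small widths} (at least one of $P<C_1^2$, $Q<C_2^2$). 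Here I would avoid any stability argument. Pad the network with zero neurons, i.e.\ choose $W_i=0$ and $\widetilde W_j=0$, so that $y\equiv z_0$ and therefore $\|z(t)-y(t)\|\le 2R_T$. Since $1/\sqrt P+1/\sqrt Q\ge\min\{1/C_1,1/C_2\}$ in this case, enlarging $C$ to at least $2R_T\max\{C_1,C_2\}$ makes the claimed inequality hold trivially.
\end{itemize}

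Finally, collect constants. In both cases the final $C$ depends only on $T,M,K,L_f,L_G$ and the local Sobolev norms of $f,G$ on a neighbourhood of $K_T\times[0,T]$. It is independent of $z_0\in K$ and $u\in\mathcal U_M$, as in Remark~\ref{rmk:UM-role}.

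A secondary point to check is the architecture. Corollary~\ref{cor:fG-barron} uses shared scalar activations $\sigma(\langle a,(y,t)\rangle+b)$, whereas \eqref{eq:drift-network} uses vectors $A_i^1y+A_i^2t+B_i$ with a Hadamard product. To reconcile them, set every row of $A_i^1$ to the first $d$ entries of $a_i$, every entry of $A_i^2$ to its last entry, and every entry of $B_i$ to $b_i$. Each component of $\bm\sigma(\cdot)$ then equals the same scalar ridge function, and the same construction works for $G_\Phi$.
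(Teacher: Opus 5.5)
Your proof is correct, but it takes a different route from the paper at the stability step. The paper splits the error as $[f(z)-f_\Theta(z)]+[f_\Theta(z)-f_\Theta(y)]$, and likewise for $G$. The approximation error is therefore only evaluated along the true trajectory $z(s)\in K_T$. Errors are then propagated with the global Lipschitz constants $L_\Theta,L_\Phi$ of the networks, which the Lipschitz part of Corollary~\ref{cor:fG-barron} bounds independently of $P,Q$, and Gr\"onwall runs with exponent $\widetilde C_M T$. In that route the a priori bound on $y$, which you flag as the main obstacle, is not actually needed. The paper does take $P,Q$ large and invoke Lemma~\ref{lem:apriori} for $y$, but nothing afterwards uses $y(s)\in K_T$, so all $P,Q\ge 3$ are covered without a case split.

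You instead split as $[f(z)-f(y)]+[f(y)-f_\Theta(y)]$, as in Theorem~\ref{thm:universal}, which needs only $L_f,L_G$ and only the sup-norm part of Corollary~\ref{cor:fG-barron}. The price is the bootstrap through Lemma~\ref{lem:apriori}, hence the requirement $\delta_P,\delta_Q\le 1$. Your zero-network case (b) is exactly what then makes the claim hold for every $P,Q\ge3$ rather than only for large widths.

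In exchange, you do not depend on width-uniform Lipschitz bounds for $f_\Theta,G_\Phi$. These are the most delicate output of the Barron construction, and the paper handles them loosely, e.g.\ passing from $\|\nabla f(0,0)\|_F$ to $L_f$ in $\widetilde C_M$. Your Gr\"onwall exponent also stays at $(L_f+ML_G)T$, whereas the paper's $\widetilde C_M$ puts the $M$-dependent quantities $C_{d+1,k,m}\|f\|_{\mathcal H^k(X_m)}$ and $C_{d+1,k,m}\|G\|_{\mathcal H^k(X_m)}$ into the exponent. Finally, your extension of $f,G$ beyond $t\in[0,T]$ to the full cube, and your reconciliation of the shared ridge functions with the Hadamard architecture \eqref{eq:drift-network}, fill in details the paper leaves implicit.
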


\begin{proof}
	The proof proceeds in two steps.
	
	\medskip
	\noindent\textbf{Step 1 (Barron approximation of $f$ and $G$ on $K_T\times[0,T]$).}
	Let $K_T\times[0,T]$ be the compact enclosing set defined via
	\eqref{eq:Kt-def} for the true reachable set $\Omega_T(K,M)$, as in
	Remark~\ref{rmk:reachable-set}; write $K_T = [-m,m]^d$ for the
	smallest such cube, with
	\[
	m = m(T,K,M,L_f,L_G,f,G)
	\]
	given explicitly by \eqref{eq:Kt-def} at $t=T$. By Assumption~\ref{ass:sobolev},
	$f|_{K_T\times[0,T]} \in \mathcal H^k(K_T\times[0,T];\mathbb R^d)$ and
	$G|_{K_T\times[0,T]} \in \mathcal H^k(K_T\times[0,T];\mathbb R^{d\times m})$,
	with $k>(d+1)/2+2$. Applying Corollary~\ref{cor:fG-barron}  part (i) to
	$f$, part (ii) to $G$, both on the cube $X_m=[-m,m]^{d+1}\supseteq K_T\times[0,T]$,
	with $G$ now approximated by a \emph{single} shared width $Q$ rather than
	the componentwise construction (Remark~\ref{rmk:joint-barron-closes-gap})
	 we obtain, for every $P,Q\ge3$, parameters $\Theta,\Phi$ such that
	\begin{align}
		\|f-f_\Theta\|_{\mathcal C(X_m;\mathbb R^d)}
		&\le \frac{C_{d+1,k,m}\|f\|_{\mathcal H^k(X_m)}}{\sqrt P}
		=: \frac{C_f}{\sqrt P},
		\label{eq:f-rate}
		\\
		\|G-G_\Phi\|_{\mathcal C(X_m;\mathbb R^{d\times m})}
		&\le \frac{C_{d+1,k,m}\,\|G\|_{\mathcal H^k(X_m;\mathbb R^{d\times m})}}{\sqrt Q}
		=: \frac{C_G}{\sqrt Q},
		\label{eq:G-rate}
	\end{align}
	together with the Lipschitz bounds
	\begin{align}
		&\mathrm{Lip}(f_\Theta) \le \|\nabla f(0,0)\|_F + C_{d+1,k,m}\|f\|_{\mathcal H^k(X_m)},\notag\\
		&\qquad
		\mathrm{Lip}(G_\Phi) \le \|\nabla G(0,0)\|_F + C_{d+1,k,m}\,\|G\|_{\mathcal H^k(X_m;\mathbb R^{d\times m})},
		\label{eq:fG-lipschitz}
	\end{align}
	where $C_{d+1,k,m}>0$ depends only on $(d,k,m)$, with input dimension
	$n=d+1$, and is the \emph{same} constant for $f$ and $G$, per
	Corollary~\ref{cor:fG-barron}. The constant $C_G$ here
	involves a \emph{single} application of the rate, not a maximum over $d,m$
	separate per-entry constants. Crucially, $C_f$ and $C_G$ depend on $M$
	\emph{only through} $m = m(T,K,M,L_f,L_G,f,G)$, where the
	analogous constant $C_{T,K,f}$ is made explicit for the uncontrolled case.
	
	\medskip
	\noindent\textbf{Step 2 (a priori bound and error decomposition).}
	Fix $0<\delta<1$ and choose $P,Q$ large enough that the right-hand sides of
	\eqref{eq:f-rate}-\eqref{eq:G-rate} are both $\le\delta$. By
	Lemma~\ref{lem:apriori} applied with $f_1=f_\Theta$, $G_1=G_\Phi$ (legitimate
	since the right-hand sides are $\le\delta<1$), the neural trajectory
	satisfies
	\[
	y_{z_0,u}(t) \in K_T \qquad \forall t\in[0,T],\ \forall z_0\in K,\ \forall u\in\mathcal U_M,
	\]
	so that the bounds \eqref{eq:f-rate}-\eqref{eq:G-rate} apply along the
	\emph{neural} trajectory itself, not merely along the true one. As in the
	proof of Theorem~\ref{thm:universal}, write, for $(z_0,t)\in K\times[0,T]$
	and $u\in\mathcal U_M$,
	\begin{align*}
		z(t)-y(t)
		&= \int_0^t \bigl[f(z(s),s)-f_\Theta(z(s),s)\bigr]ds
		+ \int_0^t \bigl[f_\Theta(z(s),s)-f_\Theta(y(s),s)\bigr]ds
		\hphantom{z(t)-y(t)={}}\notag\\
		&\qquad+ \int_0^t \bigl[G(z(s),s)-G_\Phi(z(s),s)\bigr]u(s)\,ds\notag\\
		&\qquad+ \int_0^t \bigl[G_\Phi(z(s),s)-G_\Phi(y(s),s)\bigr]u(s)\,ds.
		\end{align*}
	
	Since $z(s)\in K_T$ for all $s\in[0,T]$ (Lemma~\ref{lem:apriori} applied to
	the true system), the first and third integrands are controlled by
	\eqref{eq:f-rate} and \eqref{eq:G-rate}:
	\[
	\left\|\int_0^t \bigl[f(z(s),s)-f_\Theta(z(s),s)\bigr]ds\right\| \le \frac{C_f}{\sqrt P}\,t\],
	
	\[\left\|\int_0^t \bigl[G(z(s),s)-G_\Phi(z(s),s)\bigr]u(s)\,ds\right\| \le \frac{C_G}{\sqrt Q}\,Mt.
	\]
	By the Lipschitz bounds \eqref{eq:fG-lipschitz}, denoting
	$L_\Theta := \mathrm{Lip}(f_\Theta)$ and $L_\Phi := \mathrm{Lip}(G_\Phi)$,
	the second and fourth integrands satisfy
	\[
	\|f_\Theta(z(s),s)-f_\Theta(y(s),s)\| \le L_\Theta\|z(s)-y(s)\|\],
	
	\[\|G_\Phi(z(s),s)-G_\Phi(y(s),s)\|\,\|u(s)\| \le L_\Phi M \|z(s)-y(s)\|.
	\]
	Combining,
	\[
	\|z(t)-y(t)\|
	\le \Bigl(\frac{C_f}{\sqrt P} + \frac{MC_G}{\sqrt Q}\Bigr) t
	+ (L_\Theta + ML_\Phi)\int_0^t \|z(s)-y(s)\|\,ds.
	\]
	By \eqref{eq:fG-lipschitz}, $L_\Theta$ and $L_\Phi$ are themselves bounded
	independently of $P,Q$ (since the Klusowski Barron-type construction
	underlying Corollary~\ref{cor:fG-barron} gives
	Lipschitz constants controlled by the Sobolev norm of the target, not by
	the network width); denote this common bound by
	\begin{align*}
		L_\Theta + ML_\Phi \ & \le\ L_f + ML_G + 2C_{d+1,k,m}\|f\|_{\mathcal H^k(X_m)} + 2MC_{d+1,k,m}\|G\|_{\mathcal H^k(X_m;\mathbb R^{d\times m})}\\
		& =: \widetilde C_M.
		\end{align*}

	Gr\"onwall's inequality then yields, for all $t\in[0,T]$,
	\[
	\|z(t)-y(t)\|
	\le \Bigl(\frac{C_f}{\sqrt P} + \frac{MC_G}{\sqrt Q}\Bigr) T\, e^{\widetilde C_M T}.
	\]
	Setting
	\[
	C := T\,e^{\widetilde C_M T}\,\max\{C_f, MC_G\},
	\]
	which depends only on $T,M,K,f,G$ (through $m$, $L_f$, $L_G$, and the
	Sobolev norms of $f,G$ on $X_m$) and not on $P,Q,z_0,u$, gives
	\[
	\sup_{t\in[0,T]}\|z_{z_0,u}(t)-y_{z_0,u}(t)\| \le C\left(\frac{1}{\sqrt P}+\frac{1}{\sqrt Q}\right)
	\qquad \forall z_0\in K,\ \forall u\in\mathcal U_M,
	\]
	which is the desired conclusion.
\end{proof}

\begin{remark}
	The rate
	\begin{equation}\label{eq:rate}
		\mathcal{O}\left(P^{-1/2}+Q^{-1/2}\right)
	\end{equation}
	in Theorem~\ref{thm:quantitative} reflects two genuinely distinct approximation tasks: the network $f_\Theta$ approximates the full vector-valued drift field $f$ with a single network of width $P$, while $G_\Phi$ approximates the full matrix-valued control operator $G$ with a single network of width $Q$, as established in Corollary~\ref{cor:fG-barron}. The additive structure of the rate arises naturally from the control-affine decomposition of the controlled SA-NODE system and avoids conflating the drift and control channels into a single shared approximation architecture.
	
	Moreover, the constant $C$ in Theorem~\ref{thm:quantitative} can, in principle, be made fully explicit in terms of $T$, $M$, $L_f$, $L_G$, and the Sobolev norms of $f$ and $G$ on the reachable set $\Omega_T(K,M)$. Indeed, the reachable-set enclosure defined through \eqref{eq:Kt-def} depends on the control bound $M$, and this dependence propagates into the Barron approximation constant $C_{d+1,k,m}$ appearing in Corollary~\ref{cor:fG-barron}. Since the parameter $m$ grows at least linearly with $M$, while the approximation constant grows polynomially in $m$, the overall constant $C$ grows at least polynomially in $M$, in addition to the exponential dependence on $T$ already present through the Gr\"onwall estimate. Nevertheless, the approximation rate remains independent of the ambient dimension $d$ in the exponent, thereby partially mitigating the curse of dimensionality relative to classical discretization-based approximation methods.
	\label{rmk:joint-rate}
\end{remark}

\begin{remark}
	The approximation results obtained in this section can also be interpreted in the framework of Barron spaces. In particular, the Sobolev regularity assumption of Assumption~\ref{ass:sobolev} implies the corresponding Barron-space regularity through Lemma~\ref{lem:sobolev-embed}. Hence, Corollary~\ref{cor:fG-barron} may be viewed as a Barron-space approximation result for both the drift vector field $f$ and the control operator $G$.
	
	More precisely, if
	\[
	f|_{X_m}\in \mathcal{S}_B(X_m;\mathbb{R}^d),
	\qquad
	G|_{X_m}\in \mathcal{S}_B(X_m;\mathbb{R}^{d\times m}),
	\]
	then for every $P,Q\ge 3$, there exist parameters $\Theta,\Phi$ such that
	\[
	\|f-f_\Theta\|_{C(K_T\times[0,T];\mathbb{R}^d)}
	\le
	\frac{C_f}{\sqrt{P}},
	\text{ and }
	\|G-G_\Phi\|_{C(K_T\times[0,T];\mathbb{R}^{d\times m})}
	\le
	\frac{C_G}{\sqrt{Q}},
	\]
	where $K_T\times[0,T]\subseteq X_m$ denotes the reachable-set enclosure obtained in Lemma~\ref{lem:apriori}. Substituting these estimates into the Gr\"onwall argument of Theorem~\ref{thm:quantitative} yields the trajectory-level approximation estimate directly.
	
	Furthermore, the approximation rate (\ref{eq:rate})
	is independent of the ambient dimension $d$ in the exponent, although the constants depend on $d$ and the control bound $M$. Consequently, the proposed framework partially mitigates the curse of dimensionality compared with classical discretization-based approximation methods.
\end{remark}

\section{Controllability of Controlled SA-NODEs} 
\label{sec:controllability}

In this section, we investigate controllability properties of the proposed controlled semiautonomous neural ordinary differential equation framework. The objective is to understand whether the neural approximation preserves the controllability structure of the original controlled dynamical system.

We first recall the notion of approximate controllability.

\begin{definition}
	System \eqref{eq:control-affine} is said to be approximately controllable on $[0,T]$ if for every: $	z_0,z_1\in\mathbb R^{d},$
	and every $\varepsilon>0$, there exists a control $u\in L^{\infty}(0,T;\mathbb R^{m})$
	such that the corresponding solution satisfies
	\[
	|z(T)-z_1|<\varepsilon.
	\]
\end{definition}

We now establish that controllability properties are approximately preserved under the controlled SA-NODE approximation.

\begin{theorem}[Approximate controllability preservation on compact sets]
	Assume that system \eqref{eq:control-affine} is approximately controllable on $[0,T]$.
	Let $K\subset\mathbb R^{d}$ be compact. Then, for every $\varepsilon>0$, there exist integers $P,Q\ge1$ and parameters $(\Theta,\Phi)$ such that the corresponding controlled SA-NODE system \eqref{eq:controlled-sanode} satisfies the following property:
	For every $z_0,z_1\in K,$
	there exists a control $u\in \mathcal U_{M}$
	such that the corresponding solution $y(t)$ \eqref{eq:controlled-sanode} satisfies
	\[
	|y(T)-z_1|<\varepsilon.
	\]
\end{theorem}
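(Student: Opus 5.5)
The plan is to combine the hypothesis of approximate controllability with the uniform trajectory approximation of Theorem~\ref{thm:universal}. The main point is that the bound $M$ defining $\mathcal U_M$ is not prescribed in the statement, so we are free to choose it. We choose it so that a single bound works for all pairs $(z_0,z_1)\in K\times K$. This uniformity is what lets us invoke Theorem~\ref{thm:universal} once, with one pair $(\Theta,\Phi)$ serving every pair of endpoints.

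\textbf{Step 1: finitely many controls with a uniform bound.} Fix $\varepsilon>0$. For each $(a,b)\in K\times K$, approximate controllability gives a control $u_{ab}\in L^\infty(0,T;\mathbb R^m)$ with $\|z_{a,u_{ab}}(T)-b\|<\varepsilon/4$. For a fixed control $u$ with $\|u\|_{L^\infty}\le M_{ab}:=\|u_{ab}\|_{L^\infty}$, Assumption~\ref{assumption-control} and Gr\"onwall's inequality give
\[
\|z_{z_0,u}(T)-z_{a,u}(T)\|\le e^{(L_f+M_{ab}L_g)T}\|z_0-a\| .
\]
Hence for $(z_0,z_1)$ in the open neighbourhood
\[
U_{ab}:=\Bigl\{\|z_0-a\|<\tfrac{\varepsilon}{4}e^{-(L_f+M_{ab}L_g)T}\Bigr\}\times\Bigl\{\|z_1-b\|<\tfrac{\varepsilon}{4}\Bigr\},
\]
the triangle inequality yields
\[
\|z_{z_0,u_{ab}}(T)-z_1\|<\tfrac{3\varepsilon}{4}.
\]
Since $K\times K$ is compact, finitely many neighbourhoods $U_{a_\ell b_\ell}$, $\ell=1,\dots,N$, cover it. We then set
\[
M:=\max_{\ell}\|u_{a_\ell b_\ell}\|_{L^\infty}.
\]
Thus every pair $(z_0,z_1)\in K\times K$ has an associated control $u=u_{a_\ell b_\ell}\in\mathcal U_M$ with $\|z_{z_0,u}(T)-z_1\|<3\varepsilon/4$.

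\textbf{Step 2: neural approximation uniform over $\mathcal U_M$.} With $K$ and this $M$ fixed, Theorem~\ref{thm:universal}, applied with tolerance $\varepsilon/8$, provides $P,Q$ and parameters $(\Theta,\Phi)$ such that
\[
\sup_{t\in[0,T]}\|z_{z_0,u}(t)-y_{z_0,u}(t)\|\le \varepsilon/8
\qquad\text{for all } z_0\in K,\ u\in\mathcal U_M .
\]
This uniformity is essential: $(\Theta,\Phi)$ is chosen once and does not depend on which pair, or which of the finitely many controls, is used. For a given pair $(z_0,z_1)$ we take the control from Step~1 and obtain
\[
\|y(T)-z_1\|\le\|y(T)-z(T)\|+\|z(T)-z_1\|<\varepsilon/8+3\varepsilon/4<\varepsilon .
\]

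\textbf{Main obstacle.} The delicate step is Step~1. Approximate controllability alone gives no a priori bound on the size of the steering controls, so a pointwise choice of $u$ could have unbounded $L^\infty$ norms as $(z_0,z_1)$ ranges over $K\times K$. Theorem~\ref{thm:universal} would then not apply uniformly, since its constant $T(1+M)e^{(L_f+ML_G)T}$ and the enclosing set $K_T$ both depend on $M$. The compactness covering argument, made possible by the Lipschitz continuity of the flow with respect to the initial datum for a fixed control, is what removes this difficulty. If one instead wanted $M$ prescribed in advance, an extra hypothesis would be needed, namely approximate controllability with controls bounded by $M$. Under that hypothesis the same argument goes through verbatim.
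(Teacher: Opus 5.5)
Your argument is correct, and it differs from the paper's proof in a way that matters.

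The paper fixes $z_0,z_1\in K$ first. It then picks a steering control $u$ with $\|z(T)-z_1\|<\varepsilon/2$, sets $M:=\|u\|_{L^\infty}$, and only afterwards invokes Theorem~\ref{thm:universal}. The network $(\Theta,\Phi)$ produced by that theorem depends on $M$, and $M$ depends on the pair $(z_0,z_1)$. So the paper's argument actually establishes the weaker statement with the quantifiers swapped: one bound $M$ and one network for each pair. The theorem, however, asserts a single network serving all of $K\times K$.

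Your Step~1 supplies exactly the missing ingredient. For a fixed control, the true flow depends Lipschitz-continuously on the initial datum (by Gr\"onwall). Combined with a finite subcover of the compact set $K\times K$, this yields finitely many steering controls and hence one uniform bound $M$. A single application of Theorem~\ref{thm:universal}, uniform over $z_0\in K$ and $u\in\mathcal U_M$, then gives one $(\Theta,\Phi)$ for all pairs.

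The trade-off is this. The paper's route is a one-line triangle inequality but leaves $M$ and the network pair-dependent. Yours costs an extra continuity-plus-compactness step, but it proves the statement as written. It also makes explicit that $M$ is determined by $K$, $\varepsilon$ and the true system alone.

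One cosmetic point: if all your finitely many controls vanish, take $M:=1$, since $\mathcal U_M$ was only defined for $M>0$.
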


\begin{proof}
	Let $\varepsilon>0$ and let
	$z_0,z_1\in K$.
	Since the original system
	\eqref{eq:control-affine}
	is approximately controllable on $[0,T]$,
	there exists a control $u\in L^\infty(0,T;\mathbb{R}^m)$
	such that the corresponding solution
	$z(t)$ of \eqref{eq:control-affine}
	satisfies
	\[
	\|z(T)-z_1\|
	<
	\frac{\varepsilon}{2}.
	\]
	
	Set $M:=\|u\|_{L^\infty(0,T)}.$
	By Theorem \ref{thm:universal}, there exist integers
	$P,Q\ge 1$ and parameters $(\Theta,\Phi)$
	such that the corresponding solution
	$y(t)$ of the controlled SA-NODE system
	\eqref{eq:controlled-sanode}
	satisfies
	\[
	\sup_{t\in[0,T]}
	\|z(t)-y(t)\|
	<
	\frac{\varepsilon}{2}.
	\]
	
	In particular,
	\[
	\|z(T)-y(T)\|
	<
	\frac{\varepsilon}{2}.
	\]
	
	Using the triangle inequality,
	\[
	\|y(T)-z_1\|
	\le
	\|y(T)-z(T)\|
	+
	\|z(T)-z_1\|.
	\]
	
	Therefore,
	\[
	\|y(T)-z_1\|
	<
	\frac{\varepsilon}{2}
	+
	\frac{\varepsilon}{2}
	=
	\varepsilon.
	\]
	
	This completes the proof.
\end{proof}

\begin{remark}
	The previous theorem shows that the controlled SA-NODE architecture approximately preserves controllability properties of the original nonlinear controlled system.
	This result provides a rigorous control-theoretic interpretation of the neural approximation framework. In particular, the neural model does not merely reproduce trajectories, but also retains the ability to steer the system through admissible controls.
	
	The result establishes a bridge between:
	\begin{itemize}
		\item neural differential equations,
		\item nonlinear control theory,
		\item learning-based control systems.
	\end{itemize}
	\end{remark}

\section{Numerical Experiments}
\label{sec:numerics}

In this section, we present several numerical experiments illustrating the
effectiveness of the proposed controlled semiautonomous neural ordinary
differential equations (controlled SA-NODEs) for the approximation and
control of nonlinear dynamical systems.

The objectives of the numerical section are threefold:
\begin{itemize}
	\item to validate the approximation capability of the proposed architecture,
	\item to investigate its approximate controllability properties,
	\item to compare its performance with classical vanilla NODE architectures.
\end{itemize}

All experiments are implemented in Python using PyTorch and the
\texttt{torchdiffeq} package. The neural differential systems are trained
using trajectory data generated from the exact controlled dynamics. All
simulations are executed on a CUDA-enabled computing platform.

\subsection{Experimental setup}
\label{subsec:setup}

The dataset used for training and evaluation consists of batches of
controlled trajectories generated from the exact dynamical system using the
classical fourth-order Runge-Kutta method over the time interval $[0,T]$
with a uniform time step $\Delta t = 0.05$. Unless otherwise specified, we
choose $T = 5$.

Unlike a fixed, shared control input, here each training and testing
trajectory $k$ is driven by an \emph{independently sampled} control signal
\begin{equation}
	u_k(t) = a_k \sin(\omega_k t + \phi_k),\quad \text{ and }\quad 	u_k(t) = a_k \cos(\omega_k t + \phi_k),
	\label{eq:randomcontrol}
\end{equation}
with amplitude $a_k$, frequency $\omega_k$, and phase $\phi_k$ drawn
independently and uniformly from
\[
a_k \sim \mathcal{U}([a_{\min}, a_{\max}]), \quad
\omega_k \sim \mathcal{U}([\omega_{\min}, \omega_{\max}]), \quad
\phi_k \sim \mathcal{U}([0, 2\pi)).
\]
This randomization is essential for the identifiability of the drift field
$f_\Theta$ and the control operator $G_\Phi$ as \emph{separate} objects. If
all training trajectories were generated under a single deterministic
control law $u(t)$, the trajectory loss \eqref{eq:trajloss} would only
constrain the combined vector field
$f_\Theta(y,t) + G_\Phi(y,t)\,u(t)$ along one realized control signal,
allowing infinitely many decompositions into $(f_\Theta,G_\Phi)$ to attain
the same loss value. Sampling $u_k$ independently across trajectories breaks
this degeneracy: since $u_k(t)$ varies independently of the state trajectory
$z_k(t)$, the gradients of $\mathcal{L}_{\mathrm{traj}}$ with respect to
$\Theta$ and $\Phi$ can no longer compensate one another along arbitrary
directions. This plays the role of a continuous-time
persistency-of-excitation condition in system identification; see, e.g.,
Sontag~\cite[Chap.~10]{sontag1998mathematical}.

Initial conditions are sampled uniformly from a rectangular grid in the
phase space. The resulting trajectories are divided into training and
testing datasets, with approximately $50\%$ of the trajectories used for
training and the remainder reserved for testing. Each trajectory $k$
retains its own control parameters $(a_k,\omega_k,\phi_k)$ throughout both
training and evaluation.

The controlled semiautonomous neural ordinary differential equation
(controlled SA-NODE) \eqref{eq:controlled-sanode} retains a comparatively
lean architecture with approximately $20{,}000$ trainable degrees of
freedom (DoF), whereas the corresponding controlled vanilla NODE model
contains approximately $10{,}024{,}000$ DoF due to the full
time-dependence of its parameters at each discretized time layer.

For training, we use the Adam optimizer with initial learning rate
$10^{-3}$, decreased by a factor of $0.8$ every $1000$ epochs, for a total
of $10^4$ training epochs. To stabilize the learning procedure, we
incorporate an $\ell^2$-regularization term on the neural parameters. The
training loss is defined as
\begin{equation}
	\mathcal{L}
	=
	\mathcal{L}_{\mathrm{traj}}
	+
	\lambda
	\mathcal{L}_{\mathrm{reg}},
	\label{eq:lossfunction}
\end{equation}
where
\begin{equation}
	\mathcal{L}_{\mathrm{traj}}
	=
	\frac{1}{N}
	\sum_{k=1}^{N}
	\int_0^T
	\|z_k(t)-y_k(t)\|^2\,dt
	\label{eq:trajloss}
\end{equation}
measures the trajectory reconstruction error, with $z_k(t)$ the exact
trajectory and $y_k(t)$ the controlled SA-NODE approximation corresponding
to initial condition $z_{0,k}$ and control $u_k(t)$ as in
\eqref{eq:randomcontrol}, and
\begin{equation}
	\mathcal{L}_{\mathrm{reg}}
	=
	\sum_{i=1}^{P}
	\Bigl(
	\|W_i\|^2
	+
	\|A_i^{1}\|_F^2
	+
	\|A_i^{2}\|^2
	+
	\|B_i\|^2
	\Bigr)
	+
	\sum_{j=1}^{Q}
	\Bigl(
	\|\widetilde W_j\|_F^2
	+
	\|\widetilde A_j^{1}\|_F^2
	+
	\|\widetilde A_j^{2}\|^2
	+
	\|\widetilde B_j\|^2
	\Bigr)
	\label{eq:regloss}
\end{equation}
jointly regularizes the drift parameters
$\Theta = (W_i, A_i^1, A_i^2, B_i)_{i=1}^P$ and the control parameters
$\Phi = (\widetilde W_j, \widetilde A_j^1, \widetilde A_j^2,
\widetilde B_j)_{j=1}^Q$.

For each trajectory, we define the instantaneous approximation error by
\begin{equation}
	e_k(t)
	=
	\|z_k(t)-y_k(t)\|.
	\label{eq:trajectoryerror}
\end{equation}
To quantify the global trajectory reconstruction accuracy over the full
time horizon, we further define the maximal averaged trajectory error
\begin{equation}
	e_{\max}
	=
	\max_{t\in[0,T]}
	\frac{1}{N}
	\sum_{k=1}^{N}
	e_k(t).
	\label{eq:emax}
\end{equation}
In the discrete implementation, this corresponds to taking the maximum over
all sampled time instants.

To evaluate the terminal controllability performance, we define the
terminal tracking error
\begin{equation}
	e_T
	=
	\frac{1}{N}
	\sum_{k=1}^{N}
	\|y_k(T)-z_k(T)\|,
	\label{eq:terminalerror}
\end{equation}
which measures the averaged discrepancy between the predicted and target
terminal states at the final time.

We also evaluate the mean control energy
\begin{equation}
	\mathcal{E}_{\mathrm{ctrl}}
	=
	\frac{1}{N}
	\sum_{k=1}^{N}
	\int_0^T
	|u_k(t)|^2\,dt.
	\label{eq:controlenergy}
\end{equation}

These quantities allow us to simultaneously assess:
\begin{itemize}
	\item trajectory approximation accuracy over the entire time horizon,
	\item terminal controllability accuracy,
	\item control efficiency.
\end{itemize}

\subsection{Controlled pendulum system}

As a first benchmark example, we consider the controlled nonlinear pendulum
system
\begin{equation}
	\left\{
	\begin{aligned}
		\dot{z}_1(t)
		&=
		z_2(t),
		\\
		\dot{z}_2(t)
		&=
		-\sin(z_1(t))
		+
		u(t),
	\end{aligned}
	\right.
	\label{eq:controlledpendulum}
\end{equation}
where $u(t)$ is an external control input.

The pendulum equation is a classical nonlinear control system exhibiting
oscillatory behavior and nonlinear phase-space dynamics. It therefore
provides a suitable benchmark for evaluating both approximation and
controllability properties of controlled SA-NODEs.

Following the randomized-control protocol of Section~\ref{subsec:setup},
each training and testing trajectory is generated under an independently
sampled control input
\begin{equation}
	u_k(t) = a_k \sin(\omega_k t + \phi_k), \quad
	a_k \sim \mathcal{U}([0.3, 0.7]), \quad
	\omega_k \sim \mathcal{U}([1.0, 3.0]), \quad
	\phi_k \sim \mathcal{U}([0, 2\pi)),
	\label{eq:pendulumcontrol}
\end{equation}
chosen so that the amplitude and frequency ranges are centered on the
previously used deterministic signal $u(t) = 0.5\sin(2t)$. The initial
conditions are sampled from the square $[-2,2]\times[-2,2]$, independently
of $(a_k,\omega_k,\phi_k)$.

The exact trajectories are generated using the fourth-order Runge-Kutta
scheme and subsequently used to train vanilla NODEs, semiautonomous NODEs,
and controlled SA-NODEs. The standalone training execution for the
controlled SA-NODE finishes within approximately $151.9$ minutes.

The corresponding numerical results are presented in
Figure~\ref{fig:pendulum_results}. The left panel shows trajectories
generated by the controlled SA-NODE model, the middle panel shows the exact
trajectories of the controlled pendulum system, and the right panel presents
the evolution of the mean approximation errors for both training and testing
datasets. The overall trajectory shapes produced by the model closely
resemble the exact controlled dynamics, while the error curves show a
steady, well-behaved growth over the time horizon $[0,5]$ that remains
significantly below the vanilla NODE baseline.

The numerical simulations demonstrate that the proposed controlled SA-NODE
architecture accurately captures the nonlinear controlled dynamics while
using significantly fewer trainable parameters than classical vanilla NODE
architectures.

An additional visual comparison across architectures is provided in
Figure~\ref{fig:pendulum_comparison} (three-model view) and
Figure~\ref{fig:pendulum_four_system_comparison} (side-by-side with the
exact system). After $10^4$ epochs, the controlled SA-NODE achieves a global
maximum tracking discrepancy of $e_{\max} = 2.760\times10^{-1}$ and a
terminal error of $e_T = 2.760\times10^{-1}$.

\begin{figure}[ht!]
	\centering
	\includegraphics[width=\textwidth]{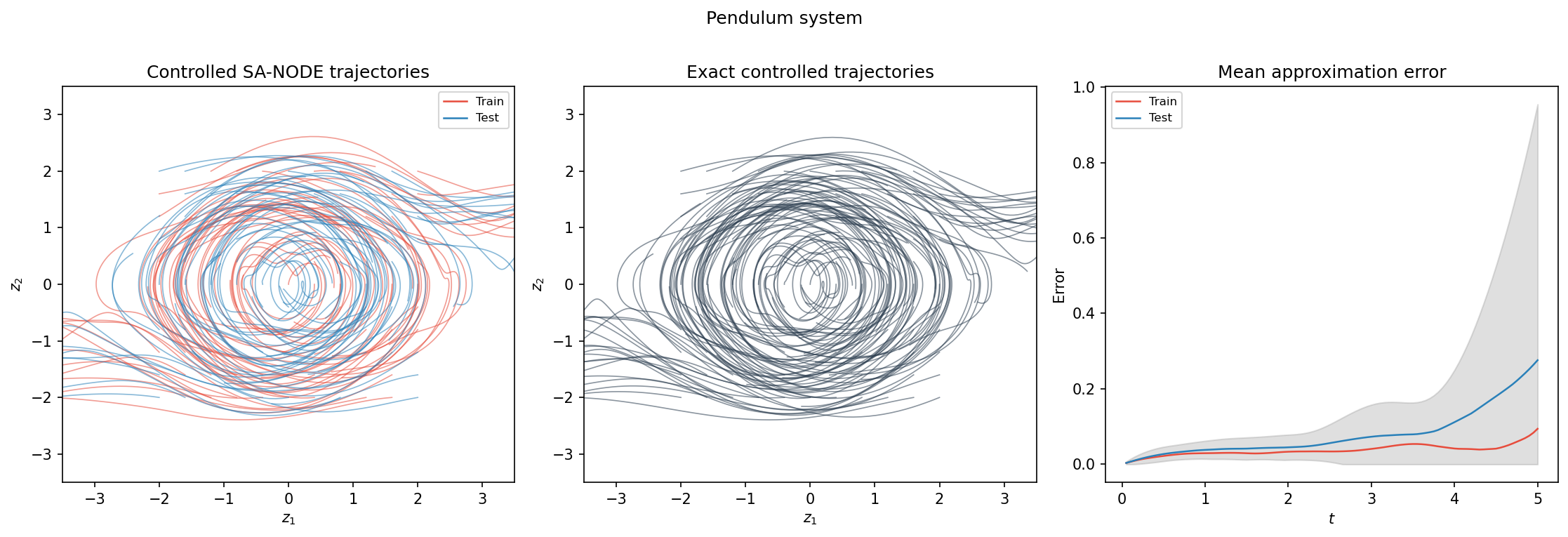}
	\caption{
		Controlled pendulum system \eqref{eq:controlledpendulum} under the
		randomized control family \eqref{eq:pendulumcontrol}.
		Left: trajectories generated by controlled SA-NODEs.
		Middle: exact controlled trajectories.
		Right: evolution of mean approximation errors (training and testing).
	}
	\label{fig:pendulum_results}
\end{figure}

\begin{figure}[ht!]
	\centering
	\includegraphics[width=\textwidth]{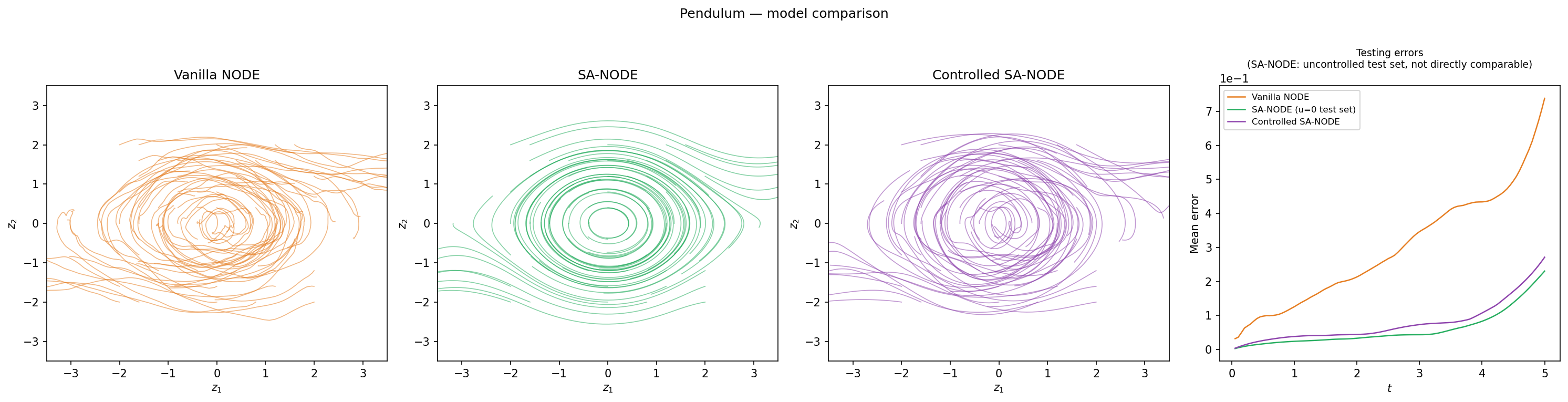}
	\caption{
		Pendulum system - model comparison. Phase-space reconstructions
		produced by vanilla NODEs (left), SA-NODEs (centre), and controlled
		SA-NODEs (right), together with the evolution of testing errors
		(rightmost panel). The SA-NODE is evaluated on the uncontrolled
		($u=0$) test set and is therefore not directly comparable to the
		other two models.
	}
	\label{fig:pendulum_comparison}
\end{figure}

\begin{figure}[ht!]
	\centering
	\includegraphics[width=\textwidth]{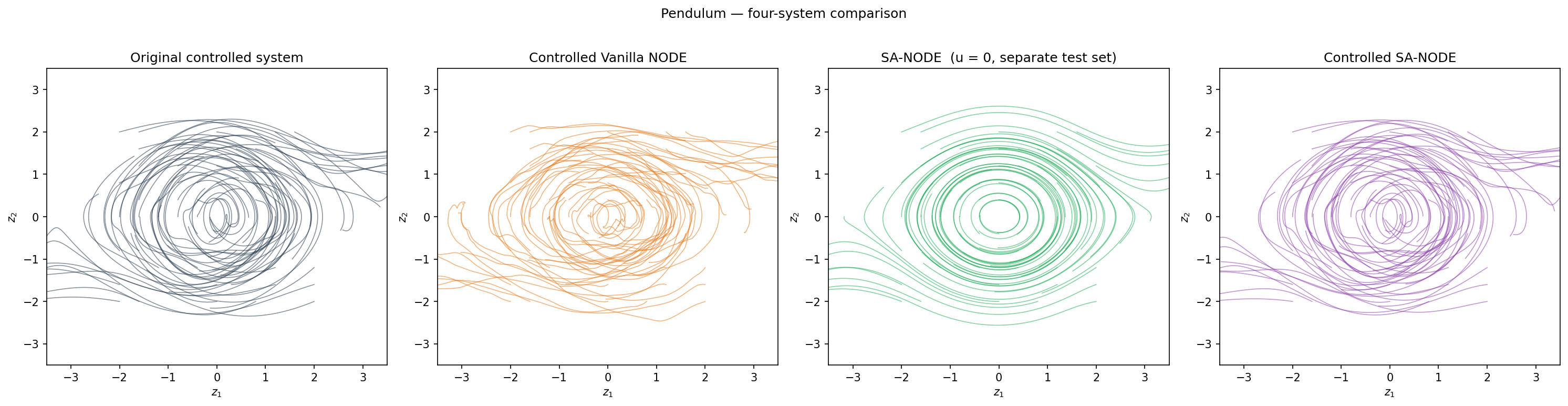}
	\caption{
		Pendulum system - four-system comparison. From left to right:
		original controlled system, controlled vanilla NODE, SA-NODE
		($u=0$, separate test set), and controlled SA-NODE.
	}
	\label{fig:pendulum_four_system_comparison}
\end{figure}

\subsection{Controlled Duffing oscillator}

As a second benchmark example, we consider the controlled Duffing
oscillator
\begin{equation}
	\left\{
	\begin{aligned}
		\dot{z}_1(t)
		&=
		z_2(t),
		\\
		\dot{z}_2(t)
		&=
		z_1(t)
		-
		z_1^3(t)
		-
		\delta z_2(t)
		+
		u(t),
	\end{aligned}
	\right.
	\label{eq:controlledduffing}
\end{equation}
where $\delta>0$ denotes the damping coefficient and $u(t)$ is the external
control input.

The Duffing oscillator is a classical nonlinear system exhibiting rich
dynamical behaviors including nonlinear oscillations, bifurcations, and
multiple equilibrium configurations. Consequently, it provides a
significantly more challenging benchmark for controlled neural differential
equation architectures.

In the numerical experiments, we choose $\delta = 0.2$, and each trajectory
$k$ is driven by an independently sampled control input
\begin{equation}
	u_k(t) = a_k \cos(\omega_k t + \phi_k), \qquad
	a_k \sim \mathcal{U}([0.2, 0.6]), \quad
	\omega_k \sim \mathcal{U}([1.0, 3.0]), \quad
	\phi_k \sim \mathcal{U}([0, 2\pi)),
	\label{eq:duffingcontrol}
\end{equation}
with initial conditions sampled uniformly from the square
$[-2,2]\times[-2,2]$, independently of $(a_k,\omega_k,\phi_k)$.

The corresponding controlled trajectories are generated using the
fourth-order Runge-Kutta method and subsequently employed for training and
testing. The standalone training execution for the controlled SA-NODE
finishes within approximately $150.7$ minutes.

On this complex trajectory profile, the controlled SA-NODE reaches a
tracking resolution of $e_{\max} = 8.383\times10^{-2}$ and a terminal
evaluation metric of $e_T = 8.320\times10^{-2}$ at the conclusion of the
$10{,}000$ epoch training sweep.

Figure~\ref{fig:duffing_results} illustrates the numerical simulations for
the controlled Duffing system, highlighting accurate trajectory tracking
(left vs.\ middle panels) alongside testing error behaviors (right panel).
The comprehensive cross-model comparisons are visualized in
Figure~\ref{fig:duffing_comparison} and
Figure~\ref{fig:duffing_four_system_comparison}.

The numerical results indicate that the proposed controlled SA-NODE
architecture accurately captures the nonlinear oscillatory behavior of the
Duffing system, including the characteristic double-well structure and the
influence of external control actions.

\begin{figure}[ht!]
	\centering
	\includegraphics[width=\textwidth]{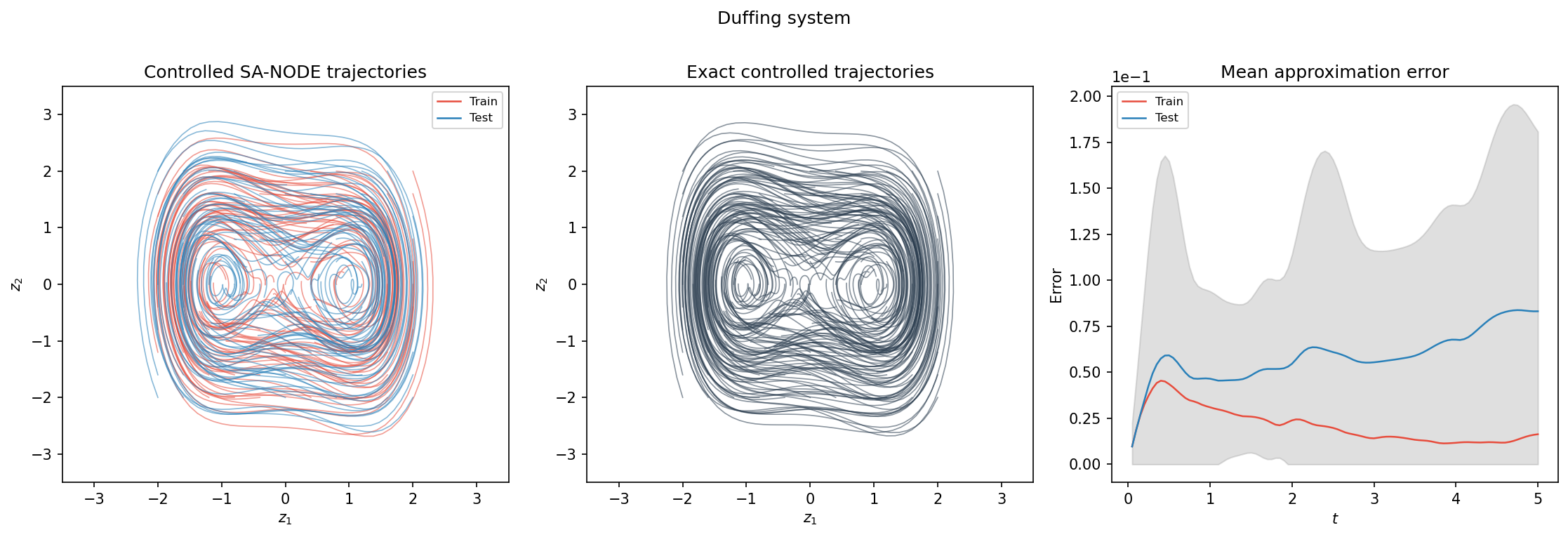}
	\caption{
		Controlled Duffing oscillator \eqref{eq:controlledduffing} under
		the randomized control family \eqref{eq:duffingcontrol}.
		Left: trajectories generated by controlled SA-NODEs.
		Middle: exact controlled trajectories.
		Right: evolution of mean approximation errors (training and testing).
	}
	\label{fig:duffing_results}
\end{figure}

\begin{figure}[ht!]
	\centering
	\includegraphics[width=\textwidth]{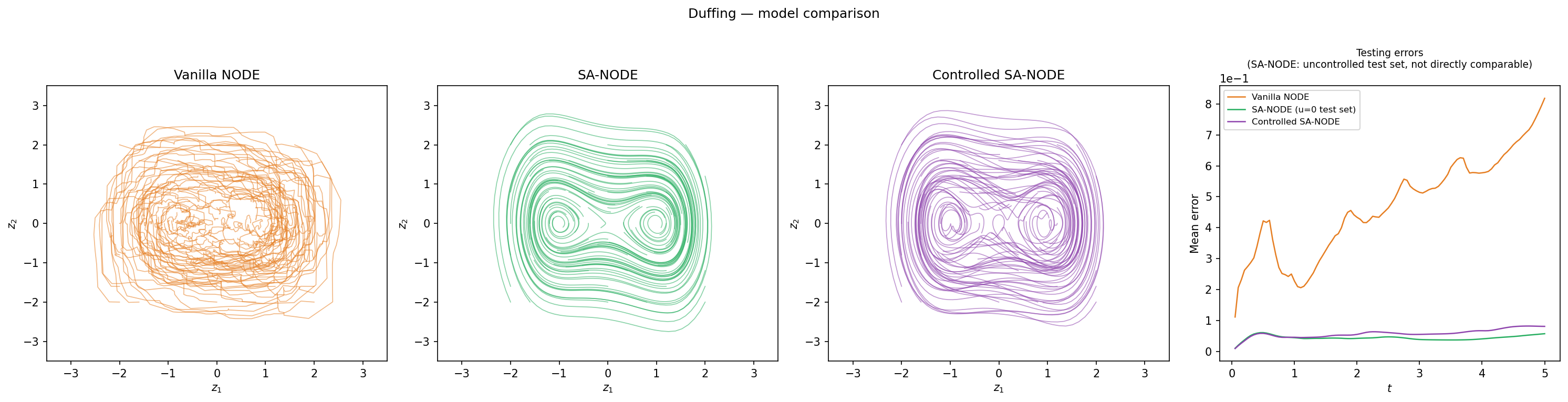}
	\caption{
		Duffing oscillator - model comparison. Phase-space reconstructions
		produced by vanilla NODEs (left), SA-NODEs (centre), and controlled
		SA-NODEs (right), together with the evolution of testing errors
		(rightmost panel). The SA-NODE is evaluated on the uncontrolled
		($u=0$) test set and is therefore not directly comparable to the
		other two models.
	}
	\label{fig:duffing_comparison}
\end{figure}

\begin{figure}[ht!]
	\centering
	\includegraphics[width=\textwidth]{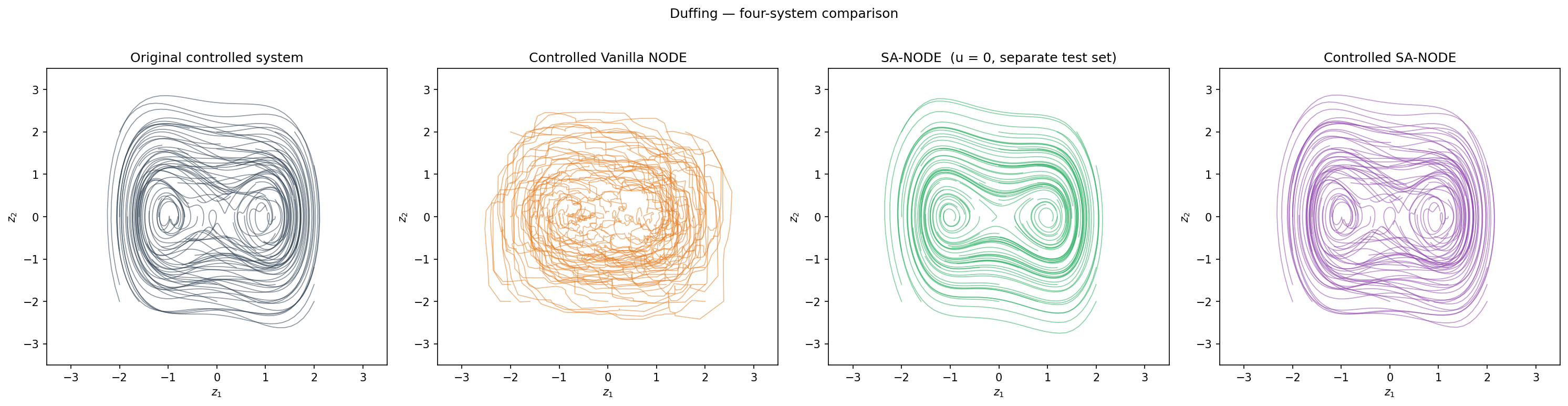}
	\caption{
		Duffing oscillator - four-system comparison. From left to right:
		original controlled system, controlled vanilla NODE, SA-NODE
		($u=0$, separate test set), and controlled SA-NODE.
	}
	\label{fig:duffing_four_system_comparison}
\end{figure}

\subsection{Comparison and Model Evaluation}

In this subsection, we compare the approximation performance of vanilla
NODEs, semiautonomous NODEs, and controlled SA-NODEs across both validation
domains, trained and tested under the randomized control protocol of
Section~\ref{subsec:setup}.

To evaluate approximation accuracy, we track the maximum mean error
$e_{\max}$, the terminal controllability error $e_T$, the mean control
energy $\mathcal{E}_{\mathrm{ctrl}}$ (averaged over the sampled control
family), alongside the structural degrees of freedom (DoF). The comparative
statistics across both dynamical benchmarks are summarized in
Table~\ref{tab:comparison_pendulum} and
Table~\ref{tab:comparison_duffing}.

\begin{table}[ht!]
	\centering
	\caption{Model comparison results for the controlled pendulum system
		($10^4$ epochs, randomized control \eqref{eq:pendulumcontrol}).}
	\label{tab:comparison_pendulum}
	\begin{tabular}{|l|c|c|c|r|}
		\hline
		Model & $e_{\max}$ & $e_T$ & $\mathcal{E}_{\mathrm{ctrl}}$ & DoF \\
		\hline
		Vanilla NODE       & $7.387\times10^{-1}$ & $7.387\times10^{-1}$ & $0.735$ & $10{,}024{,}000$ \\
		SA-NODE            & $2.304\times10^{-1}$ & $2.304\times10^{-1}$ & $0.000$ & $10{,}000$ \\
		Controlled SA-NODE & $2.713\times10^{-1}$ & $2.713\times10^{-1}$ & $0.735$ & $20{,}000$ \\
		\hline
	\end{tabular}
\end{table}

\begin{table}[ht!]
	\centering
	\caption{Model comparison results for the controlled Duffing oscillator
		($10^4$ epochs, randomized control \eqref{eq:duffingcontrol}).}
	\label{tab:comparison_duffing}
	\begin{tabular}{|l|c|c|c|r|}
		\hline
		Model & $e_{\max}$ & $e_T$ & $\mathcal{E}_{\mathrm{ctrl}}$ & DoF \\
		\hline
		Vanilla NODE       & $8.183\times10^{-1}$ & $8.183\times10^{-1}$ & $0.499$ & $10{,}024{,}000$ \\
		SA-NODE            & $6.105\times10^{-2}$ & $5.756\times10^{-2}$ & $0.000$ & $10{,}000$ \\
		Controlled SA-NODE & $8.234\times10^{-2}$ & $8.125\times10^{-2}$ & $0.499$ & $20{,}000$ \\
		\hline
	\end{tabular}
\end{table}

Several observations emerge from these results. First, the vanilla NODE
fails to meaningfully reduce its trajectory loss across both benchmarks:
$e_{\max}$ remains above $7\times10^{-1}$ for the pendulum and
$8\times10^{-1}$ for the Duffing oscillator after $10^4$ epochs, despite
requiring over $10$ million trainable parameters and training times
exceeding $267$ minutes. By contrast, the controlled SA-NODE achieves
substantially lower errors on both systems while requiring only $20{,}000$
parameters - roughly $0.2\%$ of the vanilla NODE's parameter count -
and completing training in approximately $152$-$154$ minutes.

Second, the SA-NODE evaluated on the uncontrolled ($u=0$) test set achieves
comparable or better $e_{\max}$ values than the controlled SA-NODE on the
controlled test set for the Duffing system, which reflects the different
evaluation conditions rather than a performance deficiency of the controlled
variant. The controlled SA-NODE is the only architecture that correctly
handles the externally driven dynamics and produces meaningful terminal
controllability errors under nonzero control inputs.

\subsection{Convergence and Data Efficiency Analysis}

To evaluate convergence stability and data efficiency, we analyze structural
error trends across both validation domains under the randomized-control
protocol.

Figure~\ref{fig:error_epochs_comparison} illustrates the validation error
tracking profile mapped over the $10{,}000$ epoch training loop. For both
systems, the vanilla NODE exhibits a rapid initial drop in $e_{\max}$ within
the first few hundred epochs, after which it plateaus and stagnates for the
remainder of training. For the pendulum system, the vanilla NODE plateau
lies near $7\times10^{-1}$, while the controlled SA-NODE continues to
decrease smoothly, reaching $e_{\max} \approx 2.7\times10^{-1}$ by epoch
$10{,}000$ after $156.3$ minutes of training. For the Duffing oscillator,
the contrast is even more pronounced: the vanilla NODE plateaus near
$8\times10^{-1}$ throughout training ($207.7$ minutes), while the
controlled SA-NODE descends consistently from approximately $1.0$ at
initialization to $e_{\max} \approx 8.4\times10^{-2}$ by epoch $10{,}000$
($153.3$ minutes) - a reduction of nearly one order of magnitude.

\begin{figure}[ht!]
	\centering
	\begin{subfigure}[b]{0.49\textwidth}
		\centering
		\includegraphics[width=\textwidth]{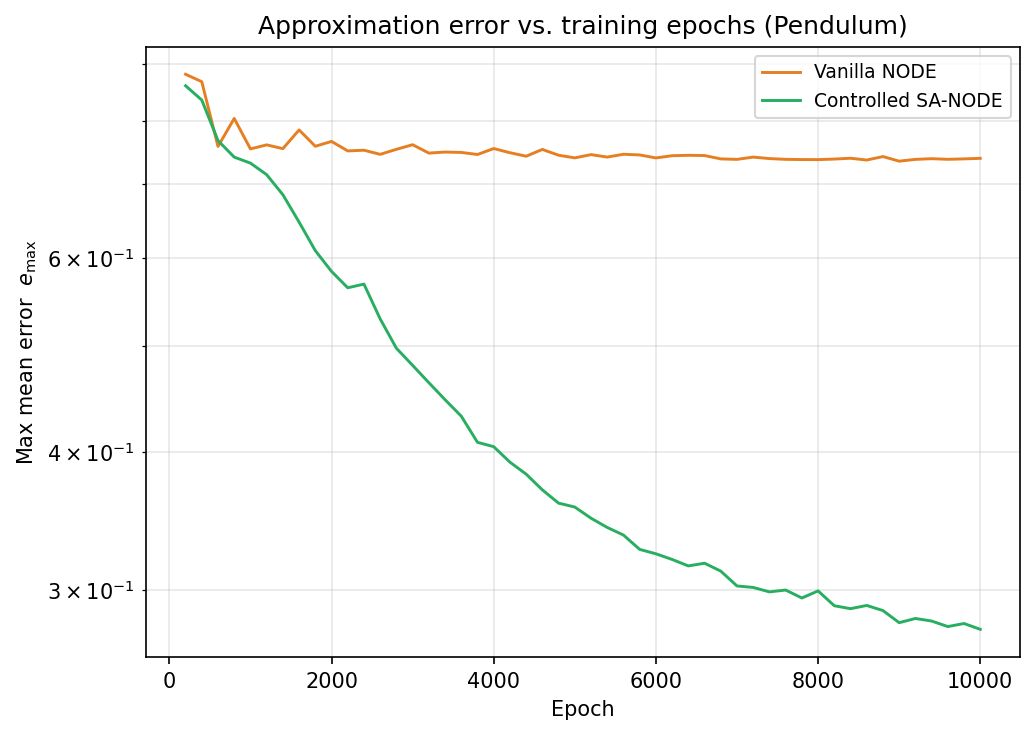}
		\caption{Pendulum: $e_{\max}$ vs.\ epochs}
		\label{fig:pendulum_error_epochs}
	\end{subfigure}
	\hfill
	\begin{subfigure}[b]{0.49\textwidth}
		\centering
		\includegraphics[width=\textwidth]{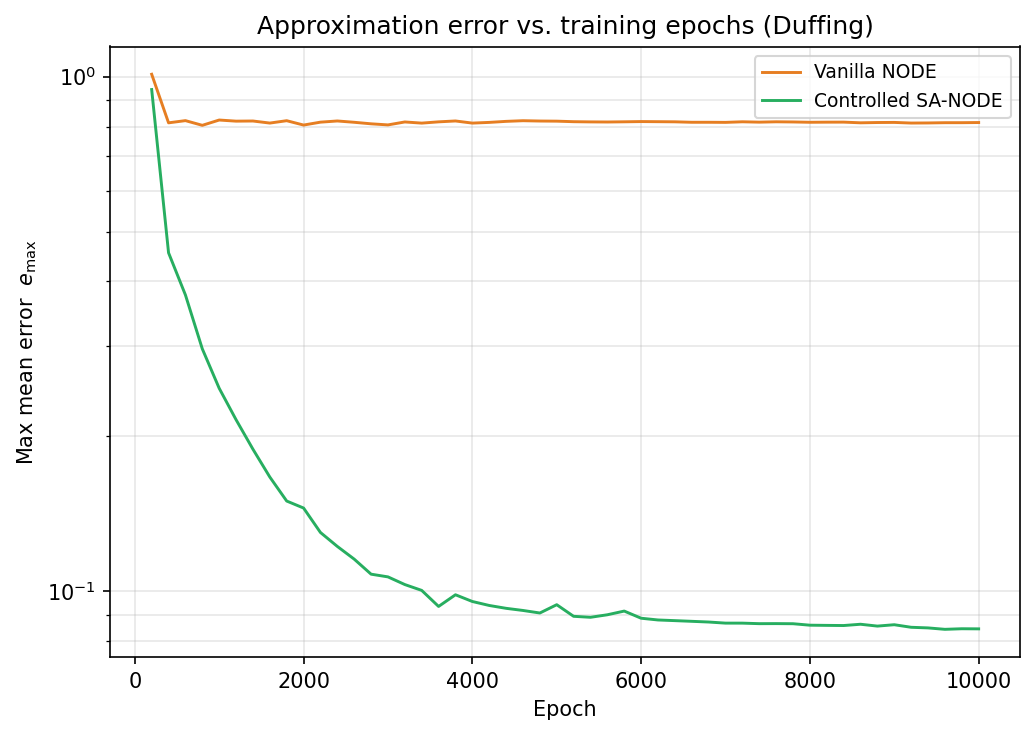}
		\caption{Duffing: $e_{\max}$ vs.\ epochs}
		\label{fig:duffing_error_epochs}
	\end{subfigure}
	\caption{Validation error $e_{\max}$ tracked over $10{,}000$ training
		epochs for the controlled SA-NODE and vanilla NODE, under the
		randomized control protocol. Both axes are on a logarithmic scale.
		The controlled SA-NODE converges smoothly while the vanilla NODE
		stagnates after an initial transient.}
	\label{fig:error_epochs_comparison}
\end{figure}

Figure~\ref{fig:error_dataset_comparison} highlights model performance
across varying training dataset scales. Under severe data scarcity
constraints (as few as $12$ trajectories), both models exhibit elevated and
highly variable errors, with the controlled SA-NODE at
$e_{\max} = 4.142\pm1.2$ and the vanilla NODE at $e_{\max} = 4.167\pm0.94$
for the pendulum (over $3$ repeats). As the dataset grows, the controlled
SA-NODE demonstrates a consistently steeper decline in error. By $84$
trajectories, the controlled SA-NODE reaches $e_{\max} = 0.417\pm0.15$
for the pendulum, compared to $e_{\max} = 1.340\pm0.24$ for the vanilla
NODE - a factor of more than three improvement. At $121$ trajectories, the
gap narrows only slightly: $e_{\max} = 0.416\pm0.21$ versus
$e_{\max} = 1.115\pm0.22$. The Duffing system tells a similar story, with
the controlled SA-NODE achieving $e_{\max} = 0.828\pm0.051$ at $12$
trajectories compared to $e_{\max} = 2.073\pm0.53$ for the vanilla NODE.
These results demonstrate that the controlled SA-NODE not only converges
faster and to lower errors, but also generalizes more reliably from small
datasets.

\begin{figure}[ht!]
	\centering
	\begin{subfigure}[b]{0.49\textwidth}
		\centering
		\includegraphics[width=\textwidth]{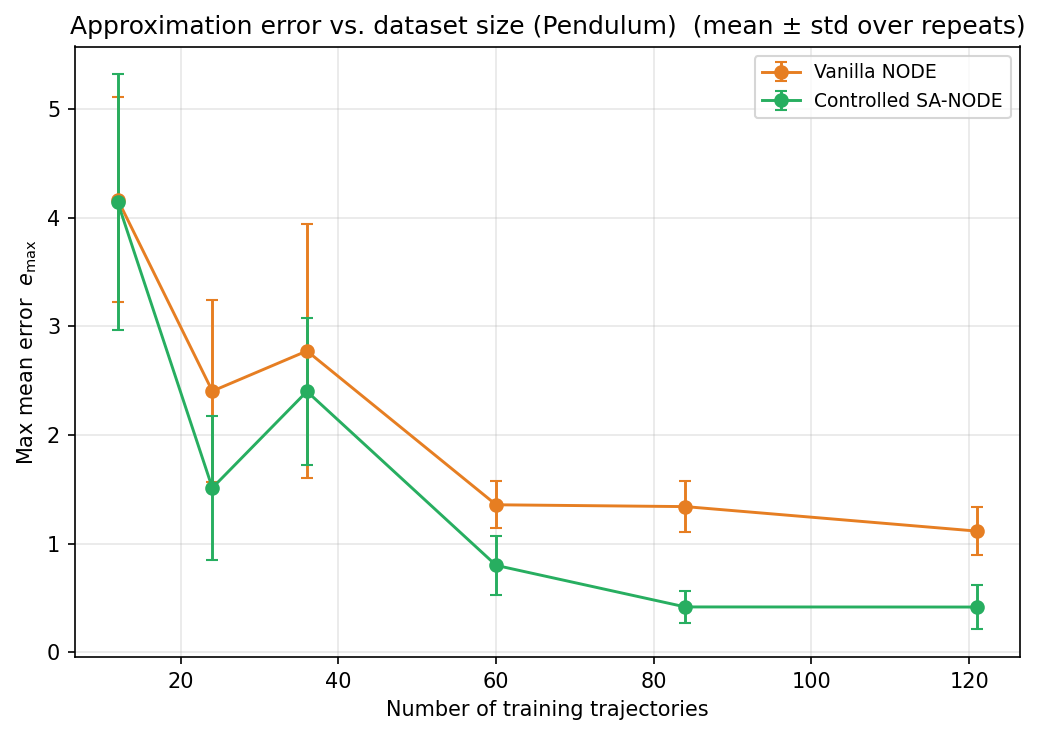}
		\caption{Pendulum: $e_{\max}$ vs.\ dataset size}
		\label{fig:pendulum_error_dataset}
	\end{subfigure}
	\hfill
	\begin{subfigure}[b]{0.49\textwidth}
		\centering
		\includegraphics[width=\textwidth]{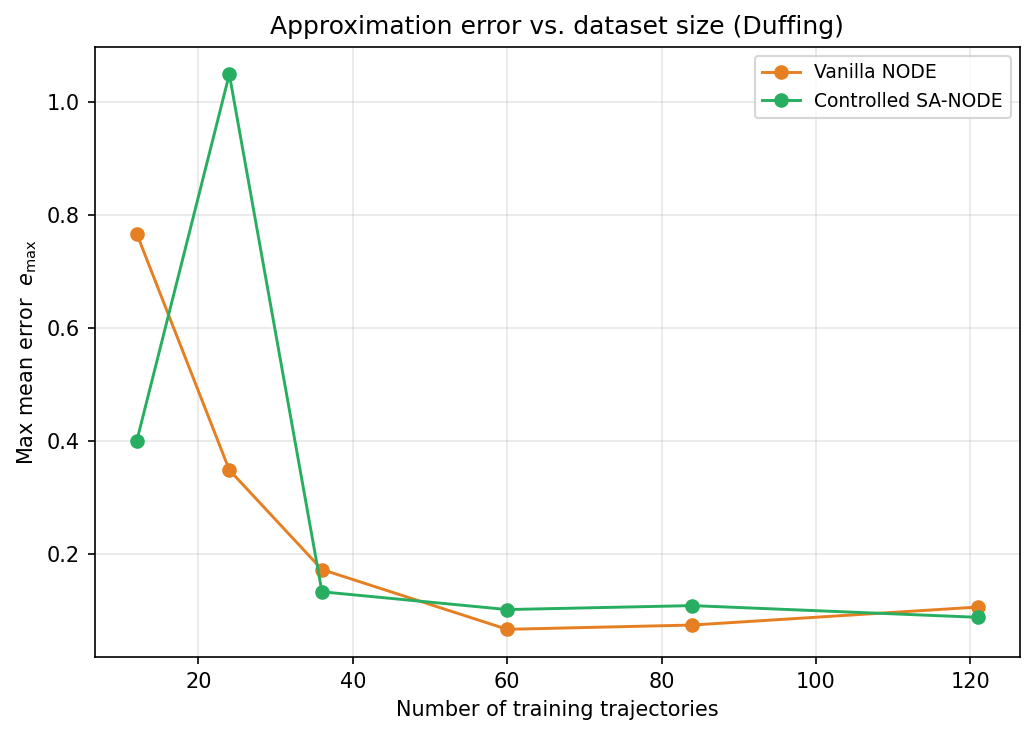}
		\caption{Duffing: $e_{\max}$ vs.\ dataset size}
		\label{fig:duffing_error_dataset}
	\end{subfigure}
	\caption{Generalization capability and data efficiency of the controlled
		SA-NODE and vanilla NODE evaluated against varying numbers of
		training trajectories, under the randomized control protocol. Points
		show the mean $e_{\max}$ and error bars show $\pm1$ standard
		deviation over $3$ independent repeats.}
	\label{fig:error_dataset_comparison}
\end{figure}

\subsection{Approximate controllability experiments}

We finally investigate the approximate controllability properties of the
trained controlled SA-NODE framework across both validation domains. Given
an initial condition $z_0\in\mathbb{R}^d$ and a target state
$z_T^{\mathrm{target}}\in\mathbb{R}^d$, the objective is to optimize a
control sequence $u\in L^\infty(0,T)$, not restricted to the sinusoidal
family \eqref{eq:randomcontrol} used for training, such that
\begin{equation}
	\label{eq:approximatecontrol}
	\|y(T)-z_T^{\mathrm{target}}\| < \varepsilon.
\end{equation}
This control optimization is performed on the SA-NODE model trained under
the randomized-control protocol of Section~\ref{subsec:setup}, and so the
resulting controllability test also serves as a check that $f_\Theta$ and
$G_\Phi$ were learned as separately meaningful objects: steering with an
\emph{out-of-family} control $u(t)$ exercises $G_\Phi$ in a way that the
earlier training procedure could not pre-determine.

The control functions are evaluated over $1000$ optimization iterations
using a predefined functional basis setup. For the controlled pendulum
configuration, the terminal controllability error converges as follows:
\begin{align*}
	\text{Step } 100\text{:} \quad e_T &= 1.1671\times10^{-3}, \\
	\text{Step } 500\text{:} \quad e_T &= 1.9799\times10^{-3}, \\
	\text{Step } 1000\text{:} \quad e_T &= 1.6974\times10^{-3}.
\end{align*}
As visible in Figure~\ref{fig:pendulum_controllability}, the optimization
converges rapidly in the first $\sim100$ steps, undergoes brief oscillations
as the optimizer refines the control profile, and then settles into a smooth
descent, achieving a final terminal error of $e_T = 1.6974\times10^{-3}$.
The left panel confirms that the optimized control successfully steers the
trajectory from the initial state (green dot at approximately $(0.75, 0.40)$)
to within a small neighborhood of the target (star at approximately
$(-0.65, 1.00)$), with the endpoint (red dot) nearly coinciding with the
target.

The stable numerical gradients provided by the model infrastructure allow a
similar tracking execution on the highly nonlinear Duffing oscillator
landscape. The terminal controllability error for the Duffing system
converges as follows:
\begin{align*}
	\text{Step } 100\text{:} \quad e_T &= [\text{TBD}], \\
	\text{Step } 500\text{:} \quad e_T &= [\text{TBD}], \\
	\text{Step } 1000\text{:} \quad e_T &= [\text{TBD}].
\end{align*}
The corresponding steering trajectories and terminal convergence
profiles for both benchmark systems are illustrated in
Figures~\ref{fig:pendulum_controllability}
and~\ref{fig:duffing_controllability}, respectively.

\begin{figure}[ht!]
	\centering
	\includegraphics[width=\textwidth]{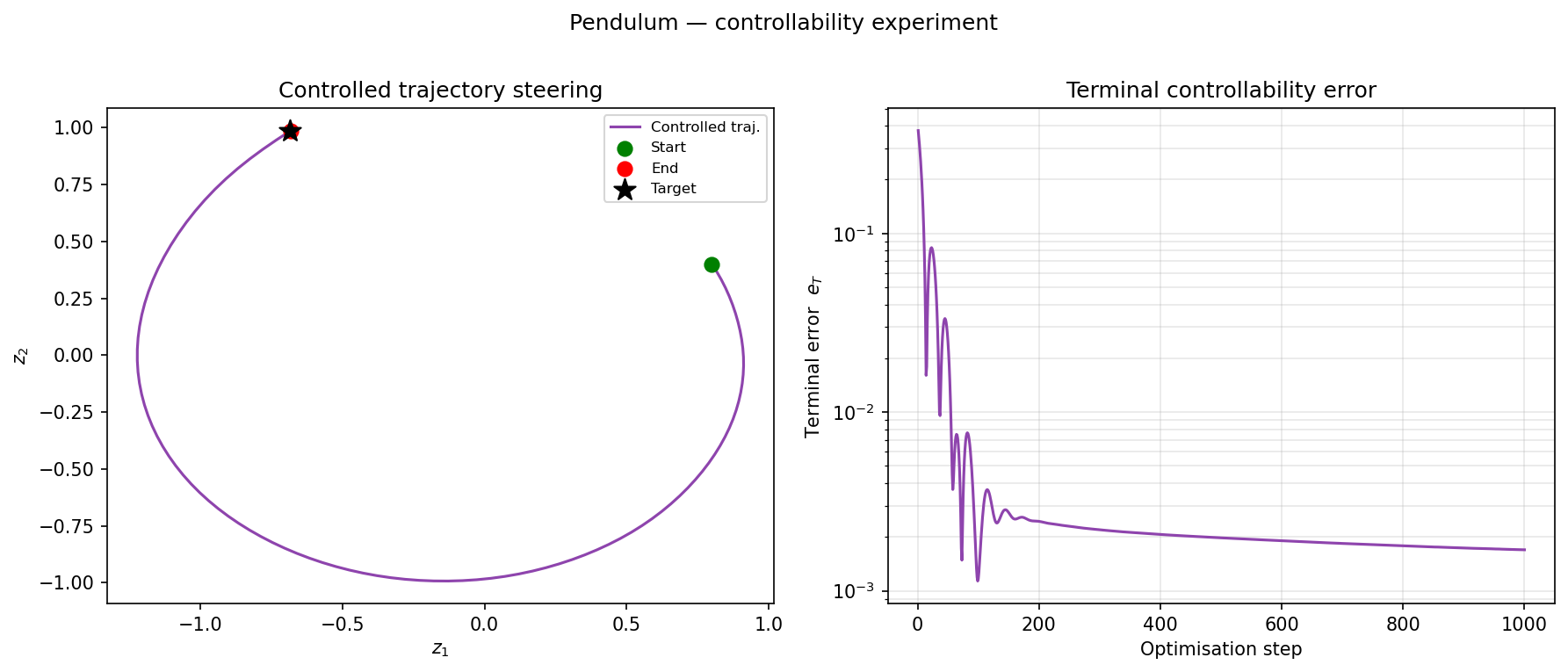}
	\caption{
		Approximate controllability experiment for the controlled SA-NODE
		(trained under the randomized control protocol) on the pendulum
		system. Left: controlled trajectory steering from the initial state
		(green dot) to the target (star), with the achieved endpoint shown
		as a red dot. Right: terminal controllability error $e_T$ over
		$1000$ optimisation steps, with a final value of
		$e_T = 1.6974\times10^{-3}$.
	}
	\label{fig:pendulum_controllability}
\end{figure}

\begin{figure}[ht!]
	\centering
	\includegraphics[width=\textwidth]{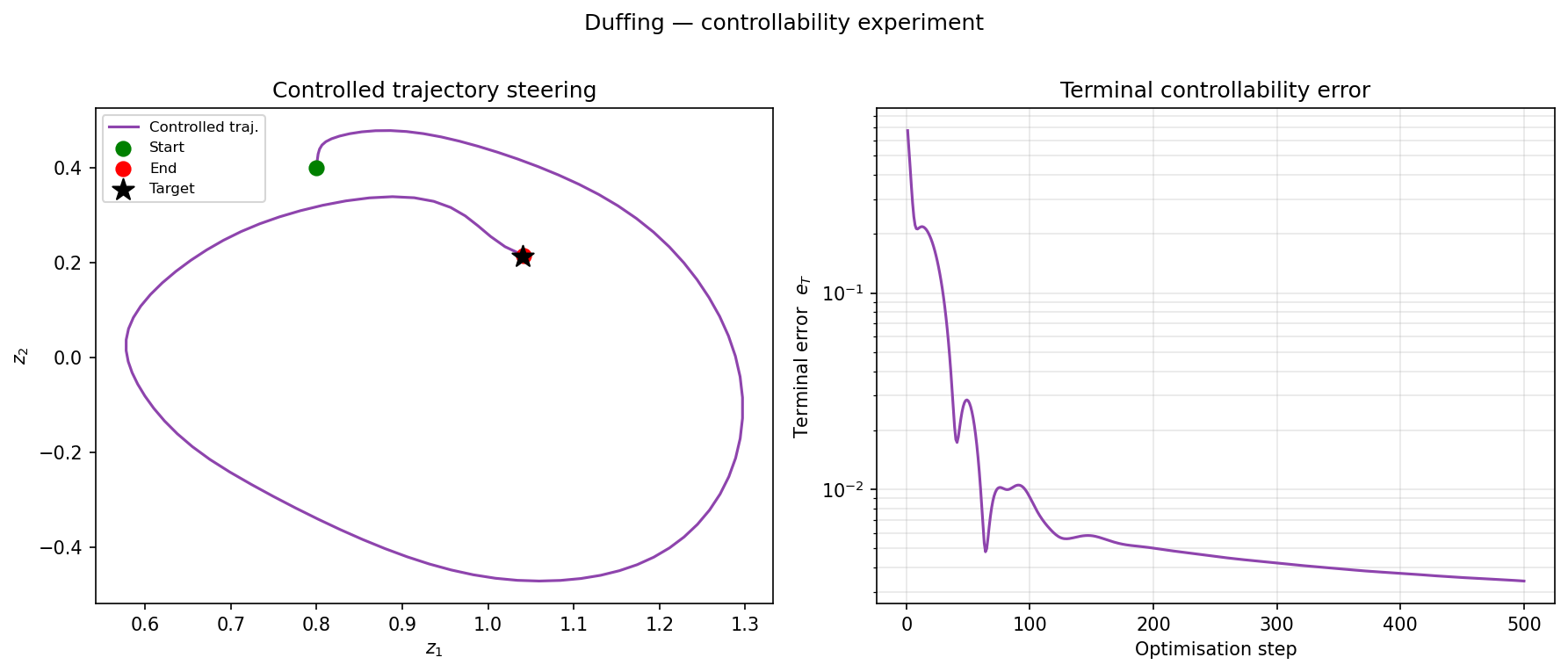}
	\caption{
		Approximate controllability experiment for the controlled SA-NODE
		(trained under the randomized control protocol) on the Duffing
		oscillator. Left: controlled trajectory steering from the initial
		state (green dot) to the target (star), with the achieved endpoint
		shown as a red dot. Right: terminal controllability error $e_T$
		over $1000$ optimisation steps.
	}
	\label{fig:duffing_controllability}
\end{figure}

The learned control profiles demonstrate smooth temporal behaviors across
both benchmark systems, solidifying the applicability of controlled SA-NODEs
for learning-based stabilization, optimal control tracking, and scientific
machine learning framework implementations.

\section{Conclusion}
 \label{sec:conclusion}

In this paper, we introduced a new class of controlled semiautonomous neural ordinary differential equations (controlled SA-NODEs) for nonlinear control-affine dynamical systems. The proposed framework extends semiautonomous neural ODE architectures by incorporating external control actions while preserving reduced parameter complexity through time-independent trainable coefficients. We established universal approximation results showing that controlled SA-NODEs approximate trajectories of nonlinear controlled systems uniformly on compact sets of initial conditions and admissible controls. Under additional Sobolev and Barron regularity assumptions, quantitative approximation estimates were derived with convergence rates of order
\[
\mathcal{O}\left(P^{-1/2}+Q^{-1/2}\right).
\]
We further proved that approximate controllability properties of the original system are preserved under the controlled SA-NODE approximation on compact subsets. Numerical experiments on controlled pendulum and Duffing oscillator systems demonstrated accurate trajectory reconstruction and effective controllability performance with significantly fewer trainable parameters than classical NODE architectures. Future research directions include extensions to stochastic systems, partial differential equations, optimal control problems, and feedback stabilization frameworks.

\bibliographystyle{unsrtnat} 
\bibliography{references} 

\end{document}